\newcounter{braid}
\newcounter{strands}
\DeclareMathAlphabet{\bsf}{OT1}{cmss}{bx}{n}
\def\cross{%
  \@ifnextchar^{\message{Got sup}\cross@sup}{\cross@sub}}
\def\cross@sup^#1_#2{\render@cross{#2}{#1}}
\def\cross@sub_#1{\@ifnextchar^{\cross@@sub{#1}}{\render@cross{#1}{1}}}
\def\cross@@sub#1^#2{\render@cross{#1}{#2}}
\def\render@cross#1#2{
  \def\strand{#1}
  \def\crossing{#2}
  \pgfmathsetmacro{\cross@y}{-\value{braid}*\braid@h}
  \pgfmathtruncatemacro{\nextstrand}{#1+1}
  \foreach \thread in {1,...,\value{strands}}
  {
    \pgfmathsetmacro{\strand@x}{\thread * \braid@w}
    \ifnum\thread=\strand
    \pgfmathsetmacro{\over@x}{\strand * \braid@w + .5*(1 - \crossing) * \braid@w}
    \pgfmathsetmacro{\under@x}{\strand * \braid@w + .5*(1 + \crossing) * \braid@w}
    \draw[braid] \pgfkeysvalueof{/tikz/braid start} +(\under@x pt,\cross@y pt) to[out=-90,in=90] +(\over@x pt,\cross@y pt -\braid@h);
    \draw[braid] \pgfkeysvalueof{/tikz/braid start} +(\over@x pt,\cross@y pt) to[out=-90,in=90] +(\under@x pt,\cross@y pt -\braid@h);
    \else
    \ifnum\thread=\nextstrand
    \else
     \draw[braid] \pgfkeysvalueof{/tikz/braid start} ++(\strand@x pt,\cross@y pt) -- ++(0,-\braid@h);
    \fi
   \fi
  }
  \stepcounter{braid}
}
\tikzset{braid/.style={double=\pgfkeysvalueof{/tikz/braid colour},double distance=1pt,line width=2pt,white}}
\newcommand{\braid}[2][]{%
  \begingroup
  \pgfkeys{/tikz/strands=2}
  \tikzset{#1}
  \pgfkeysgetvalue{/tikz/braid width}{\braid@w}
  \pgfkeysgetvalue{/tikz/braid height}{\braid@h}
  \setcounter{braid}{0}
  \let\sigma=\cross
  #2
  \endgroup
}
\newtheorem{theorem}{Theorem}
\newtheorem{proposition}[theorem]{Proposition}
\newtheorem{comment}[theorem]{Comment}
\newtheorem{lemma}[theorem]{Lemma}
\newtheorem{definition}[theorem]{Definition}
\def\Z{\mathbb{Z}}
\def\N{\mathbb{N}}
\def\md{\mathcal{D}}
\def\qed{\hfill$\square$\medskip}
\def\Zpk{\mathbb{Z}/p^{k}}
\def\Zpk1{\mathbb{Z}/p^{k-1}}
\newcommand{\rref}[1]{(\ref{#1})}
\newcommand{\beg}[2]{\begin{equation}\label{#1}#2\end{equation}}
\def\r{\rightarrow}
\def\sl2{\widetilde{SL_{2}(\Z)}}
\def\md
\def\rank{\operatorname{rank}}
\title[]{Actads}
\author{Sophie Kriz}
\begin{document}
\maketitle

\begin{abstract}
In this paper, I introduce a new generalization of the concept of an operad, further generalizing the concept of an {\em opetope} 
introduced by Baez and Dolan, who used this for the definition of their version of non-strict $n$-categories. 
Opetopes arise from iterating a certain construction on operads called the $+$-construction, starting with monoids. The
first step gives rise to plain operads, i.e. operads without symmetries. The permutation axiom in a symmetric operad, however, is an additional
structure resulting from permutation of variables, independent of the structure of a monoid.
Even though we can apply the $+$-construction to symmetric operads, there is the possibility of introducing a completely different kind of permutations
on the higher levels by again permuting variables without regard to the structure on the previous levels. 
Defining and investigating these structures is the main purpose of this paper.
The structures obtained in this way is what I call {\em $n$-actads}.
In $n$-actads with $n>1$, the permutations on the different levels
give rise to a certain special kind of $n$-fold category. I also explore the concept
of iterated algebras over an $n$-actad (generalizing an algebra and module over an operad), and various types of iterated units.
I give some examples of algebras over $2$-actads, and show how they can be used to construct certain
new interesting homotopy types of operads. I also discuss a connection between actads and ordinal notation. 

\end{abstract}

\newpage

\vspace*{10mm}
\tableofcontents
\newpage

\section{Introduction}

\vspace{5mm}

{\em Operads} are a central concept of modern algebraic topology. The term was coined by J. P. May \cite{MayIterated}. Operads are used as
an approach to infinite loop space theory. 
They are a more streamlined alternative to the PROPs of Adams and MacLane, Lawvere theories, and the concepts of
Boardman and Vogt.
An operad $\mathscr{C}$ consists of sets (or spaces) $\mathscr{C}(n)$ where $\mathscr{C}(n)$
is the set of all operations in $n$ variables in a type of algebraic structure, which it describes. 
The basic operations of an operad correspond to substituting
the results of operations (in different variables) to the input variables of another operation. One also typically includes a unit (identity)
operation and commutativity in the broader sense, which means a symmetric group action by permuting variables.
All this structure is included in May's original definition of an {\em operad} (\cite{MayIterated}, Definition 1.1, which also included the assumption that
$\mathscr{C}(0)=*$).
The concept without commutativity was defined by May (\cite{MayIterated}, Definition 3.12) as {\em non-$\Sigma$ operads}.
Some authors also speak of {\em plain operads}. In contrast, operads where permuting variables is allowed are
sometimes called {\em symmetric operads}.
Another variant is an {\em $S$-sorted operad} (or {\em multicategory}), 
which is defined in the same way as an operad, but with an ``object set" $S$.

In the original context of infinite loop space theory, May \cite{MayIterated} used the {\em little $n$-cube operad} which naturally acts on an
$n$-fold loop space. By passing to the limit, an infinite loop space is shown to be an {\em $E_{\infty}$-space}, which means an algebra over an operad $\mathscr{C}$ where each $\mathscr{C}(n)$ is a contractible space
with free $\Sigma_n$-action. This is one way of capturing the notion of a commutative monoid ``up to all possible
homotopies."

In a completely different context, operads also later appeared in algebra, where instead of operations on a set or a space, we axiomatize multilinear operations on a vector space. In this fashion, for example, commutative and associative algebras, as well as Lie and Poisson algebras can be
axiomatized. Ginzburg and Kapranov (\cite{GinzKapronav}) discovered a striking phenomenon of {\em Koszul duality} of operads and their algebras, analogous to previously known concept of Koszul duality of algebras and modules \cite{Priddy}.

Passing from vector spaces to their chain complexes, we can also talk about chain-level $E_{\infty}$-operads and their algebras. Hinich and
Schechtman \cite{HS} noticed that the cochain complex of a topological space has the structure of an $E_{\infty}$-algebra (which they called
a {\em May algebra}). Mandell \cite{p-adic} proved that for well-behaved spaces (for example simply connected with finitely generated homotopy groups), their $p$-completed homotopy type can be recovered from their $E_{\infty}$ cochain complex with coefficients in $\bar{\mathbb{F}}_p$.

In some sense, the ideas of the topological and algebraic contexts are combined in {\em multiplicative infinite loop space theory} (cf. \cite{MayMultiplicativeA, EM}), which generalizes
certain structures of multilinear algebra to stable homotopy theory, i.e. to the context of spectra (see \cite{spectrum}, Part III).

\vspace{5mm}

Baez and Dolan (see \cite{BaezDolan}) introduced the {\em $+$-construction} (unrelated to Quillen's $+$-construction), which is a procedure used to pass from monoids to plain operads. 
Essentially, for an
$S$-sorted operad $\mathscr{O}$, there is an operad $\mathscr{O}^+$ sorted over the set of operations of $\mathscr{O}$, whose algebras are
operads over $\mathscr{O}$. Applying this procedure to monoids defines plain operads, and iterating defines {\em opetopes}, which Baez and Dolan
\cite{BaezDolan} used to axiomatize non-strict $n$-categories.
Variants and generalizations of these concepts were introduced by C. Hermida, M. Makkai, and J. Power \cite{HermMakPow1, HermMakPow2, HermMakPow3}, E. Cheng \cite{ChengOpet, ChengWeak, ChengWeakOpes}, and T. Palm \cite{PalmBook, PalmPaper} . 
Additional references include M. Zawadowski \cite{ZFace, ZFacePositive, ZLax} and M. Fiore \cite{MFioreSaville}.

The $+$-construction was further generalized to Cartesian monads by Leinster \cite{Leinster}, and interpreted 
combinatorially by Kock, Joyal, Batanin, and
Mascari \cite{KockJoyalBatMascari}, using the calculus of polynomial functors of Gambino and Hyland \cite{GambinoHyland}. 
An even more conceptual interpretation in this direction was given by Szawiel and Zawadowski \cite{SzawielZawadowskiTheory, SzawielZawadowskiWeb,SzawielZawadowski}. For another approach using syntactic methods, see Curien, Thanh, and Mimram \cite{CurienThanhMimram}. Computer implementations of polytopes are given in \cite{KockJoyalBatMascari, Finster}.

However, opetopes do not explain how symmetric
operads arise from monoids. While the $+$-construction can be applied to symmetric operads, there is an additional possibility of introducing
permutations on each higher level by permuting variables without regard to the structures on the previous levels. 
The
interplay of these permutations is the concept I explore in this paper. I call the resulting structures {\em actads}.
Note that the possibility of permuting variables randomly does not fit within the picture of any reasonable concept of a higher category: it is a truly new element. Perhaps for that reason, actads have so far defied the conceptual approaches described above, and the only rigorous definition I was able to make starts from scratch.

As it turns out, the higher permutations must be handled quite carefully. We must specify a delicate order of variables on the previous levels
which must be preserved for the higher permutations to work. Permutations on the different levels cannot just be mixed into an ordinary category,
but must form a special kind of $n$-fold category. Because of this, a particularly canonical model of the structure without permutations
(which I call {\em plain actads}) must be introduced. Plain $n$-actads form a category equivalent to a version of $n$-opetopes (\cite{BaezDolan}),
a combinatorial model of which was given in \cite{KockJoyalBatMascari}. However, we need an ordered model which will be introduced from scratch.

In this paper, I index the actads so that monoids are $0$-actads and operads
are $1$-actads. The operations of an actad are indexed by an {\em $n$-base}.
The $0$-base is just $1$ point, the $1$-base is the set  of natural numbers. Elements of the $2$-base are,
roughly speaking, based trees. Drawing, in the place of each node of a tree, a triangle whose vertex is the node,
and whose base contains the successor nodes, one can visualize a $3$-base as a tree drawn in this fashion, where each triangle
can be, recursively (but only finitely many times), subdivided into a tree drawn in a similar fashion.
Remembering all these subdivisions is a key part of the structure of an element of the $3$-base.
For $n>3$, $n$-bases are more difficult to visualize.

To explain how the induction procedure works in more detail, I find it easier to first only
discuss associativity.
I call the concept only encoding associativity a {\em plain actad}.
Plain actads are not categories, but merely sets. I define a plain $n$-actad using an indexing set $B_n$ called the {\em plain $n$-base}.
For example, the elements of $B_2$ are {\em planar trees},
which means that the successors to each node are linearly ordered.
In addition to the $B_n$'s, there are maps $F_n$, which map $B_n$ into the set of non-empty finite
sequences in $B_{n-1}$, and $G_n$, which simply maps $B_n$ into $B_{n-1}$. 
Now, the $F_n$'s can be thought of as assigning to an element of $B_n$ the ``set of its entries."
For example, $F_2$ can be visualized as giving the degrees of internal nodes of a planar tree.
In addition, $G_n$ can be thought of as assigning to an element of $B_n$ its ``total arity", which is an element of $B_{n-1}$. Finally, we can define a composition operation by taking an element $x\in B_n$ and replacing one of its ``entries" (i.e. one of the terms $z$ of $F_n(x)$) with some $y\in B_n$ such that $G_n(y)=z$. For example, we begin with $B_0 = \{*\}$ and $B_1 = \mathbb{N}$ (where we identify a natural number $n$ with a planar
tree with $1$ triangle and $n$ prongs). Composition of $n,m \in \mathbb{N}=B_1$ is then defined to always give $n+m-1$
and can be visualized in the diagram \rref{PlainB1Composition}, below. Note that we always have
$F_1 (n) = (\underbrace{*, \dots, *}_n)$ and $G_1(n) = *$ for every $n$, so all $n,m\in B_1$
are composable in all ways.

Now, a plain $n$-actad is a system $\mathcal{C}(z)$ indexed by $z\in B_n$. The structure of a plain $n$-actad is
defined by composition operations in $\mathcal{C}$ ``fibered over $B_n$", i.e. where the indices are elements involved in  a composition in $B_n$. Finally, we can derive the associativity axioms from associativity properties of the composition in $B_n$.

To advance the induction, I define $B_{n+1}$ as the free plain $n$-actad on $B_n$. In other words, we no longer ``execute" the composition operations on $B_n$, but simply keep track of which ones were made, subject to the associativity axiom. In this fashion, we can inductively define the $B_n$'s, and thus we can define plain $n$-actads. 
I treat plain $n$-actads below in Section \ref{s1}.

Just as in the case of operads, capturing a commutativity axiom complicates the structure substantially. In the case of an operad $\mathscr{C}$, a commutativity property is expressed by introducing a $\Sigma_n$-action on $\mathscr{C}(n)$. Then this turns $B_1$ into a groupoid $\mathscr{B}_1$,
making $\Sigma_n$ the automorphism group of $n\in\N$. From a philosophical point of view, $n$ really stands for the word
\beg{wordan}{a^n=a\cdots a}
in the free monoid on one element $a$, and the permutations interchange (independently of the monoid structure) the $n$ copies of $a$. 
In this paper, I develop the corresponding additional structure $\mathscr{B}_2$, and eventually $\mathscr{B}_n$.
Recall that the elements of $B_2$ are planar trees. $\mathscr{B}_2$ should incorporate the ``$1$-permutations" induced from $\mathscr{B}_1$ by
\rref{wordan}, which are isomorphisms of trees. Notice that a $1$-permutation can change the tree, so it may no longer be an automorphism, although it
is always an isomorphism (so $1$-permutations form a groupoid).
For example, see the picture of a $1$-permutation in $\mathscr{B}_2$ in \rref{1permutationof2trees}.
But now it seems that any natural generalization should introduce permutations of nodes of the tree of the same arity (call them $2$-permutations), regardless of the structure of the
tree (just as in \rref{wordan}, the permutations are completely independent to the product operation of the
free monoid). For example, see \rref{2permutationof2trees}.

We need to answer the question of how the $1$-permutations and $2$-permutations interact. Putting them both
together into a single category seems to fail. In other words, a random permutation of nodes of the same arity in a tree does not seem to induce,
in any meaningful way, a permutation on leaves. Nor does there really seem to be a natural structure of a $2$-category: Both $1$-permutations
and $2$-permutations act on objects. Thus, we are led to a bicategory structure (see \cite{Bicategory}), where the $\{1,2\}$-morphisms are permutations
of nodes of equal arity, on which we have prescribed the same permutation of successors. This, fortunately, is a rather special type of bicategory. In addition to the $\{1\}$-morphisms and $\{2\}$-morphisms (we may in the future omit the braces to simplify notation) forming groupoids,
every $1$-morphism and $2$-morphism with the same source complete to a unique $\{1,2\}$-morphism. In this paper, I call this type of bicategory
{\em cube-like}, and generalize it to all $n$. Then, $\mathscr{B}_n$ is, in particular, a cube-like $n$-fold category. Along with the appropriate axioms expressing compatibility with composition, this is my implementation of the commutativity axiom. Cube-like $n$-fold
categories fibered over $\mathscr{B}_n$ with composition satisfying the properties of $\mathscr{B}_n$'s composition are what I call {\em $n$-actads}.
I treat this in Section \ref{s2}.

The question of units is another issue. There is, of course, a natural concept of unit that can be considered a part of associativity, which generates two
unit axioms. However, there is more to the story. Even in operads (which, recall, are $1$-actads), we encounter another concept, namely that of a {\em based operad}, which also contains a
manifestation of the unit of a monoid (which is a $0$-actad). More precisely, a based operad $\mathscr{C}=(\mathscr{C}(n))$ is indexed by $n\in\N_0$, and we are given a base point $*\in \mathscr{C}(0)$. Based operads play a crucial role in infinite loop space theory (for example, in the Approximation Theorem \cite{MayIterated}).

Similarly, for $n$-actads, beside the ``ordinary" unit notion there is a notion which includes carry-overs from $k$-actad units for $k<n$. I call this
``recursive" unit axiom {\em R-unitality}. It is interesting to note that in the case of $n$-actads, the commutativity
axiom becomes substantially richer in the $R$-unital case. This is because for $n\geq 3$, in the non-unital case, an element of an $n$-actad can be only non-trivially composed with elements of lower arity, with which it cannot be permuted by an $\{n\}$-morphism. In the $R$-unital context, this is not necessarily the case.
I treat the $R$-unital case in Section \ref{sunital} below.

The original application of operads comes from their algebras. In addition, the notion of algebras prompted the notion of a module over an operad and its algebras, which was invented by May, and is treated, for example, in \cite{CH}.
We would like to generalize these notions to $n$-actads. Indeed, there are directly analogous notions for $n$-actads, and I treat these concepts in Section
\ref{salg}. An important example is the {\em commutative} (in the narrower sense) $(n+1)$-actad $\mathscr{B}_{n+1}$ whose algebras can be thought of as strictly commutative $n$-actads. By ``freeing up" the $(n+1)$-morphisms, we are able to obtain the {\em associative} $(n+1)$-actad $\mathscr{A}_{n+1}$, whose algebras are precisely $n$-actads. 

For $n\geq 2$, the immediate thought is that this idea should be ``iterated." However, implementing that idea is not obvious. Before we can iterate, we must for example answer the question of what happens to $n$ in the iterated algebras. The answer to these questions are given in Section \ref{siter}, and are summarized as follows:

For an $n$-actad $\mathscr{C}$, it turns out that there exists a notion of an {\em iterated $\mathscr{C}$-$I$-algebra} for every subset $I\subseteq\{0,\dots,n\}$, which consists of $|I|$ cube-like $(n-1)$-fold categories. An axiomatization of these notions is made using multisorted algebras
(a definition of which can be found in \cite{multi}).
The concept essentially allows one to label the entries of a tree with the elements of $I$ (for
example, see \rref{IteratedExamplePicture}). In the case of $I = \{0\}$ and $I=\{0,n\}$, the concept gives the notions of algebras and modules, respectively.
I treat this in Section \ref{siter}.

What are some interesting examples and applications of $n$-actads?
So far, we have been working in the ground category of sets when discussing actads and their algebras. One can of course also work with spaces or simplicial sets without any difficulties (as is done with operads in \cite{MayIterated}). This is where one may expect the most interesting topological
examples to reside. At present, finding such examples is restricted by the fact that, for $n\geq 2$, the structures are unfamiliar, but we do present
some examples of the new structure here. One interesting example is the {\em $E_{\infty}$-$n$-actad} $E\mathscr{A}_n$, which is the \v{C}ech resolution of the associative $n$-actad
$\mathscr{A}_n$. Note that since we have a canonical map of $n$-actads
\beg{+}{\mathscr{A}_n\r E\mathscr{A}_n,}
$E\mathscr{A}_n$-algebras are, in particular, $(n-1)$-actads.

For $n=2$, I give some concrete examples related to this case. I describe the free $E\mathscr{A}_2$-algebra $A_{E\mathscr{A}_2}(X_2)$ on the
groupoid $X_2$ fibered over $\mathscr{B}_1$
where $X_2(2)$ is the groupoid given by $\Sigma_2$ acting on itself freely and $X_2(n)$ is empty over $n\neq 2$.
I also give a sufficient condition for the classifying space of a groupoid to be algebras over the operad
$A_{E\mathscr{A}_2}(X_2)$. In some sense, this is an analogue of the result of May \cite{MaySpectra} that the classifying space of a permutative category is an
algebra over the Barratt-Eccles $E_{\infty}$-operad, which is $E\mathscr{A}_1$.

In Section \ref{snew}, I actually construct $E\mathscr{A}_2$-algebras which are new operads (in the sense that they, as far as I know, are not
equivalent to operads which have been previously studied). For a spectrum $E$ (see \cite{spectrum} for an introduction the concept
of spectra in algebraic topology) with a map $E\r H\Z$ (the Eilenberg-Mac Lane spectrum), we can obtain a map of $E_{\infty}$-spaces $E_0\r\Z$. I show that this structure (also when $E_0$ is replaced with any $E_{\infty}$-space),
in particular, leads to a structure of an $E\mathscr{A}_2$-algebra, and show that in certain cases, the resulting example is non-trivial even as an operad, (via \rref{+}), in the sense that an $E_{\infty}$-operad does not map into it.

In fact, algebras over a based version $\bar{\Xi}_{E_0}$ of this operad can be characterized as follows: 
For a map of spectra $E\rightarrow F\rightarrow H\mathbb{Z}$, let $\psi:F_0\rightarrow\mathbb{Z}$ be the corresponding map of
$E_{\infty}$-spaces. Then $\bar{\Xi}_{E_0}$-algebras are canonically equivalent to spaces of the form $\psi^{-1}(-1)$.
Note that $\psi^{-1}(-1)$ is homotopy equivalent to $\psi^{-1}(0)$, which is an infinite loop space, but the homotopy
equivalence is non-canonical when the map of spectra $E\rightarrow H\mathbb{Z}$ does not split.

While this is only one example of a homotopical structure obtained from the $2$-morphisms of actads, its non-triviality
can be thought of as higher-categorical in nature (since it comes from a lack of canonicity).
Thus, our example may provide a clue to the nature of the kinds of applications the new structure of an $n$-actad
may have.

There is yet an entirely different aspect of the story of plain actads. Kock, Joyal, Batanin, Mascari (\cite{KockJoyalBatMascari})
discuss a machine interpretation and also a ``fixed point" of the $+$-construction. It is possible to pursue this further.
Our particular model of plain actads suggests a close connection with the Veblen hierarchy of ordinal notation. From this point of view,
if is possible to iterate the opetope construction transfinitely and we show that the level of complexity of the first fixed point of that construction is the Feferman-Sch{\"u}tte ordinal $\Gamma$.
More specifically, recalling the fact that the elements
of $B_2$ can be identified with planar trees, one is reminded of the Veblen hierarchy of ordinals, where,
letting $\varphi_\beta$ denote the Veblen functions (see \cite{Veblen}), $\varphi_1(\alpha)=\omega^\alpha$ and
$\varphi_{\beta}(\alpha)$ is the $\alpha$'th fixed point of $\varphi_{\beta'}$ for $\beta'<\beta$. (This is shifted by $1$ from the usual convention, which is more natural for my point of view. The shift disappears for $\beta\geq\omega$.)
The first fixed point of this hierarchy (the first ordinal with $\varphi_{\Gamma}(0)=\Gamma$) is the
Feferman-Sch\"{u}tte ordinal $\Gamma$ \cite{Veblen}.
It is particularly well known that trees can be used for a notation of ordinals below $\varepsilon=\varphi_2(0)$ which suggests a connection with plain actads. 
In Section \ref{sord}, I indeed construct an onto function
\beg{*}{B_n\r\varphi_n(0).}
This prompts the idea of generalizing $B_n$ to the case when $n$ is an ordinal, which is carried out readily.
Thus, we can also use $B_{\alpha}$, for $\alpha<\Gamma$, as notation for countable ordinals below $\Gamma$.
The map I constructed is not a bijection (for example even for trees, we have $1+\omega=\omega$). This could be remedied by taking a proper subset of $B_n$,
but we do not undertake this task in the present paper. Since actads are algebraic structures, even the map \rref{*} adds to visualization of ordinals between $\varepsilon$ and $\Gamma$.

\vspace{5mm}

\noindent{\bf Acknowledgment:}
I am grateful to the referees for many helpful suggestions for improving the presentation of this paper.

\vspace{5mm}

\section{Plain Bases and Actads}\label{s1}

\vspace{5mm}

In this section, we will introduce plain actads. By the results of \cite{KockJoyalBatMascari}, (following the concept of Gambino and Hyland
\cite{GambinoHyland}), the category of plain $n$-actads is equivalent to $n$-opetopes without units, (a unital version of this statement also holds, see
Section \ref{sunital}). However, plain actads give a canonical ordered model of the category which we will need when introducing actads.
As we described in the introduction, in the process of defining plain $n$-actads, we must also define the plain $n$-base
and the maps $F_n$'s and the $G_n$'s.
We will define a set $B_n$, also called the {\em plain $n$-base} and plain $n$-actads inductively.
Plain $n$-actads will be universal algebras sorted on $B_n$. 

In general, (see \cite{multi}), for a set $S$, an {\em $S$-sorted universal algebra} is defined to be the collection of
the data of a set $X$ with a map $X\r S$
together with a set $O_n$ for each $n\geq 0$ of $n$-ary operations (specific to the type of algebra we are considering) together
with a map $O_n\r S^{n+1}$ (specifying which operations are allowed with given sources and target), that is required to staisfy certain prescribed identities,
(also specific to the kind of algebra considered) where in compositions, the output of an operation is plugged into an input over the same $s\in S$
(i.e. compositions are
$$O_n\times_{S^n}(O_{k_1}\times\dots\times O_{k_n})\r O_{k_1+\dots+ k_n}).$$
Homomorphisms of a given kind of $S$-sorted algebras are maps over $S$ (i.e. commuting with the given
maps to $S$) which are compatible with the operations. For example, a group acting on a set forms a $\{0,1\}$-sorted
universal algebra $\varphi:X\r \{0,1\}$ where the group is $\varphi^{-1}(0)$ and the set is $\varphi^{-1}(1)$.
$S$-sorted algebras are equivalent to finitary monads in the category $(Set/S)$ of $Sets$ over $S$.
(The proof follows from the methods of Chapter 3 of \cite{AdamekRosicky}.)
This means a monad $M$ such that for any $X\r S$, $M(X\r S)$ is the colimit in $Set/S$ of $M(K\r S)$ with $K\subseteq X$ finite.

For a map $f:S\r T$, a $T$-sorted algebra specifies an $S$-sorted algebra
by allowing those operations and identifies which are allowed after composing with $f$.
For a finitary monad $M$ in $Set/T$,
we can describe the monad $f^*M$ in $Set/S$ as $RML$ where
$L: Set/S \r Set/T$ is the forgetful functor and $R:Set/T \r Set/S$ is its right adjoint, given by $R(X)=X\times_T S$.
In particular, an unsorted universal algebra can be made $S$-sorted by pulling back along the map $S\r *$.
Thus $S$-sorted operads (or multicategories) are defined.
Of course, in multicategories, we often consider multifunctors, which are more general morphisms over a map of sets $S_1\r S_2$
preserving the operations in the obvious sense. 
In general, for a $S$-sorted universal algebra pulled back via a map $f:S\r T$, we have
a notion of a morphism over any map $S_1\r S_2$ over $T$.

Plain $n$-actads are universal algebras sorted in $B_n$, this means that a plain $n$-actad is a set $X$ over $B_n$
(i.e. a map $X\r B_n$) with certain operations (which, as it will turn out, are binary) applicable to elements that map to certain
elements of $B_n$, specific to the operation, and certain identities among iterated operations.
For $n=0$, the plain $0$-base is just a point, 
$$B_0=\{*\}.$$

\vspace{3mm}

To describe the operations in more detail, we must also say more about the maps. For $n\geq 1$, additional structure on $B_n$ is needed,
which we will define inductively.
Write
$$T_{B_{n-1}} = \{(a_1,\dots , a_k) | a_i \in B_{n-1}, \; k\geq 1\}.$$
We have maps
$$F_n:B_n\r T_{B_{n-1}},$$
(let $m_x$ be the length of the sequence $F_n(x)$ for $x\in B_n$, and write
$$F_n(x)=(f_1(x),\dots, f_{m_x}(x))),$$ and
$$G_n:B_n\r B_{n-1}.$$
Suppose $B_{n-1}$ and $(n-1)$-actads have already been defined.
We then have the following structure:
We have
an operation
$$x\circ_i^n y,$$
for $x,y\in B_n$, $1\leq i\leq m_x$, and $G_n(y)= f_i(x)$, such that 
$$m_{x\circ_i^n y}= m_x+m_y-1.$$
To illustrate what we are after, before making exact definitions, let us consider what these structures look like for $n=0,1,2$.
For $n=0$, only consists of a single $B_0$ point, so one can only compose an element with itself leading to picture that is not
very interesting:

\setlength{\unitlength}{0.8cm}
$$
\begin{picture}(12,2)

\put(7,1){\circle*{.2}}

\end{picture}$$

However, for $n=1$ and $n=2$, the pictures are much more revealing. I will use the following pictures to express $1$- and $2$-compositions.
For $n=1$, $B_1$ is the set of all natural numbers, and we see the standard composition picture for planar trees
(identifying a natural number $n$ with the planar tree with $n$ prongs). Here, we are replacing one of the ``prongs" of a planar tree at the end of one of the trees with the ``stem" of a different tree. The ``prongs" are visualized as ordered ``left to right."

\beg{PlainB1Composition}{
\begin{tikzpicture}
\draw (0,1.5) -- (0,2);
\draw (0,2) -- (1.5,2);
\draw (0,2) -- (0.75,3);
\draw (0.75,3) -- (1.5,2);
\draw (0.5,1.5) -- (0.5,2);
\draw (1,1.5) -- (1,2);
\draw (1.5,1.5) -- (1.5,2);
\draw (1,1.5) -- (0.5,0.5);
\draw (0.5,0.5) -- (1.5,0.5);
\draw (1.5,0.5) -- (1,1.5);
\draw (0.5,0.5) -- (0.5,0);
\draw (1,0.5) -- (1,0);
\draw (1.5,0.5) -- (1.5,0);

\draw (0.25,2) -- node[above] {x} ++(1,0);
\draw (0.5,0.5) -- node[above] {y} ++(1,0);
\end{tikzpicture}}

It is important to note that, for operads, any two trees can be composed. This is because for every tree $x$, we have
$G_1(x)=*$ and $F_1(x)=(*,\dots,*)$. So, for every $1\leq i\leq m_x$, $f_i(x)=G_1(x)=*$. So every two trees, every possible $f_i$ will agree with $G_1$.

For $n=2$, though, the composition is quite different. In the picture, I express $2$-composition by putting a tree with the same arity inside one of the ``triangles."
Letting
$$\begin{tikzpicture}

\draw (-2,3.5) node[above] {x\;\; =} ++(1,0);
\draw (0,5.25) node[above] {1} ++(1,0);
\draw (-0.25,3.75) node[above] {2} ++(1,0);
\draw (2.75,3.75) node[above] {3} ++(1,0);
\draw (1.625,2.125) node[above] {4} ++(1,0);

\draw (0.25,5) -- (1,6);
\draw (1,6) -- (1.75,5);
\draw (0.25,5) -- (1.75,5);
\draw (0.25,5) -- (0.25,4.5);
\draw (1.75,5) --(1.75, 4.5);
\draw (0.75,5) -- (0.75,4.5);
\draw (1.25,5) -- (1.25, 4.5);
\draw (0.25,4.5)--(0,3.5);
\draw (0.25,4.5)--(0.5,3.5);
\draw (0,3.5)--(0.5,3.5);
\draw (0,3.5) -- (0,3);
\draw (0.5,3.5) -- (0.5,3);
\draw (1.75,4.5) -- (1, 3.5);
\draw (1.75,4.5) -- (2.5,3.5);
\draw (1, 3.5) -- (2.5,3.5);
\draw (1,3.5) -- (1,3);
\draw (1.5,3.5) -- (1.5,3);
\draw (2,3.5) -- (2,3);
\draw (2.5,3.5) -- (2.5,3);
\draw (1,3) -- (0.5,2);
\draw (1,3) -- (1.5,2);
\draw (0.5,2) -- (1.5,2);
\draw (0.5,2) -- (0.5,1.5);
\draw (1,2) -- (1,1.5);
\draw (1.5,2) -- (1.5,1.5);

\draw (5,3.5) node[above] {y\;\; =} ++(1,0);
\draw (5.75,4.75) node[above] {1} ++(1,0);
\draw (6,3.25) node[above] {2} ++(1,0);

\draw (6,4.5) -- (6.5,5.5);
\draw (7,4.5) -- (6.5,5.5);
\draw (6,4.5) -- (7,4.5);
\draw (6, 4) -- (6,4.5);
\draw (6.5,4) -- (6.5,4.5);
\draw (7,4) -- (7,4.5);
\draw (6.5,4) -- (6.25,3);
\draw (6.5,4) -- (6.75,3);
\draw (6.25,3) -- (6.75,3);
\draw (6.25,3) -- (6.25,2.5);
\draw (6.75,3) -- (6.75,2.5);


\end{tikzpicture}
$$
(where the numbers denote the ordering of the entries of $x$ and $y$),
we can compose the two trees via $x\circ_3^2 y$ (see the below picture)
$$\begin{tikzpicture}
\draw (0.25,5) -- (1,6);
\draw (1,6) -- (1.75,5);
\draw (0.25,5) -- (1.75,5);
\draw (0.25,5) -- (0.25,4.5);
\draw (0.75,5) -- (0.75, 4.5);
\draw (1.25,5) -- (1.25, 4.5);
\draw (1.75,5) --(1.75, 4.5);
\draw (0.25,4.5)--(0,3.5);
\draw (0.25,4.5)--(0.5,3.5);
\draw (0,3.5)--(0.5,3.5);
\draw (0,3.5)--(0,3);
\draw (0.5,3.5)--(0.5,3);
\draw (0.75,2)--(1.75,4.5);
\draw (2.75,2)--(1.75,4.5);
\draw (0.75,2)--(2.75,2);
\draw (1.25,3)--(1.75,4.5);
\draw (1.75,4.5)--(2.25,3);
\draw (1.25,3)--(2.25,3);
\draw (1.25,3)--(1.25,1.5);
\draw (1.75,3)--(1.75,2.75);
\draw (2.25,3)--(2.25,1.5);
\draw (1.75,2.75)--(1.5, 2.25);
\draw (1.75,2.75)--(2,2.25);
\draw (1.5,2.25)--(2,2.25);
\draw (2,2.25)--(2,1.5);
\draw (1.5,2.25)--(1.5,1.5);
\draw (1.25,1.5)--(0.75,0.5);
\draw (1.75,0.5)--(1.25,1.5);
\draw (0.75,0.5)--(1.75,0.5);
\draw (0.75,0.5)--(0.75,0);
\draw (1.75,0.5)--(1.75,0);
\draw (1.25,0.5)--(1.25,0);
\end{tikzpicture}$$

\vspace{5mm}

The induction data of $B_n$ also includes increasing functions
$$\varphi_n^{(x,i,y)}:\{1,\dots,m_x\}\smallsetminus \{i\}\r \{1,\dots, m_x+m_y-1\}$$
$$\psi_n^{(x,i,y)}:\{1,\dots, m_y\}\r\{1,\dots,m_x+m_y-1\}$$
expressing how the elements of the two objects of $B_n$ involved in the
composition are shuffled in the resulting object.
The purpose of these functions is to formulate an analogue of the associativity axiom. 
For example, we see that for $n=1$, where the elements are prongs, the prongs of $y$ ``stay together"
in the shuffle even though the indexing is shifted. For $n=2$, instead of prongs, the elements are triangles. They are also ordered ``left to right" (and in case of a tie, ``top to bottom"), but
we see that in this ordering, both shuffles  $\varphi$ and $\psi$ are non-trivial, i.e. the ``triangles" of $y$ do not ``stay together" after the composition.

The associativity property for composition we require states that for $1\leq i<j\leq m_x$, $G_n(z)=f_j(x)$,
\beg{assocforcirc}{(x\circ^n_i y)\circ_{\varphi_n^{(x,i,y)}(j)}^n z= (x\circ_j^n z)\circ_i^n y.}

For $n=1$, we have
$$\varphi_1^{(x,i,y)}(j)=j+m_y-1 \text{ for }i<j\leq m_x$$
$$\psi_1^{(x,i,y)}(j)=j+i-1 \text{ for } 1 \leq j \leq m_y.$$

\vspace{1mm}

To make a rigorous definition of plain $n$-actads, we need to formulate a few more properties of composition and these functions.
For $j<i$, we must have
$$\varphi_n^{(x,i,y)}(j)=j.$$

(We shall sometimes omit the superscript $n$ when it is clear).
We will also have, for $i<j<k$ and any $n$, the additional axioms
\beg{varphiaxlong}{\varphi_n^{(x\circ_j^n z,i,y)}(\varphi_n^{(x,j,z)}(k))=\varphi_n^{(x\circ^n_i y,\varphi_n^{(x,i,y)}(j),z)}(\varphi_n^{(x,i,y)}(k)),}

\beg{varphiaxshort}{\varphi_n^{(x,i,y)}(j)=\varphi_n^{(x\circ_k t,i,y)}(j).}

\vspace{3mm}

Suppose we have defined $B_n$ (and all associated structure).

\begin{definition}\label{PlainActadDefinition}
A {\em plain $n$-actad} has the data of a set $\mathcal{C}$ with a map
$$\mathcal{C}\r B_n,$$
and, writing $\mathcal{C}(x)$ for the inverse image of and element $x\in B_n$,
composition operators
$$\gamma_{n,i}: \mathcal{C}(x)\times\mathcal{C}(y)\r \mathcal{C}(x\circ_i^n y)$$
for $x,y\in B_n$, with $1\leq i\leq m_x$, $G_n(y)=f_i(y)$.
We require that these composition operators satisfy the condition
that, for $1\leq i< j\leq m_x$, $z\in B_n$, $G_n(z)=f_j(x)$, the following diagram commutes:
\beg{assocgamma}{
\diagram
\mathcal{C}(x)\times\mathcal{C}(y)\times\mathcal{C}(z)\rto^{\gamma_{n,i}\times Id}\dto_{T}& \mathcal{C}(x\circ^n_i y)\times \mathcal{C}(z)\dto^{\gamma_{n, \varphi_n^{(x,i,y)}(j)}}\\
\mathcal{C}(x)\times\mathcal{C}(z)\times\mathcal{C}(y)\dto_{\gamma_{n,j}\times Id}& \mathcal{C}((x\circ_i^n y)\circ_{\varphi_n^{(x,i,y)}(j)}^n z)\dto^=\\
\mathcal{C}(x\circ_j^n z)\times\mathcal{C}(y) \rto^{\gamma_{n,i}}& \mathcal{C}((x\circ_j^n z)\circ_{i}^n y),\\
\enddiagram
}
where $T:\mathcal{C}(x)\times\mathcal{C}(y)\times\mathcal{C}(z)\r \mathcal{C}(x)\times\mathcal{C}(z)\times\mathcal{C}(y)$
is the permutation switching the coordinates of $\mathcal{C}(z)$ and $\mathcal{C}(y)$.
(We shall also sometimes write $\gamma_i$ instead of $\gamma_{n,i}$ when the value of $n$ is clear.)

\end{definition}

\vspace{3mm}

We shall now inductively define $B_n$.
For $n=1$, let $B_1=\N$. In addition, define
$$F_1:B_1\r \{(a_1,\dots,a_k)|a_i\in B_0=\{*\}\},$$
$$G_1:B_1\r B_0$$
by $F_1(n)=\underbrace{(*,*,\dots,*)}_{n \text{ times}}$, $G_1(n)=\{*\}$.
Then, for $x,y\in B_1$, let
$$x\circ^1_i y=x+y-1.$$
Note again that this works for every $i$ because the coordinates of $F_1(x)$ are all *, and $G_1(y)=*$ for every $y$.

\vspace{3mm}

Suppose we have $B_n$ and all the maps associated with it.
By Definition \ref{PlainActadDefinition} above, we have the notion of plain $n$-actads.
Given this, define
$$B_{n+1}=\{((x_1,x_2,\dots,x_k),(i_1,i_2,\dots i_{k-1}))\mid k\geq 1, x_\ell \in B_n,$$
$$G_n(x_j)=f_{i_{j-1}}((\dots(x_1\circ_{i_1}^n x_2)\circ_{i_2}^n x_3\dots )\circ_{i_{j-2}}^n x_{j-1}),$$
$$1\leq i_j\leq m_{(\dots(x_1\circ_{i_1}^n x_2)\circ_{i_2}^n x_3\dots )\circ_{i_{j-1}}^n x_j}=m_{x_1}+\dots+m_{x_{j}}-(j-1),$$
$$i_1\leq i_2\leq\dots \leq i_{k-1}\}.$$
In addition, define
$$G_{n+1}((x_1,x_2,\dots,x_k),(i_1,i_2,\dots, i_{k-1}))=$$
$$=(\dots ((x_1\circ_{i_1}^n x_2)\circ_{i_2}^n x_3)\dots)\circ_{i_{k-1}}^n x_k,$$

\vspace{3mm}

$$F_{n+1}((x_1,x_2\dots,x_k),(i_1,i_2,\dots,i_{k-1}))=$$
$$=(x_1,x_2,\dots,x_k),$$
and
$$ m_{((x_1,x_2,\dots,x_k),(i_1,i_2,\dots,i_{k-1}))}=k.$$

\vspace{5mm}

Then we have the following
\begin{lemma}\label{Lemmaplain}
$B_{n+1}$ is the free plain $n$-actad on $Id:B_n\r B_n$, where
$$((x_1,x_2,\dots, x_k),(i_1,i_2,\dots, i_{k-1}))=$$
$$\gamma_{n,i_{k-1}}(\gamma_{n,i_{k-2}} ( \dots \gamma_{n,i_2} (\gamma_{n,i_1}(x_1, x_2), x_3), x_4) \dots ), x_k).$$
\end{lemma}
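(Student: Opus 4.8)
The plan is to verify directly that the set $B_{n+1}$, equipped with the operations $\gamma_{n,i}$ defined by nested composition as in the statement, satisfies the universal property of the free plain $n$-actad on $\mathrm{Id}:B_n\to B_n$. There are three things to check: first, that the formula in the statement is well-defined, i.e. that every element of $B_{n+1}$ has a canonical expression as an iterated $\gamma_{n,\bullet}$-composite of elements of $B_n$, and that the $\gamma_{n,i}$ so defined land in the correct fiber and make $B_{n+1}$ into an honest plain $n$-actad (that is, satisfy the associativity diagram \rref{assocgamma}); second, that the map $\eta:B_n\to B_{n+1}$ sending $x_1$ to $((x_1),())$ is a map of the underlying $B_n$-sets; and third, the universal property: given any plain $n$-actad $\mathcal{C}\to B_n$ together with a map of $B_n$-sets $B_n\to\mathcal{C}$ (automatically just a section-like datum since the sorting is over $B_n=B_n$), there is a unique extension to a morphism of plain $n$-actads $B_{n+1}\to\mathcal{C}$.

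First I would set up the bookkeeping. Given $((x_1,\dots,x_k),(i_1,\dots,i_{k-1}))\in B_{n+1}$, the constraints $i_1\le i_2\le\cdots\le i_{k-1}$ and the fiber conditions $G_n(x_j)=f_{i_{j-1}}(\cdots(x_1\circ_{i_1}^n x_2)\cdots\circ_{i_{j-2}}^n x_{j-1})$ are exactly what make the nested composite $(\cdots(x_1\circ_{i_1}^n x_2)\circ_{i_2}^n x_3\cdots)\circ_{i_{k-1}}^n x_k$ defined in $B_n$; this composite is by construction $G_{n+1}$ of the element, and $F_{n+1}$ recovers $(x_1,\dots,x_k)$, so $m_{((x_1,\dots,x_k),(i_1,\dots,i_{k-1}))}=k$ agrees with the length of $F_{n+1}$. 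Then I would show that the composite $\gamma_{n,i}$ of $((x_1,\dots,x_k),(i_1,\dots,i_{k-1}))$ with $((y_1,\dots,y_\ell),(j_1,\dots,j_{\ell-1}))$ — legal when $1\le i\le k$ and $G_{n+1}(\text{second})=f_i(\text{first})=x_i$ — is, after using the associativity \rref{assocforcirc} of $\circ^n$ in $B_n$ to reorder the nested compositions into the canonical (weakly increasing index) form, again an element of $B_{n+1}$, namely $((x_1,\dots,x_{i-1},y_1,\dots,y_\ell,x_{i+1},\dots,x_k),(\dots))$ with the index sequence recomputed via the shuffle functions $\varphi_n,\psi_n$. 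This is the computational heart: one must check that the reordered index sequence is again weakly increasing and that the intermediate fiber conditions hold, both of which follow from the axioms \rref{varphiaxlong}, \rref{varphiaxshort} and the requirement $\varphi_n^{(x,i,y)}(j)=j$ for $j<i$. The associativity diagram \rref{assocgamma} for $B_{n+1}$ then reduces to \rref{assocforcirc} in $B_n$ together with these shuffle identities.

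For the universal property, I would argue by induction on $k=m_x$. Any morphism $\Phi:B_{n+1}\to\mathcal{C}$ of plain $n$-actads restricting to the given map $\phi:B_n\to\mathcal{C}$ on length-one elements is forced, since $\Phi$ must commute with the $\gamma_{n,i}$ and every element of $B_{n+1}$ is an iterated $\gamma_{n,\bullet}$-composite of length-one elements; this gives uniqueness. For existence, define $\Phi$ by the formula $\Phi((x_1,\dots,x_k),(i_1,\dots,i_{k-1}))=\gamma_{n,i_{k-1}}(\cdots\gamma_{n,i_1}(\phi(x_1),\phi(x_2)),\dots,\phi(x_k))$ and check it is a morphism of plain $n$-actads, i.e. commutes with all $\gamma_{n,i}$; this again comes down to the associativity diagram \rref{assocgamma} in $\mathcal{C}$ plus the fact established above that $\gamma_{n,i}$ in $B_{n+1}$ is computed by the same reordering, so the two sides match after repeated application of \rref{assocgamma}. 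The main obstacle I anticipate is precisely the reordering step: verifying that composing two canonical-form elements of $B_{n+1}$ and then normalizing via \rref{assocforcirc} yields a weakly-increasing index sequence, and that this normalization is compatible with the shuffle functions $\varphi_n,\psi_n$ — this is where all the axioms \rref{varphiaxlong}, \rref{varphiaxshort}, and the $j<i$ condition on $\varphi_n$ are used, and it requires care to track indices through iterated compositions. Everything else is formal once this normal-form lemma is in hand.
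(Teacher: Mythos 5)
Your overall strategy coincides with the paper's: both arguments come down to showing that every formal iterated $\gamma$-composite of elements of $B_n$ can be rewritten, using the switch relation $\gamma_i(\gamma_j(x,z),y)=\gamma_{\varphi_n^{(x,i,y)}(j)}(\gamma_i(x,y),z)$ for $i<j$ extracted from \rref{assocgamma}, into a \emph{unique} canonical form with monotone indices, so that the set of canonical forms (which is how $B_{n+1}$ is defined) coincides with the free plain $n$-actad $T_n$. The paper packages this as constructing a surjection $T_n\r B_{n+1}$ inverse to the canonical map \rref{canmapBT}, while you package it as verifying the universal property directly; these are equivalent, and you correctly locate the difficulty in the normalization step and correctly identify \rref{varphiaxlong}, \rref{varphiaxshort} and the condition $\varphi_n^{(x,i,y)}(j)=j$ for $j<i$ as the axioms that must carry the load. (A small point of hygiene: the reordering of formal composites is governed by the relation \rref{assocgamma} imposed in $T_n$, not literally by the associativity \rref{assocforcirc} of $\circ^n$ in $B_n$, though the two are formally parallel.)

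The genuine gap is that you leave the essential point unproved, and you slightly misidentify what that point is. Reaching \emph{some} canonical form by finitely many switches is the comparatively routine half. The crux --- and essentially the entire content of the paper's proof --- is \emph{confluence}: the canonical form reached must be independent of the order in which the switches are performed. Without this, the operation ``compose two canonical forms and renormalize'' is not well defined (or, in the paper's formulation, the left inverse of \rref{canmapBT} is not well defined, so injectivity fails), the associativity diagram \rref{assocgamma} for $B_{n+1}$ cannot be verified, and the universal-property argument collapses. Your proposal frames the obstacle as checking that the output index sequence is weakly increasing and that fiber conditions persist, and then defers everything to an unproved ``normal-form lemma.'' The paper supplies precisely the missing argument: an induction on the distance of the minimal index from the end of the sequence, resolving the critical overlaps of two switches by means of \rref{varphiaxlong} and \rref{varphiaxshort}, in close analogy with deriving the Coxeter presentation of $\Sigma_n$ from the Yang--Baxter relations. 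As written, your proposal is a correct reduction to that lemma, but not a proof of the statement.
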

\begin{proof}

\vspace{3mm}

The free plain $n$-actad on $B_n$ is
$$T_n=\{\gamma_{n,i_1}(\gamma_{n,i_2}(\dots\gamma_{n,i_k}(x,y_{k}),\dots,y_{2}),y_{1})|i_k,\dots,i_1\in \N,$$ 
$$ x, y_1, \dots , y_k \in B_n \text{ composable}\}/\sim$$
where $ \sim $ is the smallest equivalence relation
containing compositions of the $\gamma$'s differing only in one pair of consecutive 
$\gamma$'s, replaced by another according to Diagram \rref{assocgamma}.
We have, by definition,
$$B_{n+1}=\{\gamma_{n,j_1}(\gamma_{n,j_2}(\dots \gamma_{n,j_k}(x,x_{k}),\dots,x_{2}),x_{1})|j_k\leq\dots\leq j_1,$$
$$x, x_1, \dots , x_k \in B_n \text{ composable}\}.$$
We want
$$B_{n+1}=T_n.$$
We have a canonical map
\beg{canmapBT}{B_{n+1}\r T_n}
Suppose $i<j$. Then we have, by Diagram \rref{assocgamma},
$$\gamma_i(\gamma_j(x,z),y)=\gamma_{\varphi_n^{(x,i,y)}(j)}(\gamma_i(x,y),z).$$
and $\varphi_n^{(x,i,y)}(j)\geq j>i$. 
So, for every 
\beg{elementofBn+1}{\gamma_{n,i_1}(\gamma_{n,i_2}(\dots\gamma_{n,i_k}(x,y_{k}),\dots,y_{2}),y_{1})\in T_n,}
we can use $\varphi_n^{(x,i_{\ell},y)}(i_{\ell+1})$ to `switch' the order of $i_{\ell}$ and $i_{\ell'}$ whenever $i_{\ell}<i_{\ell+1}$.
So, there exist $j_1\geq\dots\geq j_k$ such that 
$$\gamma_{n,i_1}(\gamma_{n,i_2}(\dots\gamma_{n,i_k}(x,y_{k}),\dots,y_{2}),y_{1})=$$
$$=\gamma_{n,j_1}(\gamma_{n,j_2}(\dots \gamma_{n,j_k}(x,y_{\sigma(k)}),\dots,y_{\sigma(2)}),y_{\sigma(1)})$$
where $\sigma$ is a suitable permutation. (Though we note that it is not necessarily true that $\{i_1, \dots , i_k\}= \{j_1, \dots, j_k\}$.
This gives a left inverse of \rref{canmapBT}, so \rref{canmapBT} is onto.
However, then we need to know that we get the same answer regardless of the order of `switches'.
In proving this, the axioms \rref{varphiaxlong}, \rref{varphiaxshort} will come into play.

\vspace{3mm}

Suppose $i<j<k$. Performing switches in two different ways, we get
$$\gamma_i(\gamma_j(\gamma_k(x,t),z),y)=\gamma_{\varphi_n^{(\gamma_k(x,t),i,y)}(j)}(\gamma_i(\gamma_k(x,t)))=$$
$$=\gamma_{\varphi_n^{(\gamma_k(x,t),i,y)}(j)}(\gamma_{\varphi_n^{(x,i,y)}(k)}(\gamma_i(x,y),t),z)=$$
$$\gamma_{\varphi_n^{(\gamma_i(x,y),\varphi_n^{(\gamma_k(x,t),i,y)}(j),z)}(\varphi_n^{(x,i,y)}(k))}(\gamma_{\varphi_n^{(\gamma_k(x,t),i,y)}}(\gamma_i(x,y),z),t)$$
and
$$\gamma_i(\gamma_j(\gamma_k(x,t),z),y)=\gamma_i(\gamma_{\varphi_n^{(x,j,z)}(k)}(\gamma_j(x,z),t),y)=$$
$$=\gamma_{\varphi_n^{(\gamma_j(x,z),i,y)}(\varphi_n^{(x,j,z)}(k))}(\gamma_i(\gamma_j(x,z),y),t)=$$
$$=\gamma_{\varphi_n^{(\gamma_j(x,z),i,y)}(\varphi_n^{(x,j,z)}(k))}(\gamma_{\varphi_n^{(x,i,y)}(j)}(\gamma_i(x,y),z),t).$$
Thus, we must have
\beg{longphiaxiom}{\varphi_n^{(\gamma_j(x,z),i,y)}(\varphi_n^{(x,j,z)}(k))=\varphi_n^{(\gamma_i(x,y),\varphi_n^{(\gamma_k(x,t),i,y)}(j),z)}(\varphi_n^{(x,i,y)}(k)),}
\beg{shortphiaxiom}{\varphi_n^{(x,i,y)}(j)=\varphi_n^{(\gamma_k(x,t),i,y)}(j),}
which are \rref{varphiaxlong} and \rref{varphiaxshort}.
Conversely, \rref{varphiaxlong} and \rref{varphiaxshort} guarantee that two switches have the same result, i.e.
\beg{longshortaxiom}{\varphi_n^{(\gamma_j(x,z),i,y)}(\varphi_n^{(x,j,y)}(k))=\varphi_n^{(\gamma_i(x,y),\varphi_n^{(x,i,y)}(j),z)}(\varphi_n^{(x,i,y)}(k)).}

\vspace{3mm}

Now, we will use induction to prove that we always get the same result regardless of the order we perform switches
of consecutive pairs $\gamma_i\gamma_j$, for $i<j$. Let
$$i=min\{i_1,\dots,i_k\},$$
where $i_1,\dots,i_k$ are as in \rref{elementofBn+1}. If $i$ occurs only once in the sequence $(i_1,\dots,i_k)$, then let
$n_i$ be the distance from $i$ to the end of the sequence.
If there exist $\ell_1<\dots<\ell_m\in \{1,\dots,k\}$ such that
$$i=i_{\ell_1},\dots,i_{\ell_m},$$
then let $n_i$ be the distance from $i_{\ell_m}$ to the end of the sequence. 

If $i$ is at the end of the sequence (i.e. $i=i_k$ and $n_i=0$), then repeat this process for $(i_1,\dots,i_{k-1})$. If the smallest $i$ in every sequence
is at the end, $(i_1,\dots,i_k)$ is already in order. So, then we are done.
 
Suppose we know that the result is independent of the order we take the switches if $n_i=p$.
Then suppose $n_i=p+1$. 

The strategy is to show that without loss of generality, all the possible switches from $j<k$ to $j>k$ to the right of $i$
can be done first before $i$ is moved to the right. Then the order of the switches to the right of $i$ does not matter by the
induction hypothesis.

Consider the position when the last switch to the right of $i$ before $i$ is moved to the right has been performed.
Assume first that the situation immediately to the right of $i$ is
\beg{iljgk}{i<j>k.}
Thus, the next position will be 
\beg{jpgilk}{j'>i<k.}
Now if any more switches to the right of $i$ happen before $i$ is moved again, then equivalently, those switches could have been
made first, and then the move from \rref{iljgk} to \rref{jpgilk}.

Thus, either all the possible switches to the right of $i$ have been performed before \rref{iljgk} was reached (which is what we were trying
to assume), or the position right before $i$ was moved to the right was
\beg{iljlk}{i<j<k.}
Now $i$ is moved to the right producing
$$\bar{j}>i<k.$$
By the induction hypothesis, the order of the remaining moves does not matter,
so we may as well continue to
$$\bar{j}<\bar{k}>i,$$
$$\bar{k}>\bar{j}>i.$$
But by axioms \rref{varphiaxlong} and \rref{varphiaxshort}, those are equivalent to
$$i<k'>j$$
\beg{tripleijk}{k''>i<j}
$$k''>j''>i$$
In the sequence \rref{tripleijk}, one more move was executed to the right of $i$ before $i$ was
moved.

\vspace{3mm}

\end{proof}

Note that this proof is analogous to the proof of the presentation of $\Sigma_n$ by the ``Yang-Baxter relations"
$a_ia_{i+1}a_i=a_{i+1}a_ia_{i+1}$ (along with $a_ia_j=a_ja_i$ for $j>i+1$) where $a_i=(i,i+1)$ in the cycle notation.

\vspace{5mm}

Then, the functions $\varphi_{n+1}^{(x,j,y)}$, $\psi_{n+1}^{(x,j,y)}$ for
$$x=((x_1,x_2,\dots,x_k),(i_1,i_2,\dots,i_{k-1}))$$
$$y=((y_1,y_2,\dots,y_{\ell}),(\iota_1,\iota_2,\dots,\iota_{\ell-1})).$$
are determined as follows:

\noindent
For a plain $n$-actad $\mathcal{C}$, and for $\zeta_i\in\mathcal{C}(x_i)$, $\zeta_{\iota}\in\mathcal{C}(y_{\iota}),$
we have

$$\gamma_{n,i_{k-1}}(\dots \gamma_{n,i_j}(\gamma_{n,i_{j-1}}(\dots \gamma_{n,i_2}(\gamma_{n,i_1}(\zeta_1,\zeta_2),\zeta_3)\dots,\zeta_{j-1}),$$
$$\gamma_{n,\iota_{\ell-1}}(\dots \gamma_{n,\iota_2}(\gamma_{n,\iota_1}(\zeta_1,\zeta_2),\zeta_3),\dots,\zeta_\ell),\dots x_k)=$$
$$\gamma_{n,\lambda_{k+\ell-1}}(\dots \gamma_{n,\lambda_2}(\gamma_{n,\lambda_1}(\mu_1,\mu_2),\mu_3)\dots, \mu_{\lambda_{k+\ell-1}}),$$
with 
$$\lambda_1\leq\dots\leq \lambda_{k+\ell-1}$$
$$\zeta_s=\mu_{\varphi_{n+1}^{(x,j,y)}(s)}$$
$$\zeta_t=\mu_{\psi_{n+1}^{(x,j,y)}(t)},$$
for $s\in\{1,\dots,k\},t\in\{1,\dots,\ell\}$.
Finally, also define
\beg{circinduction}{\begin{array}{c}
((x_1,\dots,x_k),(i_1,\dots i_{k-1}))\circ_j^{n+1}((y_1,\dots, y_{\ell}),(\iota_1,\dots,\iota_{\ell-1}))=\\
=\gamma_{n,i_{k-1}}(\dots \gamma_{n,i_j}(\gamma_{n,i_{j-1}}(((x_1,\dots,x_k),(i_1,\dots i_{k-1})),\\
((y_1,\dots, y_{\ell}),(\iota_1,\dots,\iota_{\ell-1}))),x_{j+1}),\dots,x_k).\\
\end{array}
}

Axioms \rref{assocforcirc} and \rref{varphiaxlong} then follow from the fact that one can similarly define, directly analogously,
a composition of $((x_1,\dots,x_k),(i_1,\dots,i_{k-1}))$ with two appropriate
elements 
$$((y_1,\dots, y_{\ell_1}),(\iota_1,\dots,\iota_{\ell_1-1}))$$ and 
$$((z_1,\dots,z_{\ell_2}),(\kappa_1,\dots,\kappa_{\ell_2-1}))$$ at
$1\leq j_1<j_2\leq m_x$, and observe that it is equal, by definition, to the compositions in the two different orders, and similarly
for the shuffles.

Axiom \rref{varphiaxshort} follows from the fact that, by Lemma \ref{Lemmaplain}, composing at a $j$, as in \rref{circinduction} does not affect the 
shuffles of compositions at $j'<j$.

\vspace{5mm}

\section{Actads and Bases} \label{s2}

\vspace{5mm}

Using this, we will define the $n$-base $\mathscr{B}_n$, and then $n$-actads.
Unlike the $B_n$'s, the $\mathscr{B}_n$'s are not sets. $\mathscr{B}_n$ and $n$-actads are $n$-fold categories.
(Recall, these are more general than $n$-categories. For example, $n=2$ gives the notion of bi-categories, see \cite{Bicategory},
which strictly contains the notion of $2$-categories.)
One can define an $n$-fold category inductively as a category internal in $(n-1)$-fold categories
(i.e. where both objects and morphisms are $(n-1)$-fold categories, and the maps
involved in the axioms of the category are morphisms of $(n-1)$-fold categories, i.e. $(n-1)$-fold functors).
We start the induction with $1$-fold categories. Note that $n$-fold categories still have pullbacks.

This means that it involves sets of ``$S$-morphisms" for $S$ any subset of $\{1,\dots, n\}$.
In particular, ``$\emptyset$-morphisms" are actually the objects, which for $\mathscr{B}_n$ are
the plain $n$-bases, and for $n$-actads, they are the plain $n$-actads.
To discuss this further, let us introduce some notation:

\vspace{3mm}

Suppose $\mathscr{C}$ is an $n$-fold category and $S\subseteq\{1,\dots,n\}$.
For 
\beg{ISto01}{\mathbb{I}:S\r \{0,1\},}
we have the source-target map
\beg{scrSI}{\mathscr{S}_{\mathbb{I}}: S-Mor(\mathscr{C})\r (\emptyset-Mor(\mathscr{C}))=: Obj(\mathscr{C}).}
(Then $0$ stands for source and $1$ stands for target.)  
In addition, let $\underline{0}:S\r\{0,1\}$ be the constant $0$ function.

\vspace{5mm}

More generally, for $T\subseteq S$, $\mathbb{I}:S\smallsetminus T\r\{0,1\}$, we have a
source-target map
$$\mathscr{S}_{\mathbb{I}}^T:S-Mor(\mathscr{C})\r T-Mor(\mathscr{C}).$$
Call an $n$-fold category $\mathscr{C}$ {\em cube-like} if for all $S\subseteq\{1,\dots,n\}$, for all $x\in Obj(\mathscr{C})$, and
$$\phi_i\in \{i\}-Mor(\mathscr{C})$$
(for any $i\in S$), such that $\mathscr{S}_{\underline{0}}(\phi_i)=x$, there exists a unique $\phi\in S-Mor(\mathscr{C})$ such that
$$\mathscr{S}_{\underline{0}}^{\{i\}}(\phi)=\phi_i.$$

\vspace{3mm}

Being cube-like is a very substantial restriction on $n$-fold categories, essentially still a concept of ($1$-fold) categories, since
the condition only involves morphisms between objects.
In this paper, all $n$-fold categories we will deal with will be cube-like, unless we specify otherwise. Also, in all of our examples of cube-like $n$-fold
categories, the $1$-subcategories of $\{i\}$-morphisms will be groupoids. We shall call such cube-like categories 
{\em invertible cube-like}. In particular, $\mathscr{B}_n$ and the $n$-actads will be invertible cube-like $n$-fold categories.
An $n$-fold functor $F:\mathscr{C}\r\mathscr{D}$ is called {\em fibered} if for every $x\in Obj(\mathscr{C})$, for
every $\phi\in S-Mor(\mathscr{D})$ with $\mathscr{S}_{\underline{0}} (\phi)=F(x)$,
there exists a unique $\psi\in S-Mor(\mathscr{C})$ such that $\mathscr{S}_{\underline{0}}(\psi)=x$, $F(\psi)=\phi$,
and all $S$-morphisms of $\mathscr{C}$ are given this way.
(Note that other variants of this concept exist).

Also, define

\begin{definition}
Define the category $I$ by 
$$Obj(I)=\{0,1\}$$
$$Mor(I)=\{(0\r 0),(0\r 1),(1\r 1)\}.$$
Then let $Set_{(n)}$ be the $n$-fold category of sets, defined as follows: For a subset $S\subseteq\{ 1, \dots , n\}$,
let the set of $S$-morphisms is
$$S-Mor(Set_{(n)})=\{F| F \text{ is a functor } I^S\r Set\}.$$
\end{definition}

Now, we have a Lemma:

\begin{lemma}

\vspace{1.5mm}

\begin{enumerate}

\vspace{1.5mm}

\item\label{lemmaitem1}

Let $\mathscr{D}$ be an $n$-fold category. A fibered $n$-fold functor $F:\mathscr{C}\r\mathscr{D}$ produces (in a bijective fashion)
an $n$-fold functor $\Phi:\mathscr{D}\r Set_{(n)}$: for some $S \subseteq \{1, \dots , n\}$, $\psi \in S-Mor (\mathscr{D})$,
\beg{Lemmaitemnontriv}{\Phi(\psi)(\mathbb{I})=\mathscr{S}_{\mathbb{I}}(F^{-1}(\psi)),}
(recall \rref{ISto01} for the definition of $\mathbb{I}$) with
$$\Phi(\psi):Mor(I^{|S|})\r Mor(Set)$$
defined by the fibered property of $F$.

\vspace{3mm}

\item\label{lemmaitem2}

If $F:\mathscr{C}\r\mathscr{D}$ is a fibered $n$-fold functor and $\mathscr{D}$ is cube-like, then $\mathscr{C}$ is cube-like.

\end{enumerate}
\end{lemma}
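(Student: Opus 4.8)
The plan is to recognize item (1) as the $n$-fold-categorical analogue of the Grothendieck correspondence between discrete opfibrations and covariant $Set$-valued functors: note that ``fibered'', as defined above, is a discrete-opfibration-type condition (lifts are prescribed through their source $\mathscr{S}_{\underline 0}$), so the associated $Set_{(n)}$-valued functor should be covariant. Item (2) will then come out as a short formal consequence of the definitions of ``fibered'' and ``cube-like''.

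For (1), given a fibered $n$-fold functor $F:\mathscr{C}\r\mathscr{D}$, I would define $\Phi:\mathscr{D}\r Set_{(n)}$ by $\Phi(d)=F^{-1}(d)$ on objects and, on an $S$-morphism $\psi$ of $\mathscr{D}$, by the functor $\Phi(\psi):I^S\r Set$ whose value at a vertex $\mathbb{I}:S\r\{0,1\}$ is the fiber $F^{-1}(\mathscr{S}_{\mathbb{I}}(\psi))$, in accordance with \rref{Lemmaitemnontriv}, and whose value on an edge $\mathbb{I}\r\mathbb{I}'$ of $I^S$ is the map obtained by identifying, via the fibered property, an element of $F^{-1}(\mathscr{S}_{\mathbb{I}}(\psi))$ with the unique $F$-lift of the corresponding face of $\psi$ having that source, and then taking its $\mathscr{S}_{\mathbb{I}'}$-corner. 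The verifications are: (i) $\Phi(\psi)$ is a well-defined functor on $I^S$, functoriality around squares being exactly the compatibility of the iterated source--target maps $\mathscr{S}^T_{\mathbb{I}}$ that holds because $\psi$ is an $S$-morphism of the $n$-fold category $\mathscr{C}$; (ii) $\Phi$ commutes with all source--target, composition and unit maps of $\mathscr{D}$ and $Set_{(n)}$ --- source--target compatibility is built into the definition, while compatibility with composition and units uses that a composite (resp. identity) of unique $F$-lifts is again the unique lift of the composite (resp. identity), which is where the clause ``all $S$-morphisms of $\mathscr{C}$ are given this way'' enters. To keep the bookkeeping manageable I would run this as an induction on $n$, writing $\mathscr{C}$ and $\mathscr{D}$ as categories internal in $(n-1)$-fold categories in direction $n$; ``fibered'' then says the object- and morphism-$(n-1)$-fold functors are fibered and the source square in direction $n$ is a pullback, so the inductive hypothesis yields $\Phi$ on objects and on $\{n\}$-morphisms, and the pullback condition glues these into an internal diagram $\mathscr{D}\r Set_{(n)}$.

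For the ``bijective fashion'' clause of (1), I would give the inverse explicitly as a category-of-elements construction: from an $n$-fold functor $\Phi:\mathscr{D}\r Set_{(n)}$ set $Obj(\mathscr{C})=\coprod_{d\in Obj(\mathscr{D})}\Phi(d)$ and, for each $S$, $S-Mor(\mathscr{C})=\coprod_{\psi\in S-Mor(\mathscr{D})}\Phi(\psi)(\underline 0)$, with the $n$-fold-category structure maps read off from the functors $\Phi(\psi):I^S\r Set$ and with $F$ the obvious projection, which is fibered by construction; then check that $F\mapsto\Phi$ and $\Phi\mapsto F$ are mutually inverse, a routine unwinding in which the fibered property is precisely what makes the comparison $n$-fold functor from $\mathscr{C}$ to the category of elements of its associated $\Phi$ an isomorphism.

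For item (2): let $S\subseteq\{1,\dots,n\}$, $x\in Obj(\mathscr{C})$ and $\phi_i\in\{i\}-Mor(\mathscr{C})$ (for $i\in S$) with $\mathscr{S}_{\underline 0}(\phi_i)=x$, the case $S=\emptyset$ being trivial. Applying $F$, the $F(\phi_i)$ are $\{i\}$-morphisms of $\mathscr{D}$ with common source $F(x)$, so cube-likeness of $\mathscr{D}$ produces a unique $\bar\phi\in S-Mor(\mathscr{D})$ with $\mathscr{S}^{\{i\}}_{\underline 0}(\bar\phi)=F(\phi_i)$ for all $i$, necessarily with $\mathscr{S}_{\underline 0}(\bar\phi)=F(x)$; the fibered property then gives a unique $\phi\in S-Mor(\mathscr{C})$ with $\mathscr{S}_{\underline 0}(\phi)=x$ and $F(\phi)=\bar\phi$. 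For each $i$, $\mathscr{S}^{\{i\}}_{\underline 0}(\phi)$ and $\phi_i$ are $\{i\}$-morphisms of $\mathscr{C}$ over the same $F(\phi_i)$ with the same source $x$, hence coincide by uniqueness of lifts, giving existence; and if $\phi'$ also satisfies $\mathscr{S}^{\{i\}}_{\underline 0}(\phi')=\phi_i$ for all $i$, then $F(\phi')$ restricts to $F(\phi_i)$ along every axis with source $F(x)$, so $F(\phi')=\bar\phi$ by cube-like uniqueness in $\mathscr{D}$, whence $\phi'$ is a lift of $\bar\phi$ with source $x$ and $\phi'=\phi$ by the fibered uniqueness. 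The main obstacle is the bookkeeping in (1): checking that the cube $\mathbb{I}\mapsto F^{-1}(\mathscr{S}_{\mathbb{I}}(\psi))$ really assembles, through the unique lifts, into a functor $I^S\r Set$ compatibly with the internal-category structure in all $n$ directions, and that the category-of-elements construction inverts it. Item (2), by contrast, is the short diagram chase above.
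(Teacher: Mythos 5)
Your proposal is correct and follows essentially the same route as the paper: $\Phi$ is defined on an $S$-morphism $\psi$ by the fibers $F^{-1}(\mathscr{S}_{\mathbb{I}}(\psi))$, the inverse is the disjoint-union (category-of-elements) construction $S\mbox{-}Mor(\mathscr{C})=\coprod_{\psi}\Phi(\psi)$, and item (2) is the same lift-complete-compare argument; the paper merely states these steps far more tersely. Your extra scaffolding (the discrete-opfibration framing and the induction on $n$ via internal categories) is organizational rather than a genuinely different argument.
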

\begin{proof}

\vspace{3mm}

First we will discuss $($\ref{lemmaitem1}$)$.
Starting with a fibered $n$-fold functor $F:\mathscr{C}\r\mathscr{D}$, define $\Phi$ by \rref{Lemmaitemnontriv}. Then the fact that $\Phi$ is an $n$-fold functor
follows directly from the properties of composition in an $n$-fold category.

Given an $n$-fold functor $\Phi:\mathscr{D}\r Set_{(n)}$, define, for $S\subseteq \{1,\dots,n\}$,
\beg{proofitemnontriv}{S-Mor(\mathscr{C})=\coprod_{\psi\in S-Mor(\mathscr{D})} \Phi(\psi)}
with $F:\mathscr{C}\r\mathscr{D}$ given by projection.
Then $F$ is a fibered $n$-fold functor by the functoriality properties of $F$. Additionally,
the constructions of $\Phi$ from $F$ and $F$ from $\Phi$ are obviously inverse to each other.

\vspace{3mm}

$($\ref{lemmaitem2}$)$ is immediate from \rref{proofitemnontriv}, since in a fibered $n$-fold functor,
for $S\subseteq \{1,\dots,n\}$, we can lift the $\{i\}$-morphisms with $i\in S$ with a common source uniquely to $\{i\}$-morphisms
with a given common source, which then complete uniquely to an $S$-morphism both downstairs and upstairs.

\end{proof}

\vspace{5mm}

We will also need a way to construct an $n$-fold category from sequences of morphisms of an $(n-1)$-fold category for the induction
step from $\mathscr{B}_n$ to $\mathscr{B}_{n+1}$. (This is a generalization of $T_{B_{n-1}}$ from the case of plain $n$-actads and $n$-bases.)

\begin{definition}\label{defnTCinduced}
Suppose $C$ is an $(n-1)$-fold category.
Define the $n$-fold category $T_C$ in the following way:
Suppose $S\subseteq \{1,\dots,n-1\}$. Then define the $S$-morphisms by
$$
S-Mor(T_C)=\{(\alpha_1,\dots,\alpha_k)|k \geq 1,\; \alpha_i\in S-Mor(C)\}$$
\beg{TCinducednmors}{\begin{array}{c}
(S\cup \{n\})-Mor(T_C)=\\
\{f=((\alpha_1,\dots,\alpha_k),\sigma
,(\beta_1,\dots, \beta_k))\\
:(\alpha_1,\dots,\alpha_k)\r(\beta_1,\dots,\beta_k)\mid k\geq 1, \; \alpha_i\in S-Mor(C)\\
\sigma:\{1,\dots,k\}\r\{1,\dots,k\}\text{ permutation}, \forall i\\
\beta_i=\alpha_{\sigma(i)}\}.\\
\end{array}}
\end{definition}

\vspace{5mm}

For $f\in S\cup\{n\}-Mor(T_C)$ in \rref{TCinducednmors}, we will sometimes denote the permutation $\sigma$
by $\sigma_f$.
It suffices to have $\mathscr{B}_n$ as an $(n-1)$-fold category, with
an $(n-1)$-fold functor
$$\mathscr{F}_n:\mathscr{B}_n\r T_{\mathscr{B}_{n-1}}.$$
Then the $n$-fold category structure of $\mathscr{B}_n$ is induced by $\mathscr{F}_n$.
Explicitly, for $S\subseteq \{1,\dots,n-1\}$, $x,y\in S-Mor(\mathscr{B}_n)$,

\vspace{0.5mm}

$$(S\cup\{n\})-Mor(\mathscr{B}_n)(x,y)=(S\cup\{n\})-Mor(T_{\mathscr{B}_{n-1}})(\mathscr{F_n}(x),\mathscr{F_n}(y)).$$
 
\vspace{0.5mm}

Then for $S\subseteq \{1,\dots, n\}$, $x\in S-Mor(\mathscr{B}_n)$, denote again
the length of $\mathscr{F}_n(x)$ by $m_x$ and
$$\mathscr{F}_n(x)=:(f_1(x),\dots,f_{m_x}(x)).$$
Thereby, $\mathscr{F}_n$ becomes an $n$-fold functor. We will also have an $(n-1)$-fold functor
$$\mathscr{G}_n:\mathscr{B}_n\r\mathscr{B}_{n-1}$$
given by applying the $\mathscr{B}_{n-1}$-composition, similarly as in our definition of $G_n$.

\vspace{3mm}

An example of a $\{1\}$-morphism in $\mathscr{B}_2$ is the map that takes the tree on the left to the tree on the right by

\beg{1permutationof2trees}{\begin{tikzpicture}
\draw(0,0)--(0,0.5);
\draw(0,0.5)--(1,0.5);
\draw(1,0)--(1,0.5);
\draw(0,0.5)--(0.5,1.5);
\draw(1,0.5)--(0.5,1.5);
\draw(0.5,1.5)--(0.5,2);
\draw(0.5,2)--(1.5,2);
\draw(1.5,2)--(1.5,1.5);
\draw(0.5,2)--(1,3);
\draw(1,3)--(1.5,2);

\draw(5,0)--(5,0.5);
\draw(5,0.5)--(4,0.5);
\draw(4,0)--(4,0.5);
\draw(5,0.5)--(4.5,1.5);
\draw(4,0.5)--(4.5,1.5);
\draw(3.5,1.5)--(3.5,2);
\draw(3.5,2)--(4.5,2);
\draw(4.5,2)--(4.5,1.5);
\draw(3.5,2)--(4,3);
\draw(4,3)--(4.5,2);
\draw [->] (0, 0) .. controls (2.5, 0.5) .. (5, 0);
\draw [->] (1, 0) .. controls (2.5, -0.5) .. (4, 0);
\draw [->] (0.5, 1.5) .. controls (1.5, 2) .. (3.5, 1.5);
\draw [->] (1.5, 1.5) .. controls (2.5, 2) .. (4.5, 1.5);
\end{tikzpicture}}


\noindent Call it $\vartheta$.
An example of a $\{2\}$-morphism in $\mathscr{B}_2$ is the map that takes the tree on the left to the tree on the right by

\beg{2permutationof2trees}{\begin{tikzpicture}
\draw(0,0)--(0,0.5);
\draw(0,0.5)--(1,0.5);
\draw(1,0)--(1,0.5);
\draw(0,0.5)--(0.5,1.5);
\draw(1,0.5)--(0.5,1.5);
\draw(0.5,1.5)--(0.5,2);
\draw(0.5,2)--(1.5,2);
\draw(1.5,2)--(1.5,1.5);
\draw(0.5,2)--(1,3);
\draw(1,3)--(1.5,2);

\draw(3,0)--(3,0.5);
\draw(3,0.5)--(4,0.5);
\draw(4,0)--(4,0.5);
\draw(3,0.5)--(3.5,1.5);
\draw(4,0.5)--(3.5,1.5);
\draw(3.5,1.5)--(3.5,2);
\draw(3.5,2)--(4.5,2);
\draw(4.5,2)--(4.5,1.5);
\draw(3.5,2)--(4,3);
\draw(4,3)--(4.5,2);
\draw [->] (0.5, 1) .. controls (2.5, 1) .. (4, 2.5);
\draw [->] (1, 2.5) .. controls (2.5, 2) .. (3.5, 1);
\end{tikzpicture}}


\noindent Call it $\varrho$. Then we have a $\{1,2\}$-morphism in $\mathscr{B}_2$ given by the following diagram:
\vspace{3mm}

\hspace{15mm}\begin{tikzpicture}
\draw(0,0)--(0,0.5);
\draw(0,0.5)--(1,0.5);
\draw(1,0)--(1,0.5);
\draw(0,0.5)--(0.5,1.5);
\draw(1,0.5)--(0.5,1.5);
\draw(0.5,1.5)--(0.5,2);
\draw(0.5,2)--(1.5,2);
\draw(1.5,2)--(1.5,1.5);
\draw(0.5,2)--(1,3);
\draw(1,3)--(1.5,2);

\draw(6,0)--(6,0.5);
\draw(6,0.5)--(5,0.5);
\draw(5,0)--(5,0.5);
\draw(6,0.5)--(5.5,1.5);
\draw(5,0.5)--(5.5,1.5);
\draw(4.5,1.5)--(4.5,2);
\draw(4.5,2)--(5.5,2);
\draw(5.5,2)--(5.5,1.5);
\draw(4.5,2)--(5,3);
\draw(5,3)--(5.5,2);

\draw(0,4)--(0,4.5);
\draw(0,4.5)--(1,4.5);
\draw(1,4)--(1,4.5);
\draw(0,4.5)--(0.5,5.5);
\draw(1,4.5)--(0.5,5.5);
\draw(0.5,5.5)--(0.5,6);
\draw(0.5,6)--(1.5,6);
\draw(1.5,6)--(1.5,5.5);
\draw(0.5,6)--(1,7);
\draw(1,7)--(1.5,6);

\draw(4,4)--(4,4.5);
\draw(4,4.5)--(5,4.5);
\draw(5,4)--(5,4.5);
\draw(4,4.5)--(4.5,5.5);
\draw(5,4.5)--(4.5,5.5);
\draw(4.5,5.5)--(4.5,6);
\draw(4.5,6)--(5.5,6);
\draw(5.5,6)--(5.5,5.5);
\draw(4.5,6)--(5,7);
\draw(5,7)--(5.5,6);

\draw [dashed,->] (0, 0) .. controls (2.5, 0.5) .. (5, 0); 
\draw [dashed,->] (1, 0) .. controls (3.5, 0.5) .. (6, 0);
\draw [dashed,->] (0.5, 1.5) .. controls (2.5, 2) .. (5.5, 1.5);
\draw [dashed,->] (1.5, 1.5) .. controls (3, 1) .. (4.5, 1.5);
\draw [->] (1,6.5) .. controls (-1.5,3.75) .. (0.5,1);
\draw [->] (0.5,5) .. controls (-0.5,3.75) .. (1,2.5);
\draw [->] (0,4) .. controls (2.5,4.5) .. (5,4);
\draw [->] (1,4) .. controls (2.5,3.5) .. (4,4);
\draw [->] (0.5,5.5) .. controls (2.5,6) .. (4.5,5.5);
\draw [->] (1.5,5.5) .. controls (3.5,6) .. (5.5,5.5);
\draw [dashed,->] (4.5,5) .. controls (6,3.75) .. (5,2.5);
\draw [dashed,->] (5,6.5) .. controls (7,3.75) .. (5.5,1);
\draw [->] (0,4) .. controls (-1,2) .. (0.5,1.5);
\draw [->] (0.5,5.5) .. controls (-2,2) .. (0,0);
\draw [->] (1,4) .. controls (2,2) .. (1.5,1.5);
\draw [->] (1.5,5.5) .. controls (3,2) .. (1,0);
\draw [dashed,->] (5.5,5.5) .. controls (7,2) .. (6,0);
\draw [dashed,->] (4.5,5.5) .. controls (8,2) .. (5,0);
\draw [dashed,->] (4,4) .. controls (3,2) .. (4.5, 1.5);
\draw [dashed,->] (5,4) .. controls (4,2) .. (5.5,1.5);
\end{tikzpicture}

\vspace{3mm}

\noindent where the maps on the left are $\varrho$, and the maps on the bottom are $\vartheta$.

To capture the properties of the composition, define cube-like $n$-fold categories $Comp_n$, $Comp^2_n$ as follows:
For $S\subseteq\{1,\dots,n-1\}$, define
\beg{CompnS-morsdefn}{
\begin{array}{c}
S-Mor(Comp_n)=\\
\{(x,i,y)\mid x,y \in S-Mor(\mathscr{B}_n), 1\leq i\leq m_x, \mathscr{G}_n(y)=f_i(x)\},
\end{array}}

\vspace{1.5mm}

$$(S\cup\{n\})-Mor(Comp_n)=\{((x,i,y)\r(x,i',y), f, g)\mid1\leq i\leq m_x,$$
$$x,y\in S-Mor(\mathscr{B}_n), \mathscr{G}_n(y)=f_i(x),$$
$$f:x\r x,\; g:y\r y,\; f,g\in (S\cup\{n\})-Mor(\mathscr{B}_n), i'=\sigma_f(i)\},$$

\vspace{3mm}

$$S-Mor(Comp^2_n)=\{(x,i,j,y,z)\mid x,y,z\in S-Mor(\mathscr{B}_n),$$
$$1\leq i\neq j\leq m_x, \mathscr{G}_n(y)=f_i(x), \mathscr{G}_n (z)=f_j(x)\},$$

\vspace{1.5mm}

$$(S\cup\{n\})-Mor(Comp_n^2)=\{((x,i,j,y,z)\r(x,i',j',y,z),f,g,h)\mid$$
$$1\leq i\neq j\leq m_x,$$
$$x,y,z\in S-Mor(\mathscr{B}_n), \mathscr{G}_n(y)=f_i(x), \mathscr{G}_n(z)=f_j(x),$$
$$f:x\r x,\; g:y\r y,\; h:z\r z,\; f,g,h\in S\cup\{n\}-Mor(\mathscr{B}_n),$$
$$i'=\sigma_f(i), j'=\sigma_f(j)\}.$$

\vspace{3mm}

Note that we then have, by definition, projection $n$-fold functors
$$Comp_n\r \mathscr{B}_n\times\mathscr{B}_n,$$
$$Comp_n^2\r \mathscr{B}_n\times\mathscr{B}_n\times\mathscr{B}_n.$$

\vspace{3mm}

We are also given (as a part of the induction data) increasing functions 
$$\varphi_n^{(x,i,y)}:\{1,\dots,m_x\}\r \{1,\dots, m_x+m_y-1\},$$
$$\psi_n^{(x,i,y)} : \{1, \dots , m_x\} \r \{ 1, \dots , m_x+m_y-1\}$$
for $(x,i,y)\in S-Mor(Comp_n)$ for some $S\subseteq\{1,\dots, n-1\}$,
and a composition $n$-fold functor
\beg{PhiCompnBn}{\Phi_n:Comp_n\r \mathscr{B}_n.}
Then define
$$\Phi_n^1,\Phi_n^2:Comp_n^2\r Comp_n,$$
with 
$$\Phi_n^1(x,i,j,y,z)=(\Phi_n(x,i,y),\varphi_n^{(x,i,y)}(j),z)$$
$$\Phi_n^2(x,i,j,y,z)=(\Phi_n(x,j,y),\varphi_n^{(x,i,y)}(i),z).$$

\vspace{3mm}

On $(S\cup\{n\})-Mor(Comp_n^2)$, $\Phi_n$, $\Phi_n^1$, and $\Phi_n^2$ are defined by the wreath products of the permutations such that
\beg{compdiagram}{
\diagram
Comp_n^2\rto^{\Phi_n^1}\dto_{\Phi_n^2}& Comp_n\dto^{\Phi_n}\\
Comp_n\rto_{\Phi_n}& \mathscr{B}_n\\
\enddiagram
}
commutes strictly.
(Note that then the $\{n \}$-morphisms of $\mathscr{B}_n$ are permutations.)

\vspace{5mm}

We will construct the $\mathscr{B}_n$'s inductively.
Suppose we are given $\mathscr{B}_n$ and all of the associated maps.
Then take a cube-like $n$-fold category $\mathscr{C}$ with a fibered $n$-fold functor
$$\mathscr{C}\r \mathscr{B}_n.$$
For $S\subseteq\{1,\dots,n\}$, denote the preimage of an $S$-morphism $x$ by $\mathscr{C}(x)$.
Then let
$$Comp_{\mathscr{C}}=(\mathscr{C}\times\mathscr{C})\times_{(\mathscr{B}_n\times\mathscr{B}_n)}Comp_n$$
$$Comp_{\mathscr{C}}^2=(\mathscr{C}\times\mathscr{C}\times\mathscr{C})\times_{(\mathscr{B}_n\times\mathscr{B}_n\times\mathscr{B}_n)}
Comp_n^2.$$

\vspace{3mm}

\begin{definition}
A cube-like $n$-fold category $\mathscr{C}$ with a fibered $n$-fold functor $\mathscr{C} \rightarrow \mathscr{B}_n$
is called an {\em $n$-actad} provided that there exist $n$-fold functors
$$\Gamma:Comp_{\mathscr{C}}\r \mathscr{C},$$
$$\Gamma^1,\Gamma^2: Comp_{\mathscr{C}}^2\r Comp_{\mathscr{C}},$$
such that the following axioms hold:
\begin{enumerate}
\label{actadcond1}\item
$$\Gamma^1=(\Gamma\times Id_{\mathscr{C}})\times_{Comp_n\times\mathscr{B}_n}\Phi_n^1$$

\vspace{3mm}

\label{actadcond2}\item
$$\Gamma^2=(Id_{\mathscr{C}}\times\Gamma)\times_{\mathscr{B}_n\times Comp_n}\Phi_n^2$$

\vspace{3mm}

\label{actadcond3}\item
$$
\diagram
Comp_{\mathscr{C}}^2\rto^{\Gamma^1}\dto_{\Gamma^2}& Comp_{\mathscr{C}}\dto^{\Gamma}\\
Comp_{\mathscr{C}}\rto_{\Gamma}& \mathscr{C}\\
\enddiagram
$$
strictly commutes.
\end{enumerate}

\end{definition}

\vspace{5mm}

Let $\mathscr{C}_n$ be the category of $n$-actads, and let $\mathscr{D}_n$ be the category of $n$-fold fibered
categories over $\mathscr{B}_n$. Then we have a forgetful fibered functor
$$\mathscr{C}_n\r\mathscr{D}_n.$$
Then the left adjoint to this functor is called a free $n$-actad.

\vspace{5mm}

Suppose that we are given $\mathscr{B}_n$, the $n$-actads and all of the associated maps.

\vspace{3mm}

Suppose $\mathscr{C}$ is an $n$-fold category and $S\subseteq\{1,\dots,n\}$.
Recall that for $\mathbb{I}:S\r \{0,1\}$, we have the source-target map \rref{scrSI}.

We then define the $(n+1)$-base $\mathscr{B}_{n+1}$ as a cube-like $n$-fold category by letting, for $S\subseteq\{1,\dots, n\}$,
$S-Mor(\mathscr{B}_{n+1})$ be the free plain $n$-actad on
$$\mathscr{S}_{\underline{0}}:S-Mor(\mathscr{B}_n)\r B_n,$$
($B_n= Obj(\mathscr{B}_n)$).
One checks readily that this defines an invertible cube-like $n$-fold category. (In fact, because of the invertibility,
$\underline{0}$ could be equivalently replaced by any $\mathbb{I}: S\r\{0,1\}$.)

$\mathscr{B}_{n+1}$ then becomes an invertible cube-like $(n+1)$-fold category by the device described under Definition
\ref{defnTCinduced}, and satisfies \rref{compdiagram} with $n$ replaced by $n+1$.
Then $\Phi_{n+1}$ (see \rref{PhiCompnBn}) on $S$-morphisms with $S\subseteq\{1,\dots, n\}$ is defined by plain $n$-actad
composition of $S$-morphisms in $\mathscr{B}_n$. Again, $\Phi_{n+1}$ on $S\cup\{n+1\}$ are defined by wreath products of permutations (see
the comments under \rref{PhiCompnBn}).

It is tempting to conclude that $\mathscr{B}_{n+1}$ is the free $n$-actad on $\mathscr{B}_n$, but that is false because
for $S\subseteq\{1,\dots, n\}$, not all $S$-morphisms of $\mathscr{B}_n$ can be lifted to $\mathscr{B}_{n+1}$ via
$\mathscr{G}_{n+1}$.
This is due to the fact that additional structure is being introduced, where not all isomorphisms downstairs
will preserve it: for example, for $n=2$, not every $1$-permutation of the prongs of a planar tree
comes from an actual isomorphism of trees.
Thus, $\mathscr{B}_{n+1}$ is, in fact, not a fibered $n$-fold category over $\mathscr{B}_n$
(via $\mathscr{G}_n$), and consequently not an $n$-actad. On the other hand,
$\mathscr{G}_{n+1}:\mathscr{B}_{n+1}\r\mathscr{B}_n$ satisfies
the {\em uniqueness} axiom of a fibered $n$-fold category, and in fact,
the following is true (see also Section \ref{sMonads} below):

\begin{lemma}\label{KanExtensionLemmas2}
Let $\mathscr{U}$ be the forgetful functor from the category
of fibered $n$-fold categories over $\mathscr{B}_n$ to the category of $n$-fold categories over $\mathscr{B}_n$ (and $n$-fold functors).
Let $\mathscr{V}$ be the left adjoint to $\mathscr{U}$ (``left Kan extension"). Then $\bar{\mathscr{B}_{n+1}}=\mathscr{V}\mathscr{B}_{n+1}$ is the free $n$-actad
on $\mathscr{B}_n$ on $Id:\mathscr{B}_n\r\mathscr{B}_n$.
\end{lemma}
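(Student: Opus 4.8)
The plan is a composition-of-adjoints argument. Write $(\mathscr{B}_n,\mathrm{Id})$ for the object $\mathrm{Id}:\mathscr{B}_n\r\mathscr{B}_n$, and introduce an auxiliary notion: a \emph{pre-$n$-actad} is a cube-like $n$-fold category $\mathscr{C}$ with an $n$-fold functor $\mathscr{C}\r\mathscr{B}_n$ and composition functors $\Gamma,\Gamma^1,\Gamma^2$ (on the pullbacks $Comp_{\mathscr{C}}$, $Comp^2_{\mathscr{C}}$ formed exactly as in the definition of an $n$-actad) satisfying the three axioms of an $n$-actad, except that $\mathscr{C}\r\mathscr{B}_n$ is not required to be fibered (in the cases of interest it will still satisfy the uniqueness axiom of a fibered functor, as noted for $\mathscr{B}_{n+1}$ just before the statement). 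Let $\mathscr{P}_n$ be the category of pre-$n$-actads; then $\mathscr{C}_n$ is the full subcategory on those $\mathscr{C}$ whose map to $\mathscr{B}_n$ is fibered, and the functor sending an $n$-actad to its underlying $n$-fold category over $\mathscr{B}_n$ (forgetting $\Gamma$) factors both as $\mathscr{C}_n\xrightarrow{R}\mathscr{D}_n\xrightarrow{\mathscr{U}}(n\text{-fold cats over }\mathscr{B}_n)$ and as $\mathscr{C}_n\hookrightarrow\mathscr{P}_n\xrightarrow{U'}(n\text{-fold cats over }\mathscr{B}_n)$, with $U'$ forgetting $\Gamma$. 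The two facts to establish --- where the work lies --- are: (a) $U'$ has a left adjoint $F'$, with $F'(\mathscr{B}_n,\mathrm{Id})=\mathscr{B}_{n+1}$ as a pre-$n$-actad; and (b) the inclusion $\mathscr{C}_n\hookrightarrow\mathscr{P}_n$ has a left adjoint $\mathscr{V}'$ which preserves underlying $n$-fold categories via $\mathscr{V}$. Granting these, uniqueness of left adjoints gives $L\circ\mathscr{V}\cong\mathscr{V}'\circ F'$ as functors out of $(n\text{-fold cats over }\mathscr{B}_n)$ (both being left adjoint to the functor factored above in two ways, $L$ denoting the free $n$-actad functor, i.e.\ the left adjoint of $R$); evaluating at $(\mathscr{B}_n,\mathrm{Id})$ and using $\mathscr{V}(\mathscr{B}_n,\mathrm{Id})\cong(\mathscr{B}_n,\mathrm{Id})$ --- valid because $\mathscr{U}$ is a full inclusion (it forgets only a property of objects), so the reflection unit there is invertible --- yields $L(\mathscr{B}_n,\mathrm{Id})\cong\mathscr{V}'\mathscr{B}_{n+1}$, whose underlying $n$-fold category over $\mathscr{B}_n$ is $\mathscr{V}\mathscr{B}_{n+1}$. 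This is the assertion, and it incidentally produces the $n$-actad structure on $\mathscr{V}\mathscr{B}_{n+1}$.

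Fact (a) is essentially a repackaging of the construction of $\mathscr{B}_{n+1}$: by definition its set of $S$-morphisms is the free plain $n$-actad on $\mathscr{S}_{\underline{0}}:S\text{-}Mor(\mathscr{B}_n)\r B_n$, so the universal property of the free plain $n$-actad, applied levelwise and assembled over the $S\subseteq\{1,\dots,n\}$ using the cube-like structure, reduces pre-$n$-actad maps $\mathscr{B}_{n+1}\r\mathscr{P}$ over $\mathscr{B}_n$ to $n$-fold functors $\mathscr{B}_n\r\mathscr{P}$ over $\mathscr{B}_n$; the associativity axiom transports correctly because of Lemma \ref{Lemmaplain}, which governs the interaction of iterated $\gamma$-compositions with the shuffle functions $\varphi_n^{(x,i,y)}$ and $\psi_n^{(x,i,y)}$.

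Fact (b) is the step I expect to be the main obstacle. One defines $\mathscr{V}'\mathscr{P}$ to have underlying $n$-fold category $\mathscr{V}(\text{underlying }\mathscr{P})$ with composition functors transported along the unit $\eta_{\mathscr{P}}:\mathscr{P}\r\mathscr{V}\mathscr{P}$. The subtlety is that $Comp_{\mathscr{V}\mathscr{P}}=(\mathscr{V}\mathscr{P}\times\mathscr{V}\mathscr{P})\times_{\mathscr{B}_n\times\mathscr{B}_n}Comp_n$ is a pullback, and a left adjoint need not preserve pullbacks, so it is not automatic that $\Gamma_{\mathscr{P}}$ extends uniquely over $Comp_{\mathscr{V}\mathscr{P}}$. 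I would handle this by noting that the projection $Comp_n\r\mathscr{B}_n\times\mathscr{B}_n$ is itself a fibered $n$-fold functor, so that $\mathscr{V}$ commutes with base change along it; equivalently, one uses the explicit model of $\mathscr{V}$ in which the objects of $\mathscr{V}\mathscr{P}$ are the objects of $\mathscr{P}$ formally transported along morphisms of $\mathscr{B}_n$, modulo the transports already realized inside $\mathscr{P}$, with the $S$-morphisms being the resulting cartesian lifts, and then checks directly that a formal transport of a $\gamma$-composable pair is again $\gamma$-composable, the shuffle functions $\varphi_n^{(x,i,y)}$ and $\psi_n^{(x,i,y)}$ intertwining the transports. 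Once this is in place, $\Gamma$, $\Gamma^1$, $\Gamma^2$ on $\mathscr{V}\mathscr{P}$ are determined, and checking axioms (1)--(3) on the newly adjoined morphisms reduces to the same ``the order of switches does not matter'' bookkeeping carried out in the proof of Lemma \ref{Lemmaplain} (the analogue of the Yang--Baxter relations for $\Sigma_n$), now carrying along the permutations $\sigma_f$ that record the top-level morphisms.

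To finish, one checks that $\mathscr{V}\mathscr{P}$ is genuinely an $n$-actad --- it is cube-like, being fibered over the cube-like category $\mathscr{B}_n$ (part $(\ref{lemmaitem2})$ of the Lemma on fibered functors and cube-likeness), and invertible because cartesian lifts of isomorphisms are isomorphisms --- and that $\mathscr{P}\mapsto\mathscr{V}'\mathscr{P}$ is left adjoint to $\mathscr{C}_n\hookrightarrow\mathscr{P}_n$. The latter follows from the universal property of $\mathscr{V}$ together with the observation that an $n$-actad morphism out of $\mathscr{V}\mathscr{P}$ is the same thing as a $\Gamma$-preserving $n$-fold functor over $\mathscr{B}_n$, which by the extension property just established is detected on the image of $\eta_{\mathscr{P}}$; compatibility of the resulting bijection with composition is then formal.
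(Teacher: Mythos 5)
The paper gives no proof of this lemma --- it is stated as immediate from the construction of $\mathscr{B}_{n+1}$ --- and your two-step factorization (freeness of $\mathscr{B}_{n+1}$ among ``pre-$n$-actads'' via Lemma \ref{Lemmaplain} applied levelwise, followed by the fibered reflection $\mathscr{V}$) is exactly the intended argument, the same pattern the paper uses explicitly in Section \ref{sMonads} where the free algebra is obtained as $CX=K_{\mathscr{B}_{n-1}}C_0X$. Your proposal is correct at (indeed well above) the paper's own level of rigor; the only caution is that in your step (b) you should rely on the explicit cartesian-lift model of $\mathscr{V}$ rather than on the claim that $Comp_n\r\mathscr{B}_n\times\mathscr{B}_n$ is fibered, since with the paper's definitions the $(S\cup\{n\})$-morphisms of $Comp_n$ are restricted to pairs $f\colon x\r x$, $g\colon y\r y$, so arbitrary morphisms out of $(x,y)$ need not lift.
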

\qed

(Note that in the case of $n=1$, $1$-actads are simply operads.)

\vspace{5mm}

\section{Algebras over an $n$-Actad}\label{salg}

\vspace{5mm}

The next thing we will discuss is algebras over an $n$-actad $\mathscr{C}$.
For operads, which are $1$-actads, an algebra over an operad $\mathscr{C}$
is an $X$ with
$$\mathscr{C}(n)\times X^n\r X$$
satisfying the appropriate axioms (associativity, and, if we choose, commutativity in the broader sense
and unitality),
and a module over an operad $\mathscr{C}$ and an algebra $X$
is a $Y$ with
$$\mathscr{C}(n)\times X^{n-1}\times Y\r Y$$
satisfying similar axioms.

\vspace{3mm}

Now, take an $(n-1)$-fold category $X$.
Suppose we also have a fibered $(n-1)$-fold functor
$$\Xi:X\r \mathscr{B}_{n-1}$$
(then, in particular, $X$ is cube-like).
For $S\subseteq\{1,\dots,n-1\}$, $a\in S-Mor(\mathscr{B}_{n-1})$,
let the inverse image of $\Xi$ of $a$ again be called $X(a)$.

Suppose we have $S\subseteq \{1,\dots,n-1\}$.
Then define the extra structure for $X$ needed to be $n$-fold category, by insisting that 
the map
\beg{equivmapidentbij}{(S\cup\{n\}-Mor(X))\longleftarrow^{\hspace{-5mm}Id}\hspace{1mm}(S-Mor(X))}
be a bijection.
With this extra structure, $X$ is an $n$-fold category.

\vspace{3mm}

Let $\mathscr{C}$ be an $n$-actad.
Then, by definition,
we have an $n$-fold functor
$$
\diagram
\mathscr{C}\rto&\mathscr{B}_n\rto^{\mathscr{F}_n}& T_{\mathscr{B}_{n-1}}.
\enddiagram
$$
Also, we have 
$$T_{\Xi}:T_X\r T_{\mathscr{B}_{n-1}}.$$
So we can form the pullback $\mathscr{C}\times_{T_{\mathscr{B}_{n-1}}}T_X$.

\begin{definition}

$X$ is called an algebra over $\mathscr{C}$ provided that the following holds:
There exists an $(n-1)$-fold functor 
\beg{ThetaCxTXX}{\Theta:\mathscr{C}\times_{T_{\mathscr{B}_{n-1}}}T_X\r X,}
such that the diagram
$$
\diagram
\mathscr{C}\times_{T_{\mathscr{B}_{n-1}}}T_X\rto^(.65){\Theta}\dto_{\pi}& X\ddto^{\Xi}\\
\mathscr{C}\dto& \\
\mathscr{B}_n\rto_{\mathscr{G}_n}& \mathscr{B}_{n-1},\\
\enddiagram
$$
commutes where $\pi$ is the projection, and the map $\mathscr{C}\r\mathscr{B}_n$ is the fibered $n$-functor
given by definition. Note that though some of the functors in this diagram are $n$-fold functors, this diagram
is of $(n-1)$-fold functors. We require the following conditions to hold:

\vspace{3mm}

\begin{enumerate}
\item Equivariance:

\vspace{3mm}

\noindent 
By the above comment on the $n$-fold categorical structure of $X$,
We have that $\mathscr{C}\times_{T_{\mathscr{B}_{n-1}}}T_X$ is already an $n$-fold category.
Then we require \rref{ThetaCxTXX} to be an $n$-fold functor.

\vspace{5mm}

\item \label{algassoc1}
Associativity:

\vspace{3mm}

\noindent First, fix $x,y\in S-Mor(\mathscr{B}_n)$, $S\subseteq\{1,\dots, n-1\}$, $i\in\{1,\dots,m_x\}$, with
$f_i(x)=\mathscr{G}_n(y)$. Or, in other words, $(x,i,y)\in Comp_n$.
Then write
$$x_j=f_j(x),$$
$$y_k=f_k(y).$$

Suppose we have 
$$\xi\in\mathscr{C}(x)\times \mathscr{C}(y)\times \{(x,i,y)\}.$$
Recall that elements of $\mathscr{C}(x)$ are $S$-morphisms of $\mathscr{B}_n$ over $x$.
Then define
$$\Omega(\xi)=(z_1,\dots,z_{m_x+m_y-1})$$
by 
$$z_{\phi_n^{(x,i,y)}(j)}=x_j,$$
$$z_{\psi_n^{(x,i,y)}(k)}=y_k.$$
Then 
$$\Omega:\mathscr{C}(x)\times\mathscr{C}(y)\times \{(x,i,y)\}\r T_{\mathscr{B}_{n-1}}.$$
We also have 
$$T_\Xi:T_X\r T_{\mathscr{B}_{n-1}}.$$
So, we can write
$$(\mathscr{C}(x)\times \mathscr{C}(y)\times \{(x,i,y)\})\times_{T_{\mathscr{B}_{n-1}}} T_X.$$

Then define 
$$\widetilde{\Theta}_{(x,i,y)}:(\mathscr{C}(x)\times \mathscr{C}(y)\times\{(x,i,y)\})\times_{T_{\mathscr{B}_{n-1}}} T_X\r$$
$$\r\mathscr{C}(x)\times_{T_{\mathscr{B}_{n-1}}}\prod_{1\leq j\leq m_x}X(x_j)$$
by 
\beg{Thetatildesalg}{\begin{array}{c}
\widetilde{\Theta}_{(x,i,y)} = (\varpi,
Id_{\mathscr{C}(x) \times \prod_{1\leq j \leq m_x, \; j\neq i} X(x_j)}\times\\
\Theta\;|_{\mathscr{C}(y) \times_{T_{\mathscr{B}_{n-1}}}\prod_{i\leq k \leq m_y} X(y_k))}.
\end{array}}
where, again,
$$\varpi:(\mathscr{C}(x)\times\mathscr{C}(y)\times\{(x,i,y)\})\times_{T_{\mathscr{B}_{n-1}}} T_X\r\mathscr{C}(x)$$
is the projection to the first coordinate.

Now, let
$$\widetilde{\Theta}=\coprod_{(x,i,y)\in Comp_n}\widetilde{\Theta}_{(x,i,y)}.$$
Then, since
$$Comp_{\mathscr{C}}=(\mathscr{C}\times\mathscr{C})\times_{(\mathscr{B}_n\times\mathscr{B}_n)}Comp_n,$$
we have
$$\widetilde{\Theta}:Comp_{\mathscr{C}}\times_{T_{\mathscr{B}_{n-1}}}T_X\r\mathscr{C}\times_{T_{\mathscr{B}_{n-1}}} T_X.$$

Then
\beg{assoccond4alg}{
\diagram
Comp_{\mathscr{C}}\times_{T_{\mathscr{B}_{n-1}}}T_X\rto^(.55){\Gamma\times Id}\dto_{\widetilde{\Theta}}& \mathscr{C}\times_{T_{\mathscr{B}_{n-1}}}T_X\dto^{\Theta} \\
\mathscr{C}\times_{T_{\mathscr{B}_{n-1}}}T_X\rto_(.55){\Theta}& X\\
\enddiagram
}
must strictly commute.
Actually, $\widetilde{\Theta}$ can be extended uniquely and naturally to $S\cup\{n\}$-morphisms, and \rref{assoccond4alg} is automatically
a diagram of $n$-fold functors.

\end{enumerate}

\end{definition}

Again, at $n=1$, this simply corresponds to the concept of algebras over an operad.

\vspace{5mm}

\noindent {\bf Examples:}
1: 
Let $\mathscr{A}_n$ be the $n$-actad given by taking, for $S\subseteq\{1,\dots,n-1\}$,
$$S-Mor(\mathscr{A}_n)=(S\cup\{n\})-Mor(\mathscr{B}_n),$$
and $(S\cup\{n\})-Mor(\mathscr{A}_n)$ is the $n$-composable pairs of $(S\cup\{n\})-Mor(\mathscr{B}_n)$,
i.e. the pairs 
$$(\sigma,\tau)\in((S\cup\{n\})-Mor(\mathscr{B}_n))^2$$
such that $S_n \tau=T_n \sigma \in S-Mor(\mathscr{B}_n)$, where $S_n$, $T_n$ denote the $n$-source and $n$-target
of an $S\cup \{n\}$-morphism.
Then there is an equivalence of categories between $\mathscr{A}_n$-algebras and $(n-1)$-actads.
More specific examples of $\mathscr{A}_n$-algebras will be given in later sections.

2: Note that, of course, $\mathscr{B}_n$ is an $n$-actad via $Id:\mathscr{B}_n\r\mathscr{B}_n$. We call
$\mathscr{B}_n$-algebras {\em$n$-commutative $(n-1)$-actads}, (in the narrower sense).

\vspace{5mm}

\section{Modules of Algebras over $n$-Actads}\label{smod}

\vspace{5mm}

Suppose $X,M$ are $(n-1)$-fold categories
and we have fibered $(n-1)$-fold functors
$$\Xi_X:X\r\mathscr{B}_{n-1}$$
$$\Xi_M:M\r\mathscr{B}_{n-1}.$$
Then let, again, $X(x)$ and $M(x)$ denote the inverse images under $\Xi_X$ and $\Xi_M$ of $x\in\mathscr{B}_{n-1}$ respectively.
For $S\subseteq\{1,\dots,n-1\}$, define $T_{X,M}$ by
\beg{TXMdefnalgmon}{\begin{array}{c}
S-Mor(T_{X,M})=\{((\iota,k),(x_1,\dots,x_{\iota -1},m, x_{\iota +1},\dots,x_k))\mid\\
1\leq \iota\leq k \in\N,\\
x_j\in S-Mor(X), m\in S-Mor(M)\},\\
\end{array}}

\vspace{3mm}

$$\begin{array}{c}
(S\cup\{n\})-Mor(T_{X,M})=\{((\sigma,\iota,k),(x_1,\dots,x_{\iota-1},m,x_{\iota+1},\dots, x_k))\mid\\
1\leq \iota\leq k\in\N\\
x_j\in S-Mor(X),m\in S-Mor(M), \sigma\in \Sigma_k\}.\\
\end{array}$$

For an $\eta=((\iota,k),(x_1,\dots,x_{\iota-1},m,x_{\iota+1},\dots,x_k))\in S-Mor(T_{X,M})$, write
$\iota_{\eta}=\iota$.

\vspace{5mm}

\begin{definition}

Suppose $\mathscr{C}$ is an $n$-actad and $X$ is a $\mathscr{C}$-algebra.
Then $M$ is a {\em $(\mathscr{C},X)$-module} provided that the following holds:
There exists an $(n-1)$-fold functor
\beg{TXMThetaCxTBM}{\Theta:\mathscr{C}\times_{T_{\mathscr{B}_{n-1}}}T_{X,M}\r M}
such that
$$
\diagram
\mathscr{C}\times_{T_{\mathscr{B}_{n-1}}}T_{X,M}\rto^(.65){\Theta}\dto_{\pi}& M\ddto^{\Xi_M}\\
\mathscr{C}\dto& \\
\mathscr{B}_n\rto_{\mathscr{G}}& \mathscr{B}_{n-1},\\
\enddiagram
$$
where $\pi$ is the projection, and the map $\mathscr{C}\r\mathscr{B}_n$ is the $n$-functor given by the definition.
Note that though some of the functors in this diagram are $n$-fold functors, this diagram is of $(n-1)$-fold functors.
We also require the following conditions to hold:
\begin{enumerate}
\item

Equivariance:

\vspace{3mm}

\noindent Suppose we have $S\subseteq \{1,\dots,n-1\}$.
Then define the extra structure for $M$ needed to be $n$-fold category, by insisting that 
the map
$$(S\cup\{n\}-Mor(M))\longleftarrow^{\hspace{-5mm}Id}\hspace{1mm}(S-Mor(M))$$
be an isomorphism.
With this extra structure, $M$ is an $n$-fold category.
We have that $\mathscr{C}\times_{T_{\mathscr{B}_{n-1}}}T_{X,M}$ is already an $n$-fold category.
Then \rref{TXMThetaCxTBM}
must be an $n$-fold functor.

\vspace{5mm}

\item 
Associativity:

\vspace{3mm}

\noindent We will use the symbols $x,y,S,i,x_j,y_k,\Omega$ in the same way as in Axiom $($\ref{algassoc1}$)$ in Section \ref{salg}.
We have $$T_{\Xi_X}:T_X\r T_{\mathscr{B}_{n-1}},$$
We define
$$\Upsilon:T_{X,M}\r T_{\mathscr{B}_{n-1}}$$
by
$$\Upsilon(x_1,\dots, x_{i-1}, m, x_{i+1},\dots, x_k)=$$
$$=(\Xi_X(x_1),\dots,\Xi_X(x_{i-1}), \Xi_M(m),\Xi_X(x_{i+1}),\dots,\Xi_X(x_k)).$$
So, we can write
$$(\mathscr{C}(x)\times\mathscr{C}(y)\times\{(x,i,y)\})\times_{T_{\mathscr{B}_{n-1}}} T_{X,M}.$$
Then define 
$$\widetilde{\Theta}_{(x,i,y)}:(\mathscr{C}(x)\times\mathscr{C}(y)\times \{(x,i,y)\})\times_{T_{\mathscr{B}_{n-1}}} T_{X,M}\r$$
$$\r \mathscr{C}(x)\times_{T_{\mathscr{B}_{n-1}}}\prod_{1\leq j\leq m_x} M(x_j)$$
by 
$$\begin{array}{c}
\widetilde{\Theta}_{(x,i,y)} = (\varpi,
Id_{\mathscr{C}(x) \times \prod_{1\leq j \leq m_x, \; j\neq i} XM_j(x_j)}\times\\
\Theta\;|_{\mathscr{C}(y) \times_{T_{\mathscr{B}_{n-1}}}\prod_{i\leq k \leq m_y} XM_k(y_k))}.
\end{array}
$$
where, for $\eta\in T_{X,M}$ on $\mathscr{C}(x)\times\mathscr{C}(y)\times\{(x,i,y)\}\times_{T_{\mathscr{B}_{n-1}}}\{\eta\}$,
$$\begin{array}{c}
XM_j=X \text{ for } j\neq \iota\\
\hspace{10mm} =M \text{ for } j=\iota .\\
\end{array}$$

Then
\beg{assoccond4modu}{
\diagram
Comp_{\mathscr{C}}\times_{T_{\mathscr{B}_{n-1}}}T_{X,M}\rto^(.55){\Gamma\times Id}\dto_{\widetilde{\Theta}}& \mathscr{C}\times_{T_{\mathscr{B}_{n-1}}}T_{X,M}\dto^{\Theta} \\
\mathscr{C}\times_{T_{\mathscr{B}_{n-1}}}T_{X,M}\rto_(.55){\Theta}& M\\
\enddiagram
}
must strictly commute.
\end{enumerate}
\end{definition}

We can also define a concept of a $(\mathscr{C}, X)$-algebra $M$ analogously by modifying the definition so as to allow
multiple $m_j$'s in \rref{TXMdefnalgmon}.

\vspace{5mm}

\section{Iterated Algebras}\label{siter}

\vspace{5mm}

There is a concept of an {\em $I$-iterated algebra} over an $n$-actad for any subset $I\in\{0,\dots,n\}$, of which the concepts of algebra and
algebra with a module introduced in the last two sections are special cases for $I=\{n\}$ and $I=\{0,n\}$, respectively.
We will use these constructions later to define unital actads and $R$-units.
An $I$-iterated algebra consists of $|I|$ cube-like $(n-1)$-fold categories, with rules on where they can be ``plugged in into an element of a given
$n$-actad." The ways of plugging in the $\alpha$'th model, for an $\alpha\in I$, follow, roughly, the pattern of an $\alpha$-actad algebra.
However, the precise axiomatization is delicate, in particular, on morphisms.
The only axiomatization I could work out uses multisorted algebras \cite{multi}. One must
also design an appropriate multisorted version of the $n$-base, to keep track of the structure describing how the different models are being
plugged in.

In the below picture, I illustrate how the $0$-, $1$-, and $2$-models are plugged in in the composition for the $0$-, $1$-, and $2$-models
of a $\{0,1,2\}$-iterated algebra over a $2$-actad.

\vspace{10mm}

$$\alpha_1=0 \hspace{30mm}
\alpha_2=1 \hspace{30mm}
\alpha_3=2$$
\beg{IteratedExamplePicture}{\begin{tikzpicture}
\draw(0,1.5)--(0,2);
\draw(0,2)--(0.5,3);
\draw(0.5,3)--(1,2);
\draw(1,1.5)--(1,2);
\draw(0,2)--(1,2);
\draw(0.5,3)--(0.5,3.5);
\draw(2,3.5)--(2,3);
\draw(1.25,3.5)--(1.25,3);
\draw(0.5,3.5)--(2,3.5);
\draw(0.5,3.5)--(1.25,5);
\draw(1.25,5)--(2,3.5);
\draw(2,3)--(1.25,1.5);
\draw(2.75,1.5)--(2,3);
\draw(2.75,1.5)--(1.25,1.5);
\draw(2.75,1.5)--(2.75,1);
\draw(1.25,1.5)--(1.25,1);
\draw(2,1.5)--(2,1);
\draw(1.25,1)--(0.75,0);
\draw(1.25,1)--(1.75,0);
\draw(0.75,0)--(1.75,0);
\draw(0.75,0)--(0.75,-0.5);
\draw(1.75,0)--(1.75,-0.5);
\draw(2.75,1)--(2.25,0);
\draw(2.75,1)--(3.25,0);
\draw(2.25,0)--(3.25,0);
\draw(3.25,0)--(3.25,-0.5);
\draw(2.25,0)--(2.25,-0.5);
\draw (0,2) -- node[above] {0} ++(1,0);
\draw (0.5,3.5) -- node[above] {0} ++(1.5,0);
\draw (1.5,1.5) -- node[above] {0} ++(1,0);
\draw (0.75,0) -- node[above] {0} ++(1,0);
\draw (2.25,0) -- node[above] {0} ++(1,0);

\draw(4,1.5)--(4,2);
\draw(4,2)--(4.5,3);
\draw(4.5,3)--(5,2);
\draw(5,1.5)--(5,2);
\draw(4,2)--(5,2);
\draw(4.5,3)--(4.5,3.5);
\draw(6,3.5)--(6,3);
\draw(5.25,3.5)--(5.25,3);
\draw(4.5,3.5)--(6,3.5);
\draw(4.5,3.5)--(5.25,5);
\draw(5.25,5)--(6,3.5);
\draw(6,3)--(5.25,1.5);
\draw(6.75,1.5)--(6,3);
\draw(6.75,1.5)--(5.25,1.5);
\draw(6.75,1.5)--(6.75,1);
\draw(5.25,1.5)--(5.25,1);
\draw(6,1.5)--(6,1);
\draw(5.25,1)--(4.75,0);
\draw(5.25,1)--(5.75,0);
\draw(4.75,0)--(5.75,0);
\draw(4.75,0)--(4.75,-0.5);
\draw(5.75,0)--(5.75,-0.5);
\draw(6.75,1)--(6.25,0);
\draw(6.75,1)--(7.25,0);
\draw(6.25,0)--(7.25,0);
\draw(7.25,0)--(7.25,-0.5);
\draw(6.25,0)--(6.25,-0.5);
\draw (4,2) -- node[above] {0} ++(1,0);
\draw (4.5,3.5) -- node[above] {1} ++(1.5,0);
\draw (5.5,1.5) -- node[above] {1} ++(1,0);
\draw (4.75,0) -- node[above] {1} ++(1,0);
\draw (6.25,0) -- node[above] {0} ++(1,0);

\draw [fill] (6,3) circle [radius=0.1];
\draw [fill] (5.25,1) circle [radius=0.1];
\draw [fill] (5.75,-0.5) circle [radius=0.1];

\draw(8,1.5)--(8,2);
\draw(8,2)--(8.5,3);
\draw(8.5,3)--(9,2);
\draw(9,1.5)--(9,2);
\draw(8,2)--(9,2);
\draw(8.5,3)--(8.5,3.5);
\draw(10,3.5)--(10,3);
\draw(9.25,3.5)--(9.25,3);
\draw(8.5,3.5)--(10,3.5);
\draw(8.5,3.5)--(9.25,5);
\draw(9.25,5)--(10,3.5);
\draw(10,3)--(9.25,1.5);
\draw(10.75,1.5)--(10,3);
\draw(10.75,1.5)--(9.25,1.5);
\draw(10.75,1.5)--(10.75,1);
\draw(9.25,1.5)--(9.25,1);
\draw(10,1.5)--(10,1);
\draw(9.25,1)--(8.75,0);
\draw(9.25,1)--(9.75,0);
\draw(8.75,0)--(9.75,0);
\draw(8.75,0)--(8.75,-0.5);
\draw(9.75,0)--(9.75,-0.5);
\draw(10.75,1)--(10.25,0);
\draw(10.75,1)--(11.25,0);
\draw(10.25,0)--(11.25,0);
\draw(11.25,0)--(11.25,-0.5);
\draw(10.25,0)--(10.25,-0.5);
\draw (8,2) -- node[above] {0} ++(1,0);
\draw (8.5,3.5) -- node[above] {1} ++(1.5,0);
\draw (9.5,1.5) -- node[above] {2} ++(1,0);
\draw (8.75,0) -- node[above] {1} ++(1,0);
\draw (10.25,0) -- node[above] {0} ++(1,0);

\draw [fill] (10,3) circle [radius=0.1];
\draw [fill] (9.25,1) circle [radius=0.1];
\draw [fill] (9.75,-0.5) circle [radius=0.1];

\end{tikzpicture}}
\vspace{5mm}

\noindent(the solid nodes mark the places where trees that will be labeled a number $\geq 0$ are composed).

\vspace{10mm}

We shall start with the concept of the {\em $I$-pointed $n$-base} $\mathscr{B}_n^I$, a cube-like $n$-fold category, which will be defined inductively.

We start the induction with $\mathscr{B}_0^{\{0 \}} = \mathscr{B}_0^\emptyset = \mathscr{B}_0$.
For $k\geq 0$, $0\leq \alpha_1<\dots <\alpha_k\leq n$, $I=\{\alpha_1,\dots,\alpha_k\}$,
$\mathscr{B}_n^I=\mathscr{B}_n^{\alpha_1,\dots,\alpha_k}$ is an $n$-fold category with a $n$-fold functor
$$\mathscr{B}_n^{\alpha_1, \dots , \alpha_k} \rightarrow \mathscr{B}_n$$
and with some additional maps.
First, $\mathscr{B}_n^{\alpha_1 ,\dots ,\alpha_k}$ comes with additional data of $n$-fold functor
$$\wp_n:\mathscr{B}_n^{\alpha_1,\dots,\alpha_k}\r\{1,\dots,k\}$$
as part of the induction data, where on the right hand side, all morphisms are identities.
We shall also write
$$\mathscr{B}_n^I(i)=\mathscr{B}_n^{\alpha_1,\dots,\alpha_k} (i)=\wp_n^{-1}(i).$$

We will construct $\mathscr{B}_n^{\alpha_1,\dots,\alpha_k}$ (and the associated structure)
inductively on $n$. The induction step from $(n-1)$
to $n$ depends in a crucial way on whether $\alpha_k<n$ or $\alpha_k=n$.
Essentially, if $\alpha_k<n$, the induction step from $\mathscr{B}_{n-1}^{\alpha_1,\dots,\alpha_k}$ is similar as for $\mathscr{B}_n$. If $\alpha_k=n$,
the induction step to $\mathscr{B}_n^{\alpha_1,\dots , \alpha_k}$
will be from $\mathscr{B}_{n-1}^{\alpha_1,\dots,\alpha_{k-1}}$ to $\mathscr{B}_n^{\alpha_1,\dots,\alpha_k}$. 
The rules for plugging in the new model are essentially to plug into one of the places of the previous one, but the
$(S\cup\{n\})$-morphisms must remember their positions.

\vspace{3mm}

In more detail, if $\alpha_k< n$, then
we have an $(n-1)$-fold functor over $\{1,\dots, k\}$
$$
\diagram
\mathscr{F}_n^{\alpha_1,\dots,\alpha_k}:\mathscr{B}_n^{\alpha_1,\dots,\alpha_k}\rto\drto& T_{\mathscr{B}_{n-1}^{\alpha_1,\dots,\alpha_k}}\dto\\
 &\{1,\dots,k\},\\
\enddiagram$$
where in the vertical arrow, an $S$-morphism $(x_1,\dots,x_m)$ with $S\subseteq\{1,\dots, n-1\}$ is mapped to
\beg{maxwpxi}{\max_{i\in\{1,\dots,m\}} \wp_{n-1}(x_i).}
Then, like in the case of $n$-bases, $\mathscr{F}_n^{\alpha_1,\dots,\alpha_k}$ induces an $n$-fold category structure on  $\mathscr{B}_n^{\alpha_1,\dots,\alpha_k}$, and thus becomes an $n$-fold functor.
Again, we also will have an $(n-1)$-fold functor
$$\mathscr{G}_n:\mathscr{B}_n^{\alpha_1,\dots,\alpha_k}\r \mathscr{B}_{n-1}^{\alpha_1,\dots,\alpha_k}$$
as part of the induction data.

\vspace{3mm}

Definitions of $Comp_n^{\alpha_1,\dots,\alpha_k}$ and $Comp_n^{2;\alpha_1,\dots,\alpha_k}$ then proceed
precisely analogously to the definitions of $Comp_n$ and $Comp_n^2$ in Section \ref{s2}, except for the fact that we are working
over $\{1,\dots,k\}$. The analogue of Diagram \rref{compdiagram} becomes a diagram of the form
\beg{Compdiagramiter}{
\diagram
Comp_n^{2;\alpha_1,\dots,\alpha_k}\dto_{\Phi_n^{2;\alpha_1,\dots,\alpha_k}}\rto^(0.6){\Phi_n^{1;\alpha_1,\dots,\alpha_k}} 
&Comp_n^{\alpha_1,\dots,\alpha_k}\dto\\
Comp_n^{\alpha_1,\dots,\alpha_k}\rto& \mathscr{B}_n^{\alpha_1,\dots,\alpha_k}\\
\enddiagram}
over $\{1,\dots,k\}$.

\vspace{3mm}

Now, suppose $\alpha_k=n$.
This case must be treated in more detail. Define a map
$$\varpi:\{1,\dots,k\}\r\{1,\dots,k-1\}$$
by
$$\varpi(i)=i, \text{ for}\; i<k$$
$$\varpi(k)=k-1.$$

\vspace{3mm}

\begin{definition}\label{DefinitionPointedTMultisorted}
Suppose $\mathscr{C}\r\{1,\dots,k-1\}$ is a $(n-1)$-fold category over $\{1,\dots,k-1\}$.
Then define an $n$-fold category $T^{\bullet}_{\mathscr{C}}\r\{1,\dots,k\}$ over $\{1,\dots,k\}$ as follows:
For $i\in\{1,\dots, k-1\}$,
$$T_{\mathscr{C}}^\bullet(i)=T_{\mathscr{C}(i)}.$$
For $S\subseteq \{1,\dots,n-1\}$,
$$S-Mor(T^{\bullet}_{\mathscr{C}}(k))=$$
$$\{((x_1,\dots,x_{\ell}),b)|\ell\geq 1, \; b\in\{1,\dots,\ell\}, \; \forall j\; \exists i_j \in \{1, \dots, k-1\}$$
$$\text{such that } x_j\in S-Mor(\mathscr{C}(i_j)), \; i_b=k-1,\; \}$$

\vspace{3mm}

$$(S\cup\{n\})-Mor(T^{\bullet}_{\mathscr{C}}(k))=$$
$$\{f=(((x_1,\dots,x_{\ell}),b),\sigma, (y_1,\dots,y_{\ell}))$$
$$:((x_1,\dots,x_{\ell}),b)\r((y_1,\dots,y_{\ell}),\sigma(b))| \ell\geq 1, $$
$$b\in\{1,\dots,\ell\}, \sigma:\{1,\dots,\ell\}\r\{1,\dots,\ell\} \text{ a permutation},$$
$$\sigma(b)=b, \forall i \; y_i=x_{\sigma(i)}\}.$$
\end{definition}

We have an obvious forgetful $n$-fold functor
$$\diagram
T_{\mathscr{C}}^\bullet\dto\rto& T_\mathscr{C}\dto\\
\{1,\dots,k\}\rto^\varpi&\{1,\dots, k-1\}\\
\enddiagram$$
forgetting $b$.

\vspace{3mm}

Then we are also given a $(n-1)$-fold functor
\beg{FNIInductionDataaddingtoset}{\mathscr{F}_n^I:\mathscr{B}_n^I = \mathscr{B}_n^{\alpha_1,\dots , \alpha_k}
\r T^{\bullet}_{\mathscr{B}_{n-1}^{\alpha_1,\dots,\alpha_{k-1}}}}
as part of the induction data.
Like in the previous cases,
$\mathscr{F}_n^I$ becomes an $n$-fold functor.
In addition, like in the previous cases,
we have a $(n-1)$-fold functor
$$\mathscr{G}_n^I:\mathscr{B}_n^I\r\mathscr{B}^{\alpha_1,\dots,\alpha_{k-1}}_{n-1},$$
with the following diagram
$$
\diagram
\mathscr{B}_{n}^I\rto^{\mathscr{G}_n}\dto&\mathscr{B}_{n-1}^{\alpha_1,\dots,\alpha_{k-1}}\dto\\
\{1,\dots,k\}\rto^{\varpi}&\{1,\dots,k-1\}.\\
\enddiagram
$$

\vspace{3mm}

Then for $S\subseteq \{1,\dots, n-1\}$, $x\in S-Mor(\mathscr{B}_n^I)$, denote
$$\mathscr{F}_n^I(x)=:(f_1(x),\dots,f_{m_x}(x)).$$

Now, we will use these functors to define the $n$-fold categories $Comp_n^{\alpha_1,\dots,\alpha_k}$, $Comp_n^{2,\alpha_1,\dots,\alpha_k}$
similarly to the case of $n$-bases:
For $S\subseteq \{1,\dots,n-1\}$, define

$$S-Mor(Comp_n^{\alpha_1,\dots,\alpha_k})=$$
$$\{(x,i,y)| x, y\in S-Mor(\mathscr{B}_n^{\alpha_1,\dots,\alpha_k}), 1\leq i\leq m_x, \mathscr{G}_n(y)=f_i(x)\}$$

\vspace{1.5mm}

$$(S\cup\{n\})-Mor(Comp_n^{\alpha_1,\dots,\alpha_k})=\{((x,i,y)\r(x,i',y), f, g)|1\leq i\leq m_x,$$
$$x,y\in S-Mor(\mathscr{B}_n^{\alpha_1,\dots,\alpha_k}), \mathscr{G}_n(y)=f_i(x),$$
$$f:x\r x,\; g:y\r y,\; f,g\in (S\cup\{n\})-Mor(\mathscr{B}_n), i'=\sigma_f(i)\}.$$

\vspace{3mm}

$$S-Mor(Comp^{2,\alpha_1,\dots,\alpha_k}_n)=\{(x,i,j,y,z)|x,y,z\in S-Mor(\mathscr{B}_n^{\alpha_1,\dots,\alpha_k}),$$
$$1\leq i\neq j\leq m_x, \mathscr{G}_n(y)=f_i(x), \mathscr{G}_n (z)=f_j(x)\}.$$

\vspace{1.5mm}

$$(S\cup\{n\})-Mor(Comp_n^{2,\alpha_1,\dots,\alpha_k})=\{((x,i,j,y,z)\r(x,i',j',y,z),f,g,h)|$$
$$1\leq i\neq j\leq m_x,$$
$$x,y,z\in S-Mor(\mathscr{B}_n^{\alpha_1,\dots,\alpha_k}), \mathscr{G}_n(y)=f_i(x), \mathscr{G}_n(z)=f_j(x),$$
$$f:x\r x,\; g:y\r y,\; h:z\r z,\; f,g,h\in S\cup\{n\}-Mor(\mathscr{B}_n),$$
$$i'=\sigma_f(i), j'=\sigma_f(j)\}.$$

Then define maps
$$\Pi_1:Comp_n^{\alpha_1,\dots,\alpha_k}\r \{1,\dots,k\}$$
$$\Pi_2:Comp^{2,\alpha_1,\dots,\alpha_k}\r \{1,\dots,k\}$$
by $$\Pi_1(x,i,y)=\varpi(x)$$
$$\Pi_2(x,i,j,y,z)=\varpi(x).$$

Also, like before, we have
$$Comp_n^{\alpha_1,\dots,\alpha_k}\r \mathscr{B}_n^{\alpha_1,\dots,\alpha_k}\times \mathscr{B}_n^{\alpha_1,\dots,\alpha_k}$$
$$Comp_n^{2,\alpha_1,\dots,\alpha_k}\r \mathscr{B}_n^{\alpha_1,\dots,\alpha_k}\times\mathscr{B}_n^{\alpha_1,\dots,\alpha_k}\times
\mathscr{B}_n^{\alpha_1,\dots,\alpha_k}$$
$$\varphi_n^{(x,i,y)}:\{1,\dots,m_x\}\r \{1,\dots, m_x+m_y-1\}$$
$$\Phi_n:Comp_n^{\alpha_1,\dots,\alpha_k}\r \mathscr{B}_n^{\alpha_1, \dots , \alpha_k},$$
and
$$\Phi_n^1,\Phi_n^2:Comp_n^{2,\alpha_1,\dots,\alpha_k}\r Comp_n^{\alpha_1,\dots,\alpha_k},$$
where the diagram analogous to \rref{Compdiagramiter} commutes.

\vspace{5mm}

Then suppose we are given the $\mathscr{B}_n^{\alpha_1,\dots,\alpha_k}$ and all of their associated structure described above.

To construct, for $\{\alpha_1,\dots,\alpha_k\}\subseteq\{0,\dots,n\}$, $\mathscr{B}_{n+1}^{\alpha_1,\dots,\alpha_k}$
from $\mathscr{B}_n^{\alpha_1,\dots,\alpha_k}$, we can proceed precisely analogously to the end of Section \ref{s2}:
For $S\subseteq\{1,\dots,n\}$, $S-Mor(\mathscr{B}_{n+1}^{\alpha_1,\dots,\alpha_k})$ is
the $(n+1)$-dimensional free plain actad sorted over $\{1,\dots,k\}$
(i.e. plain actad in the category of sets over $\{1,\dots,k\}$) on
$$\mathscr{S}_{\underline{0}}:S-Mor(\mathscr{B}_n^{\alpha_1,\dots,\alpha_k})\r B_n\times\{1,\dots,k\}.$$
Again, this is an invertible cube-like category and $\underline{0}$ could be equivalently replaced
by any $\mathbb{I}:S\r\{0,1\}$.
We let 
$$\mathscr{B}_{n+1}^{\alpha_1,\dots,\alpha_k,n+1}=\mathscr{B}_{n+1}^{\alpha_1,\dots,\alpha_k}\times_{T_{\mathscr{B}_n^{\alpha_1,\dots,\alpha_k}}} T^{\bullet}_{\mathscr{B}_n^{\alpha_1,\dots,\alpha_k}}.$$
(In particular, by definition, $\mathscr{B}_n^0 = \mathscr{B}_n$ for all $n$).

\vspace{5mm}

Now, we will define iterated algebras.
Let $\mathscr{C}$ be an $n$-actad. Let $0\leq\alpha_1<\dots<\alpha_k\leq n$. Let
$$\mathscr{C}^{\alpha_1,\dots,\alpha_k}=\mathscr{C}\times_{\mathscr{B}_n}\mathscr{B}_n^{\alpha_1,\dots,\alpha_k}.$$
$$Comp_{\mathscr{C}}^{\alpha_1,\dots,\alpha_k}=(\mathscr{C}\times\mathscr{C})\times_{(\mathscr{B}_n\times\mathscr{B}_n)}Comp_n^{\alpha_1,\dots,\alpha_k}$$
$$Comp_{\mathscr{C}}^{2;\alpha_1,\dots,\alpha_k}=(\mathscr{C}\times\mathscr{C}\times\mathscr{C})\times_{(\mathscr{B}_n\times\mathscr{B}_n\times\mathscr{B}_n)}Comp_n^{2;\alpha_1,\dots,\alpha_k}.$$
(Note that we have a forgetful $n$-fold functor $\mathscr{B}_n^{\alpha_1,\dots,\alpha_k}\r\mathscr{B}_n$.)

\begin{definition}
Suppose $\mathscr{D}$ is an $(n-1)$-fold category  over $\mathscr{B}_{n-1}\times\{1,\dots,k\}$. Then, for $i=1,\dots k$, let
$\mathscr{D}(i)$ be the sub-$(n-1)$-fold categories that are fibered over $\mathscr{B}_{n-1}\times \{i\}$.
Then let $T_{\mathscr{D}}^k$ be the $n$-fold category with
$$S-Mor(T_{\mathscr{D}}^k)=\{(x_1,\dots,x_k)|\forall j\; \exists i_j \in \{1,\dots , k-1\}$$ 
$$\text{such that } x_j\in S-Mor(\mathscr{D}(i_j)\}$$

\vspace{3mm}

$$S\cup\{n\}-Mor(T_{\mathscr{D}}^k)=\{f=((x_1,\dots,x_k),\sigma,(y_1,\dots,y_k))$$
$$:(x_1,\dots,x_k)\r(y_1,\dots,y_{k})|$$
$$\sigma:\{1,\dots,k\}\r\{1,\dots,k\} \text{ a permutation,}$$
$$\forall j\;\; x_j\in S-Mor(\mathscr{D}(i_j)), \; y_j\in S-Mor(\mathscr{D}(\sigma (i_j))), \text{ and } y_j=x_{\sigma(i)}\}$$
for $S\subseteq \{1,\dots,k-1\}$.
\end{definition}

An iterated $(\alpha_1,\dots,\alpha_k)-\mathscr{C}$ algebra is a $k$-tuple of fibered $(n-1)$-fold categories
$X=(X_1,\dots,X_k)$ over $\mathscr{B}_{n-1}$ together with an $(n-1)$-fold functor
$$\Theta :\mathscr{C}^{\alpha_1,\dots,\alpha_k}\times_{T_{\mathscr{B}_{n-1}\times\{1,\dots,k\}}^k}T_X^k\r X$$
with
$$\diagram
\mathscr{C}^{\alpha_1,\dots,\alpha_k}\times_{T_{\mathscr{B}_{n-1}\times\{1,\dots,k\}}^k}T_X^k\rto^(.725){\Theta}\drto& X\dto\\
 &\{1,\dots,k\}.\\
\enddiagram
$$
Again, there are equivariance and associativity axioms. Equivariance, again, says that $\Theta$ is an $n$-fold functor if we let \rref{equivmapidentbij}
be a bijection. Associativity is expressed by the commutativity of a diagram of $n$-fold:
$$
\diagram
Comp_{\mathscr{C}}^{\alpha_1,\dots,\alpha_k}\times_{T_{\mathscr{B}_{n-1}\times\{1,\dots,k\}}^k}T_X^k\rto^{\Gamma\times Id}\dto_{\widetilde{\Theta}}& \mathscr{C}^{\alpha_1,\dots,\alpha_k}\times_{T_{\mathscr{B}_{n-1}\times\{1,\dots,k\}}^k}T_X^k\dto^{\Theta}\\
\mathscr{C}^{\alpha_1,\dots,\alpha_k}\times_{T_{\mathscr{B}_{n-1}\times\{1,\dots, k\}}^k} T_X^k\rto^{\Theta}& X\\
\enddiagram
$$
where $\widetilde{\Theta}$ is defined analogously to \rref{Thetatildesalg}.

\vspace{5mm}


As an example, note that the cases of $\mathscr{B}_n^{\{n\}}$ and
$\mathscr{B}_n^{\{0,n\}}$ give the notions of algebras and modules (respectively) over $n$-actads.

\vspace{5mm}

\section{$R$-Unital Actads and Bases}\label{sunital}

\vspace{5mm}

A unit axiom can be added to the concept of an $n$-actad (or a plain $n$-actad) without difficulty. Perhaps
the most surprising aspect of units is how diverse they become for $n>1$:
In the plain case, the unit is expressed by the natural inclusion
\beg{inclusionplainbasesruni}{B_{n-1}\subseteq B_n, \;\; n\geq 1.}
(given by the fact that $B_n$ is the free plain $(n-1)$-actad on 
$Id:B_{n-1}\r B_{n-1}$. 

We introduce the notation
$1^n_y\in B_n$ for the image of every $y\in B_{n-1}$.
Then $\forall x=((x_1,\dots,x_\ell), (i_1,\dots,i_{\ell-1}))$
\begin{enumerate}
\item
$x\circ_k^n 1^n_{x_k}=x$
\item
$\forall y\in B_n \text{ such that } G_n(y)=x,$
$$1_{x}^n\circ_1^n y=y.$$
\end{enumerate}

Then, we can define a unital plain actad as follows:
\begin{definition}
Suppose we are given a plain $n$-actad $\mathcal{C}$.
Then $\mathcal{C}$ is a unital plain actad provided that for $x\in B_{n-1}$, $y \in B_n$, $z\in \mathcal{C}(y)$,
we are given an element $1_{n,x} \in \mathcal{C}(1_x^n)$ such that
\begin{enumerate}
\item
$$\gamma_{n,j}(z,1_{n,x})=z$$

\item
$$\gamma_{n,j}(1_{n,x},z)=z,$$
if $G_n(y)=x$.
\end{enumerate}
For $n=0$, there is also a unit $1_{0,*}$, even though there is no $-1$ base.
\end{definition}

A unital $0$-actad is a monoid (instead of just a semigroup).

For actads, the story is analogous. Here, only the case of $n>0$ is new.
By the definition of $\mathscr{B}_{n+1}$ from $\mathscr{B}_n$ at the end of Section \ref{s2}, we also obtain a canonical inclusion $(n-1)$-fold
functor of $(n-1)$-fold categories
\beg{InclusionscrBruni}{\mathscr{I}_n:\mathscr{B}_{n-1}\r\mathscr{B}_n.}
In fact, it becomes an $n$-fold functor if we make all the $\{n\}$-morphisms identities in the source. In fact, \rref{InclusionscrBruni} is a section of the
$(n-1)$-fold functor $\mathscr{G}_n$. (In particular, in the $n$-actad case, note that units have $\{i\}$-automorphisms for $i<n$.)

The left unit property for $\mathscr{B}_n$ then can be expressed by introducing a left unit functor
$$\mathscr{H}_n:\mathscr{B}_n\r Comp_n$$
by
$$x\mapsto (\mathscr{I}_n\mathscr{G}_n x,1,x)$$
(see \rref{CompnS-morsdefn}).
Then the diagram
\beg{BnCompdiaguni}{
\diagram
\mathscr{B}_n\rto^{\mathscr{H}_n}\drto_{Id}& Comp_n\dto^{\Phi_n}\\
 &\mathscr{B}_n\\
\enddiagram}
commutes, and we call it the left unit property.
For the right unit axiom, we must remember where the unit is being inserted.
Recalling the notation of the last section, we may define a right unit $n$-fold functor
$$\mathscr{K}_n:\mathscr{B}_n^0\r Comp_n$$
by
$$\mathscr{K}_n (x,b)=(x,b,\mathscr{I}_n f_b(x)).$$
Then the right unit axiom is the commutativity of the diagram
\beg{Bn0CompDiagUni}{
\diagram
\mathscr{B}_n^0\rto^{\mathscr{K}_n}\drto& Comp_n\dto^{\Phi_n}\\
 &\mathscr{B}_n\\
\enddiagram}
where the diagonal $n$-fold functor is the projection.

\begin{definition}
For an $n$-actad $\mathscr{C}$, let
$$\mathscr{C}^0=\mathscr{C}\times_{\mathscr{B}_n}\mathscr{B}_n^0.$$
Then $\mathscr{C}$ is called a unital $n$-actad if there exists an $n$-fold functor lift
$$\diagram
& \mathscr{C}\dto\\
\mathscr{B}_{n-1}\urto^{\mathscr{I}_n^{\mathscr{C}}}\rto_{\mathscr{I}_n}&\mathscr{B}_n\\
\enddiagram$$
such that, letting
$$\mathscr{H}_n^{\mathscr{C}}:\mathscr{C}\r Comp_n^{\mathscr{C}}$$
be defined by
$$x\mapsto (\mathscr{I}_n^{\mathscr{C}}\mathscr{G}_n |x|,1,x),$$
where $|x|$ denotes the projection of $x$ to $\mathscr{B}_n$,
and
$$\mathscr{K}_n^{\mathscr{C}}:\mathscr{C}^0\r Comp_n^{\mathscr{C}}$$
be defined by
$$\mathscr{K}_n^{\mathscr{C}}=(x,b,\mathscr{I}_n^{\mathscr{C}}|f_b(x)|),$$
the following diagrams above \rref{BnCompdiaguni} and \rref{Bn0CompDiagUni} commute:
$$\diagram
\mathscr{C}\rto^{\mathscr{H}_n^{\mathscr{C}}}\drto_{Id}& Comp_n\dto^{\Gamma}\\
 &\mathscr{C}\\
\enddiagram$$
$$\diagram
\mathscr{C}^0\rto^{\mathscr{K}_n^{\mathscr{C}}}\drto_{\pi}& Comp_n\dto^{\Gamma}\\
 &\mathscr{C}\\
\enddiagram$$
where $\pi$ is the projection.
\end{definition}

But now there is also a version of unitality of $n$-actads (or plain $n$-actad) which includes units coming from $i$-actads
(or plain $i$-actads) for all $i<n$.
I call this concept {\em R-unitality} where R stands for ``recursive."

To give an example, a based operad (which, recall, is over $\mathbb{N}_0$ instead of $\mathbb{N}$)
is a unital operad $\mathscr{C}$ with a base point $*\in\mathscr{C}(0)$.
In a way, this comes from the monoid unit, since the free monoid on one generator contains $1=a^0$,
corresponding to the $0$ index. It is interesting to note that the base point of an operad is not subject to any axioms. Yet, it is an important
feature, adding a rich structure of ``degeneracies" given by plugging in the base point.
This structure is crucial in the Approximation Theorem in infinite loop space theory \cite{MayIterated}.

To define R-unital plain actads, we will define, simultaneously, the {\em R-unital plain $n$-base}
$B_n^R\supset B_n$ with maps
$$F_n^R: B_n^R\r T_{B_{n-1}^R}^R$$
$$G_n^R: B_n^R\r B_{n-1}^R$$
and composition $\circ^n$.
Here $T_X^R$ is defined the same way as $T_X$ (recall Definition \ref{defnTCinduced}), except that an empty sequence is allowed.
In fact, there will be a distinguished subset
$$B_n^{R,0}\subseteq B_n^R$$
such that
$$x\in B_n^{R,0}\Rightarrow m_x=0.$$
Now an {\em $R$-unital plain actad} $\mathscr{C}$ is defined the same as a unital plain $n$-actad with $B_n$ replaced
by $B_n^R$, together with a section
$$B_n^{R,0}\r\mathscr{C}$$
over $B_{n-1}^R$ (using $G_n^R$).

Then $B_{n+1}^R$ is defined as the free plain $R$-unital $n$-actad over $Id:B_n^R\r B_n^R$, modulo the relations
\beg{unitaxiomish}{\gamma_{n,i}(x,y)=x\circ_i^n y, \text{ for } x\in B_n^R, \; y\in B_n^{R,0}.}
One defines, inductively,
\beg{Bn+1R0InductiveStep}{B_{n+1}^{R,0}=B_n^{R,0}\coprod\{1_x^n|x\in B_{n-1}^R\}}
(we want to begin the induction with $B_0^R= \{ *,*\}$, $B_1^R =\mathbb{N}_0$).
Note that $m_x$ in $B_{n+1}$ does not necessarily equal $m_x$ in $B_n$.
We emphasize that in the second term on the right hand side of \rref{Bn+1R0InductiveStep} we mean the $R$-unital $n$-actad units,
{\em not} the units $1^n_x\in B_n^R$.

The reason to keep these units separate is to be able to plug into $1^n_x$,  in $\circ^{n+1}$: we have $1^n_x\in B_{n+1}^R$
via the inclusion $B_n^R\subset B_{n+1}^R$ with $m_{1^n_x}=1$. To explain this, in our definition of trees
in $B_2^R$, there is no reason to prohibit plugging in a tree with $1$ prong into the triangle with one prong 
(``lozenge") $1^1_1$ via $\circ^2$. However, plugging in the R-unit $1_1^1$ via $\circ^2$ eliminates the lozenge from the
tree, which is a new operation. 

The structure of $B_n^R$ is not as complicated as it may seem. In fact, one readily proves by induction
\begin{lemma}
The canonical inclusion $B_n\subseteq B_n^R\setminus B_n^{R,0}$ is a bijection.
\end{lemma}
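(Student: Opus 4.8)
The plan is to induct on $n$. For the base case, note that by definition $B_0^R=\{*,*\}$ with one of the two points constituting $B_0^{R,0}$ and the other being $B_0=\{*\}$, so $B_0^R\smallsetminus B_0^{R,0}=B_0$; (as a sanity check, at the next level $B_1^R=\mathbb{N}_0=\mathbb{N}\sqcup\{0\}=B_1\sqcup B_1^{R,0}$). Assume now that $B_m\subseteq B_m^R\smallsetminus B_m^{R,0}$ is a bijection for all $m\le n$. Since the target of the canonical inclusion is precisely $B_{n+1}^R\smallsetminus B_{n+1}^{R,0}$, the statement for $n+1$ is equivalent to the assertion that $B_{n+1}^R=B_{n+1}\sqcup B_{n+1}^{R,0}$ via the canonical inclusions, and this is what I will establish.

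Disjointness and injectivity of the canonical inclusion are immediate from arities. Every element of $B_{n+1}$ has $m_x\ge 1$ (a single generator $x\in B_n$ sits in $B_{n+1}$ as the datum $((x),())$ of arity $1$, and the arity of an iterated composite equals its number of factors), whereas every element of $B_{n+1}^{R,0}$ has $m_x=0$; as the canonical inclusions commute with $F_{n+1}$, the images of $B_{n+1}$ and of $B_{n+1}^{R,0}$ in $B_{n+1}^R$ are therefore disjoint. The canonical inclusion $B_{n+1}\hookrightarrow B_{n+1}^R$ is injective because an element of $B_{n+1}$, being an iterated composite of positive-arity generators, is unchanged by the normalization described below (there is nothing to absorb or cancel), and the only identifications the relations \rref{unitaxiomish} impose among all-positive-arity composites are the associativity relations \rref{assocgamma}, which are exactly those defining $B_{n+1}$ (via Lemma \ref{Lemmaplain}).

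For surjectivity, recall that $B_{n+1}^R$ is the free $R$-unital plain $n$-actad on $Id\colon B_n^R\to B_n^R$ modulo \rref{unitaxiomish}; hence, parallel to Lemma \ref{Lemmaplain}, every element has a left-nested normal form $w=\gamma_{n,i_{k-1}}(\cdots\gamma_{n,i_1}(a_1,a_2)\cdots,a_k)$ with $i_1\le\cdots\le i_{k-1}$, where each $a_\ell$ is either an element of $B_n^R$ or an $R$-unital $n$-actad unit $1_{n,x}$. I normalize $w$ further. Using the right-unit axiom $\gamma_{n,j}(z,1_{n,x})=z$ I delete every unit factor $a_\ell$ with $\ell\ge 2$, and using the left-unit axiom $\gamma_{n,1}(1_{n,x},z)=z$ I delete $a_1$ when it is a unit, unless $k=1$, in which case $w=1_{n,x}\in B_{n+1}^{R,0}$ and we are done. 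Now all $a_\ell$ lie in $B_n^R=B_n\sqcup B_n^{R,0}$ by the inductive hypothesis. An element of $B_n^{R,0}$ has no entries, so it can occur in $w$ only as a right argument (or, when $k=1$, as $w$ itself, in which case $w\in B_n^{R,0}\subseteq B_{n+1}^{R,0}$); and whenever some $a_\ell\in B_n^{R,0}$ with $\ell\ge 2$, the entry at position $i_{\ell-1}$ into which $a_\ell$ is substituted belongs to a unique one of the preceding factors, say $a_p$, so by repeated use of associativity \rref{assocforcirc} I may rewrite $w$ with $a_p$ replaced by $\gamma_{n,i'}(a_p,a_\ell)$; as $a_p$ is now a single element of $B_n^R$, \rref{unitaxiomish} gives $\gamma_{n,i'}(a_p,a_\ell)=a_p\circ_{i'}^n a_\ell\in B_n^R$, decreasing the number of factors by one. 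Each such step strictly decreases the number of factors, so the process terminates; at termination $w$ is either a single element of $B_n^R=B_n\sqcup B_n^{R,0}\subseteq B_{n+1}\cup B_{n+1}^{R,0}$, or a left-nested composite of $\ge 2$ factors all lying in $B_n$, which by Lemma \ref{Lemmaplain} is exactly an element of $B_{n+1}$. Thus $B_{n+1}^R=B_{n+1}\cup B_{n+1}^{R,0}$, completing the induction.

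The main obstacle is making this normalization rigorous: one must verify that the cancellation and absorption steps can always be carried out using only the unit axioms and \rref{assocforcirc}, and, more delicately, that the terminal form does not depend on the order in which they are performed, so that it is well defined. This confluence is proved in the same way as in Lemma \ref{Lemmaplain}, where the commutation of the reordering steps is governed by the shuffle-compatibility axioms \rref{varphiaxlong} and \rref{varphiaxshort}; the additional work here is to check that these interleave correctly with the unit relations and with \rref{unitaxiomish}. A secondary point to verify is the compatibility of the arity functions $m_{(-)}$ with the inclusions $B_n\subseteq B_{n+1}$ and $B_n^R\subseteq B_{n+1}^R$, which is what the disjointness argument rests on.
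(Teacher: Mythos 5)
The paper gives no written proof of this lemma (it is asserted as one that ``one readily proves by induction''), and your induction-with-normalization argument is exactly the intended one: use the inductive identification $B_n^R=B_n\sqcup B_n^{R,0}$ to reduce every word in the free $R$-unital plain $n$-actad, modulo \rref{unitaxiomish}, to either a unit or a unit-free left-nested composite, the latter being an element of $B_{n+1}$ by Lemma \ref{Lemmaplain}. Your proposal is correct, and you rightly isolate the only substantive point the paper elides --- confluence of the cancellation/absorption rewrites, which is handled by the same diamond-type argument as in Lemma \ref{Lemmaplain}.
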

\qed

Now $\mathscr{B}_n^R$, {\em $R$-unital $n$-actads}, (and all associated functors)
can be defined completely analogously to, $\mathscr{B}_n$, $n$-actads, via an inductive definition of $\mathscr{B}_n^R$ (and 
the associated functors) and then replacing $\mathscr{B}^R_n$ while extending the formula \rref{maxwpxi} to $S$-morphisms, with
$S\subseteq\{1,\dots,n\}$.
This is then a generalization of (unital) $n$-actads.

In this approach, we keep permutations as the $\{n\}$-morphisms; it is possible to create a larger
category generated by these morphisms and $n$-compositions with the various $i$-units, $i<n$, but then we no longer have an invertible cube-like $n$-fold category.

Note that an R-unital $1$-actad is the same thing as a based operad. $1$-morphisms in $\mathscr{B}_2$ together with
$?\circ^2_i 1_1^2$, $?\circ^2_i 1_0^1$ (with the obvious compatibility relations, noting that $B_2^{R,0}=\{1_1^2, 1_0^1\}$,) generate a larger
category of trees, whose opposite is equivalent to a tree analogue of a category of finite sets and injections.

Now we can define higher analogues of a based algebra over a based operad:
An {\em R-unital algebra} over an R-unital $n$-actad $\mathscr{C}$ is a fibered $(n-1)$-fold category
$$X\r\mathscr{B}_{n-1}^R$$
together with a section $(n-1)$-fold functor
$$\diagram
& X\dto\\
\mathscr{B}_{n-1}^{R,0}\urto\rto^{\subset}& \mathscr{B}_{n-1}^R\\
\enddiagram
$$
which satisfies equivariance and associativity (and unit) properties analogous to those described
in Section \ref{salg}, in addition to the R-unitality axiom, which states that
if 
$$(y,(x_1,\dots,x_m))\in S-Mor(\mathscr{C}\times_{T_{\mathscr{B}_{n-1}}} T_X)$$
with $x_i=\iota\in S-Mor(\mathscr{B}_{n-1}^{R,0})$, $S\subseteq\{1,\dots,n-1\}$,
then
$$\Theta(y,(x_1,\dots,x_m))=\Theta(\gamma_{n,i}(y,\iota ), (x_1,\dots,x_{i-1},x_{i+1},\dots, x_m)).$$
(recall that $\Theta$ denotes the functor mapping
$\mathscr{C} \times_{T_{B_{n-1}}} T_X \rightarrow T_X$ in \rref{ThetaCxTXX}).

\vspace{5mm}

\section{Monads}\label{sMonads}

\vspace{5mm}

A key point of the infinite loop space theory of May \cite{MayIterated} is the close connection between operads and monads.
A monad (for an introduction see \cite{Bor}, Vol. 2, Ch.4) in a category $A$ is a functor $C:A\r A$ which
satisfies monoid-like properties with respect to composition. Its {\em algebra} is an object $X$ together with a natural transformation
$\Theta:CX\r X$ which satisfies left module-like axioms with respect to composition.

For a universal algebra in the category of sets, its associated monad $C$ assigns to a set $X$ the free algebra $CX$
of the given type on $X$. For an operad $\mathscr{C}$ (in the unbased context), the associated monad $C$ has a particularly simple form
\cite{MayIterated}:
$$CX=\coprod_{n\geq 1}\mathscr{C}(n)\times_{\Sigma_n}X^n$$
(on the right hand side, the subscript denotes taking orbits).

In this section, we will describe the monads associated to $n$-actads and plain $n$-actads. Let us discuss the plain case first. Let
$\mathcal{C}$ be a plain $n$-actad. In this case, we define a monad
$C$ in the category of sets over $B_{n-1}$. (In particular, we are dealing with
multisorted algebra in the sense of \cite{multi}.)
Consider a set
$\Xi :X\r B_{n-1}$ over $B_{n-1}$. Again, we write, for $y\in B_{n-1}$, $X(y)$ for $\Xi^{-1}(y)$. Then, for $y\in B_{n-1}$, 
$$CX(y):=\coprod_{z\in B_n, G_n(z)=y}\;\;\prod_{i=1}^{m_z}X(f_i(z)).$$

In the case of an $n$-actad $\mathscr{C}$, we will construct the monad $C$ describing $\mathscr{C}$-algebras in the category of $(n-1)$-fold categories fibered over $\mathscr{B}_{n-1}$. We could, alternatively, describe a monad in sets over $B_{n-1}$, considering the $\{ i \}$-morphisms
as unary operations. While this point of view also seems interesting, we do not follow it in the present paper.

\vspace{5mm}

Suppose $\mathscr{C}$ is an $n$-actad.
Let $X$ be an $(n-1)$-fold category with a fibered $(n-1)$-fold functor 
$$\Xi:X\r \mathscr{B}_{n-1}.$$ 
(Thus, $X$ is an invertible cube-like $(n-1)$-fold category.)
Now we will construct the free $\mathscr{C}$-algebra $CX$ on $X$.
There are $n$-fold fibered functors
$$T_{\Xi}:T_X\r T_{\mathscr{B}_{n-1}}$$
$$\Lambda:\mathscr{C}\r T_{\mathscr{B}_{n-1}}.$$
We can get a $\Lambda$ since there is an $n$-fold fibered functor
$$\mathscr{C}\r\mathscr{B}_n,$$
and we can compose with
$$\mathscr{F}_n:\mathscr{B}_n\r T_{\mathscr{B}_{n-1}}.$$
Suppose $S\subseteq \{1,\dots,n-1\}$. Then first define an $(n-1)$-fold category $C_0X$ with
an $(n-1)$-fold functor
$\Phi:C_0X\r\mathscr{B}_{n-1}$ by
$$S-Mor(C_0X)=S-Mor(\mathscr{C}\times_{T_{\mathscr{B}_{n-1}}}T_X)/$$
$$/(x,y)\sim (\widetilde{f}(x),\widehat{f}(y)), \text{ where } x\in S-Mor(\mathscr{C}),\; y\in S-Mor(T_X)\;$$
$$T_\Xi(y)=\Lambda(x)=\alpha\in T_{\mathscr{B}_{n-1}},\; (f:\alpha\r\beta)\in S-Mor(T_{\mathscr{B}_{n-1}}),$$
$$\text{for some }\beta, \text{ and } \widetilde{f} \text{ is the lift of } f \text{ using } \Lambda, \text{ and}$$
$$\widehat{f} \text{ is the lift of } f \text{ using } T_{\Xi}.$$
The trouble is that the $(n-1)$-fold functor $\Phi$ satisfies the
uniqueness, but not the existence properties of a fibered $(n-1)$-fold functor, due
to the same property of the $(n-1)$-fold functor
$$\mathscr{G}_n:\mathscr{B}_n\r\mathscr{B}_{n-1}.$$
Let $\mathscr{A}$ be a $k$-fold category.
The forgetful functor
$$\mathscr{U}_{\mathscr{A}}:\text{fibered }k-\text{fold categories over }\mathscr{A}\r k-\text{fold categories over }\mathscr{A}$$
(on both sides, $k$-fold functors are morphisms) has a left adjoint $K_{\mathscr{A}}$. 

\begin{definition}
For $k=n-1$, $\mathscr{A}=\mathscr{B}_{n-1}$,
the monad $C$ in $(n-1)$-fold categories fibered over $\mathscr{B}_{n-1}$ associated with an $n$-actad $\mathscr{C}$
is given by
$$CX=K_{\mathscr{B}_{n-1}}C_0X.$$
\end{definition}

Then $CX$ as defined is the free $\mathscr{C}$-algebra on $X$.

\vspace{5mm}

Now, one can also make a generalization of this construction to iterated algebras.
Suppose $\mathscr{C}$ is an $n$-actad. Then we have that $\mathscr{C}^{\alpha_1,\dots,\alpha_k}$ is a multisorted $n$-actad over 
$\{1,\dots,k\}$ (by which we mean a $n$-actad in the category of sets over $\{1,\dots,k\}$).
From this point of view, we can just repeat the above construction over $\{1,\dots,k\}$.

More explicitly, let $X=(X_1,\dots,X_k)$ be a $k$-tuple of $(n-1)$-fold categories with a fibered $(n-1)$-fold functor
$$\Xi_j:X_j\r \mathscr{B}_{n-1}.$$
Now let $X$ also be an $(n-1)$-fold category, where the objects are $k$-tuples of objects of $X_j$'s, in the same order, and the $S$-morphisms are $k$-tuples of $S$-morphisms of the $X_j$'s also in the same order.
So, we also have an $(n-1)$-fold fibered functor
$$\Xi=(\Xi_1,\dots,\Xi_k):X\r\mathscr{B}_{n-1}\times\{1,\dots, k\}.$$
(Thus, $X$ is an invertible cube-like $(n-1)$-fold category.)
Now, we will define the free $\mathscr{C}^{\alpha_1,\dots,\alpha_k}$ category $C^{\alpha_1,\dots,\alpha_k}X$ on $X$.

Then, there are $n$-fold, fibered functors
$$T_{\Xi}:T^k_X\r T^k_{\mathscr{B}_{n-1}\times\{1,\dots,k\}}$$
$$\Lambda:\mathscr{C}^{\alpha_1,\dots,\alpha_k}\r T_{\mathscr{B}_{n-1}\times\{1,\dots,k\}}^k.$$
The construction $\Lambda$ is as follows.
First, by definition, we have that
$$\mathscr{C}^{\alpha_1,\dots,\alpha_k}=\mathscr{C}\times_{\mathscr{B}_n}\mathscr{B}_n^{\alpha_1,\dots,\alpha_k}.$$
Since we have that $\mathscr{C}$ is an $n$-actad in the normal sense, we have a fibered $n$-fold functor
$$\mathscr{C}\r\mathscr{B}_n.$$

\vspace{5mm}

Then, we would need a fibered $m$-fold functor for every $m$, for every $k\leq m$ with some $1\leq\alpha_1<\dots<\alpha_k\leq m$,
$$h_m: \mathscr{B}_m^{\alpha_1,\dots,\alpha_k}\r\mathscr{B}_m\times\{1,\dots,k\}:$$
For $m=1$, we have that $k$ must equal $1$. In addition, we also must have $\alpha_1=1$.
However, for $k=1$, we have that $\mathscr{B}_{1}^{1}=\mathscr{B}_1$. Then we have the identity, which is a fibered functor,
$$\mathscr{B}_m^{\alpha_1,\dots,\alpha_k}\r\mathscr{B}_m = \mathscr{B}_m\times\{1,\dots,k\}.$$
Suppose are given this data for $m-1$. Fix some $1\leq\alpha_1<\dots<\alpha_k\leq m$.
If $\alpha_k < m-1$, then we have
\beg{hmfirststepsmalllastalpha}{\diagram
\mathscr{B}_m^{\alpha_1 , \dots , \alpha_k} \rto^{\mathscr{F}_m^{\alpha_1,\dots , \alpha_k}} & T_{\mathscr{B}_{m-1}^{\alpha_1, \dots , \alpha_k}}
\rto & T_{\mathscr{B}_{m-1}} \times \{1,\dots ,k\}\\ 
\enddiagram}
(where the second functor of \rref{hmfirststepsmalllastalpha} is given by applying $h_{m-1}$ to the entries of
$T_{\mathscr{B}_{m-1}^{\alpha_1, \dots , \alpha_k}}$ and taking the maximal element of $\{1, \dots ,k\}$ for the second coordinate,
as in \rref{maxwpxi}).
If $\alpha_k = m$, then we have
$$\diagram
\mathscr{B}_M^{\alpha_1, \dots, \alpha_{k-1}, n} \rto^{\mathscr{F}_m^{\alpha_1,\dots, \alpha_{k-1}, n}} & T^\bullet_{\mathscr{B}_{m-1}^{\alpha_1,\dots, \alpha_{k-1}}}.\\
\enddiagram
$$
Now, by Definition \rref{DefinitionPointedTMultisorted}, we have
$$T_{\mathscr{B}_{m-1}^{\alpha_1, \dots , \alpha_{k-1}}} \rightarrow \{1, \dots k\}$$
and we can construct
$$T_{\mathscr{B}_{m-1}^{\alpha_1, \dots, \alpha_{k-1} }}\rightarrow T_{\mathscr{B}_{m-1}^{\alpha_1, \dots , \alpha_{k-1}}} \rightarrow
T_{\mathscr{B}_{n-1}}$$
where the first map is the forgetful functor and the second map is induced by the natural functor
$$\mathscr{B}_{m-1}^{\alpha_1, \dots, \alpha_{k-1}} \rightarrow \mathscr{B}_{m-1}.$$
This, we can construct a fibered functor
\beg{hmfirststeplargeastalpha}{\mathscr{B}_m^{\alpha_1, \dots , \alpha_k} \rightarrow T_{\mathscr{B}_{m-1}} \times \{1, \dots, k\}}
Then we can use the fact that 
$$\mathscr{F}_m:\mathscr{B}_m\r T_{\mathscr{B}_{m-1}}.$$
is a fibered functor, to get a lift of every $S$-morphism of $T_{\mathscr{B}_{m-1}}$ to $\mathscr{B}_{m}$. Using this lifting by composing it with the 
fibered functor of \rref{hmfirststepsmalllastalpha} or \rref{hmfirststeplargeastalpha}, we get a fibered functor
$$h_m:\mathscr{B}_m^{\alpha_1,\dots,\alpha_k}\r \mathscr{B}_m\times\{1,\dots,k\}.$$
(This concludes the inductive definition of $h_m$.)

\vspace{5mm}

At the moment, we use this for $m=n-1$.
We have
$$h_{n-1}:\mathscr{B}_{n-1}^{\alpha_1,\dots,\alpha_k}\r \mathscr{B}_{n-1}\times\{1,\dots,k\}.$$
Then we have
$$
\diagram
\mathscr{C}^{\alpha_1,\dots,\alpha_k}\rto&\mathscr{B}_n\times_{\mathscr{B}_n}\mathscr{B}_n^{\alpha_1,\dots,\alpha_k}=\mathscr{B}_n^{\alpha_1,\dots,\alpha_k}\\
\enddiagram
$$
Then, we will compose with $\mathscr{F}_n^{\alpha_1,\dots,\alpha_k}$ to get a map
$$\mathscr{B}_n^{\alpha_1,\dots,\alpha_k}\r T_{\mathscr{B}_{n-1}^{\alpha_1,\dots,\alpha_k}}\r T_{\mathscr{B}_{n-1}\times\{1,\dots,k\}}^k.$$
Using this, we get our $\Lambda$.

\vspace{5mm}

We still want the free iterated $\mathscr{C}$-algebra on $X$ which we denote by $C^{\alpha_1,\dots,\alpha_k}X$ to be an $(n-1)$-fold category.
So, again, suppose $S\subseteq \{1,\dots,n-1\}$. Then define first $C_0^{\alpha_1,\dots,\alpha_k}X$ by
$$S-Mor(C_0^{\alpha_1,\dots,\alpha_k}X)=S-Mor(\mathscr{C}^{\alpha_1,\dots,\alpha_k}\times_{T^k_{\mathscr{B}_{n-1}\times\{1,\dots,k\}}}T^k_X)/$$
$$/(x,y)\sim (\widetilde{f}(x),\widehat{f}(y)), \text{ where } x\in S-Mor(\mathscr{C}^{\alpha_1,\dots,\alpha_k}),\; y\in S-Mor(T^k_X)\;$$
$$T_\Xi(y)=\Lambda(x)=\gamma\in T_{\mathscr{B}_{n-1}},\; (f:\gamma\r\delta)\in S-Mor(T^k_{\mathscr{B}_{n-1}\times\{1,\dots k\}}),$$
$$\text{for some }\delta, \text{ and } \widetilde{f} \text{ is the lift of } f \text{ using } \Lambda, \text{ and}$$
$$\widehat{f} \text{ is the lift of } f \text{ using } T^k_{\Xi}.$$
Then we have
$$C^{\alpha_1,\dots,\alpha_k}X=K_{\mathscr{B}_{n-1}\times\{1,\dots,k-1\}} C_0^{\alpha_1,\dots,\alpha_k}X.$$

Now these are analogous unital and R-unital versions of these concepts imposing additional identifications for the units,
analogous to the concept of based algebras over an operad \cite{MayIterated}. We omit the details.

\vspace{5mm}

\section{Case study of a $2$-actad algebra}\label{snew}

\vspace{5mm}

When applying operads to topology, we do not, of course, work in the base category of sets, but rather in topological spaces or simplicial sets.
All our definitions in this setting work essentially without change.
In particular, geometric realizations of fibered $n$-fold functors are still fibered.
In the case of spaces, we require all structure maps to be continuous.
In the case of simplicial sets, we can think of a simplicial object in any of the structures we considered so far,
which is considered as a category where morphisms are homomorphisms
(i.e. maps preserving all structure).
This makes sense, since all the structures we considered can be axiomatized as
(multisorted) universal algebras.
In particular, functors of singular sets and geometric realizations linking the structures in the topological and simplicial context
exist and behave in the usual way.
One source of examples is the \v{C}ech resolution.
For a space $X$, $EX$ denotes the simplicial space
$$EX:\Delta^{Op}\r \text{\em{Spaces}},$$
with $EX_n=X^{n+1}$ (faces are given by taking an $(n+1)$-tuple of elements of $X$ to an $n$-tuple that is the original $(n+1)$-tuple missing a coordinate, and degeneracies are given by taking an $n$-tuple to an $(n+1)$-tuple which is the same except with one of the coordinates copied twice). We often identify $EX$ with its geometric realization $|EX|$.

The \v{C}ech resolution tends to preserve algebraic structures, since the geometric realization of simplicial spaces preserves
finite products  (at least when we are working in the compactly generated category \cite{Steenrod}).
For example, considering the associative $n$-actad we introduced in Section \ref{salg}, we obtain the $n$-actad
$E\mathscr{A}_n$, which we call the {\em $E_{\infty}$-$n$-actad}.
For example, $E\mathscr{A}_1$ is the Barratt-Eccles operad \cite{MayEinfty}.
As shown in \cite{MayEinfty,MayIterated}, the classifying space $BA$ of a permutative category $A$ is an $E\mathscr{A}_1$-algebra, hence an $E_{\infty}$-space.
(Recall that the classifying space (or nerve) of a category is the simplicial space where $BA_n$ consists of composable $n$-tuples of morphisms in $A$,
meaning objects for $n=0$, faces are given by compositions or forgetting the first or last morphism, and degeneracies are insertions of identities
\cite{MayIterated}). For a group $G$ considered as a category with one object, we have an identification
$$BG\cong EG/G$$
where $G$ acts diagonally on the right hand side. The bijection is given by homogenization of coordinates
$$(g_1,\dots,g_n)\mapsto (1,g_1,g_1g_2,\dots,g_1\cdots g_n).$$
A permutative category is a symmetric monoidal category that is strictly associative
where the commutativity isomorphism is a strict involution.

Recall the $n$-actad $\mathscr{A}_n$ described in Example 1 at the end of Section \ref{salg}.
One may therefore ask what interesting $E\mathscr{A}_n$-algebras there are, what they mean in homotopy
theory, and how they arise. In this paper, I give some examples for $n=2$.
Note, however, that for any $n$, inclusion of the simplicial $0$-stage gives a morphism of $n$-actads
\beg{AntoEAn}{\mathscr{A}_n\r E\mathscr{A}_n.}
Therefore, $E\mathscr{A}_n$-algebras are, in particular, $(n-1)$-actads, and we can study their algebras. For example,
$E\mathscr{A}_2$-algebras are, in fact, operads.

In this section, we will study the free $E\mathscr{A}_2$-algebra on a given groupoid $X\r\mathscr{B}_1$,
and in the simplest non-trivial case of $X$ (the free ``binary" case) will give a condition on a category which guarantees that its classifying space is an algebra over the free $E\mathscr{A}_2$-algebra $A_{E\mathscr{A}_2}X$ on $X$.
In some sense, this can be thought of as an $E\mathscr{A}_2$ analogue of the results of \cite{MayEinfty} on the structure of the
Barratt-Eccles operad.

In a way, we can think of the operad $A_{E\mathscr{A}_2}X$, for a groupoid $X$, as describing algebras with operations from
$X$ which have commutativity properties given by $X$. However, these operations do not have any associativity properties. 
Arranging these operations in a planar tree (as in the reverse Polish notation on some old calculators), the $E\mathscr{A}_2$-algebra
property says that the order of the operations ``does not matter in the $E_{\infty}$-sense."
We see from this example that,
even for $n=2$, which is the next stage beyond the well-studies case of $n=1$, the world of $n$-actads becomes very rich.

\vspace{3mm}

To say things more precisely, suppose $\mathscr{A}$ is a 2-actad. Then suppose $X$ is a groupoid with a fibered functor
$$X\r \mathscr{B}_1.$$
Then, as noted in Section \ref{sMonads} above, the free $\mathscr{A}$-algebra on $X$ is the operad
$$A_{\mathscr{A}}X(n)=\Sigma_n\times_{1-Mor(\mathscr{B}_2)}{\coprod_{T\in B_2,m_{G_2(T)}=n}}(\mathscr{A}(T)\times_{Aut_2(T)}\prod_{i=1}^{m_T}X(f_i(T))).$$
For $T\in B_2=Obj(\mathscr{B}_2)$, we have that $E\mathscr{A}_2(T)=\{T\}\times Aut_2(T)$
(where $Aut_2(T) = \{2\} -Mor(T,T)$).
So, for every $X$,
$$A_{E\mathscr{A}_2} X(n)=$$
$$\Sigma_n\times_{1-Mor(\mathscr{B}_2)}\coprod_{T\in B_2,m_{G_2(T)}=n}\{T\}\times Aut_2(T)\times_{Aut_2(T)}\prod_{i=1}^{m_T}X(f_i(T))=$$
$$\Sigma_n\times_{1-Mor(\mathscr{B}_2)}\coprod_{T\in B_2,m_{G_2(T)}=n}\{T\}\times\prod_{i=1}^{m_T}X(f_i(T))=$$
$$\Sigma_n\times_{1-Mor(\mathscr{B}_2)}\coprod_{T\in B_2,m_{G_2(T)}=n}\prod_{i=1}^{m_T}X(f_i(T)).$$

As an example of this, let, for a set $S$, $X_2^S$ be the groupoid fibered over $\mathscr{B}_1$
$$X_2^S(n)= \emptyset, \text{ if } n \neq 2,$$
and let
$$X_2^S(2)=S\times\Sigma_2$$
be the free $\Sigma_2$-set on $S$.
Then let $\mathscr{C}$ be the underlying operad of the free $E\mathscr{A}_2$-algebra on $X_2^S$.
Then 
$$\mathscr{C}(n)\cong B_2(n-1)_{\text{binary}} \times\Sigma_n\times B(\Sigma_{n-1}^S),$$
where $\Sigma_{n-1}^S$ is the groupoid given by $\Sigma_{n-1}$ acting on $S^{n-1}$,
and $B_2(n)_{\text{binary}}$ are the elements of $B_2(n)$ which are of the form
$$((2,2,\dots,2),(i_1,i_2,\dots,i_{n-1}))$$
for some $n$.
(Note that $B_2(n)_{\text{binary}}$ can also be interpreted as the set of binary trees with $n+1$ leaves.)

\vspace{5mm}

Let $Y$ be a groupoid with a map
$$\Xi:S\r \text{Funct}(Y\times Y,Y).$$
Let $proj_i^2:S\times S\r S$ be the coordinate projections for $i\in\{1,2\}$,
$proj_i^3:S\times S\times S\r S$ be the coordinate projections for $i\in\{1,2,3\}$.
When the source is obvious, we will simply write $proj_i$.
We also have maps
$$\chi_{\ell}:S\times S\r \text{Funct}(Y\times Y\times Y,Y)$$
$$\chi_{r}:S\times S\r \text{Funct}(Y\times Y\times Y,Y)$$
given by $\chi_{\ell}=(\Xi\circ proj_2)\circ((\Xi\circ proj_1)\times Id)$ and
$\chi_r=(\Xi\circ proj_1)\circ (Id\times(\Xi\circ proj_2))$.
Note that we apply the argument only to $proj_2$ and $proj_1$, e.g.
$$\chi_\ell ((s,t)) = (\Xi (t)) \circ (\Xi (s) \times Id)$$
(we will continue to use this convention later).
Then let $T:Y\times Y\r Y\times Y$ be the functor that switches coordinates.
We also have maps induced by permutations of operations
$$\chi_{\ell,\ell},\chi_{r,\ell},\chi_{\ell,r},\chi_{r,r}, \chi_u:S\times S\times S\r \text{Funct}(Y\times Y\times Y\times Y,Y)$$
which are given by
$$\chi_{\ell,\ell}=(\Xi\circ proj_3)\circ(\Xi\circ proj_2 \times Id)\circ(\Xi\circ proj_1\times Id\times Id)$$
$$\chi_{r,\ell}=(\Xi\circ proj_3)\circ(\Xi\circ proj_2\times Id)\circ(Id\times\Xi\circ proj_1\times Id)$$
$$\chi_{\ell,r}=(\Xi\circ proj_3)\circ(Id\times\Xi\circ proj_2)\circ(Id\times\Xi\circ proj_1\times Id)$$
$$\chi_{r,r}=(\Xi\circ proj_3)\circ(Id \times\Xi\circ proj_2)\circ(Id \times Id\times \Xi\circ proj_1)$$
$$\chi_{u}=(\Xi\circ proj_2)\circ((\Xi\circ proj_1)\times(\Xi\circ proj_3)).$$
Consider the following additional structure for a groupoid $Y$:
\begin{enumerate}
\item We have isomorphisms
\beg{corralphaleft}{\alpha_{\ell}:\chi_{\ell}\r\chi_{\ell}\circ T,}
\beg{corralpharight}{\alpha_r:\chi_r\r\chi_r\circ T}
with the following commuting diagrams:

\vspace{5mm}

\begin{tikzcd}
\chi_{\ell}\arrow[ddr,"Id"]\arrow[r,"\alpha_{\ell}"]& \chi_{\ell}\circ T\arrow[d, "\alpha_{\ell}\circ T"]\\
& \chi_{\ell}\circ T\circ T\arrow[d,"="]\\
&\chi_{\ell}\\
\end{tikzcd}
\begin{tikzcd}
\chi_{r}\arrow[ddr,"Id"]\arrow[r,"\alpha_{r}"]& \chi_{r}\circ T\arrow[d, "\alpha_{r}\circ T"]\\
& \chi_{r}\circ T\circ T\arrow[d,"="]\\
&\chi_{r}\\
\end{tikzcd}

\item
The following diagram (and the ones similar to it, with a different $\chi_{?,?}$'s) commute:
$$
\diagram
\chi_{\ell,\ell}\rto^{Id}\dto_{(\Xi\circ proj_3)(\alpha_{\ell}\times Id)}& \chi_{\ell,\ell}\dto^{\alpha_{\ell}\circ (\Xi\circ proj_1)\times Id\times Id}\\
\chi_{\ell,\ell}\circ(T\times Id)\dto_{\alpha_{\ell}\circ(\Xi\circ proj_1)\times Id\times Id}& \chi_{\ell,\ell}\circ(Id\times T)\dto^{(\Xi\circ proj_3)(\alpha_{\ell}\times Id)}\\
\chi_{\ell,\ell}\circ(Id\times T)\circ(T\times Id)\dto_{(\Xi\circ proj_3)(\alpha_{\ell}\times Id)}&\chi_{\ell,\ell}\circ(T\times Id)\circ(Id\times T)\dto^{\alpha_{\ell}\circ(\Xi\circ proj_1)\times Id\times Id}\\
\chi_{\ell,\ell}\circ(T\times Id)\circ(Id\times T)\circ(T\times Id)\rto^=& \chi_{\ell,\ell}\circ(T\times Id)\circ(Id\times T)\circ(T\times Id)\\
\enddiagram
$$

\item
The following diagram commutes:
$$
\diagram
\chi_u\rto^{Id}\dto_{\alpha_{\ell}\circ(Id\times Id\times(\Xi\circ proj_3))}&\chi_u\dto^{\alpha_r\circ((\Xi\circ proj_1)\times Id\times Id)}\\
\chi_u\circ(T\times Id)\dto_{\alpha_r\circ((\Xi\circ proj_1)\times Id\times Id)}&\chi_u\circ(Id\times T)\dto^{\alpha_{\ell}\circ(Id\times Id\times(\Xi\circ proj_3))}\\
\chi_u\circ(Id\times T)\circ(T\times Id)\dto_{\alpha_{\ell}\circ(Id\times Id\times(\Xi\circ proj_3))}& 
\chi_u\circ(T\times Id)\circ(Id\times T)\dto^{\alpha_r\circ((\Xi\circ proj_1)\times Id\times Id)}\\
\chi_u\circ(T\times Id)\circ(Id\times T)\circ(T\times Id)\rto^{=}&\chi_u\circ(Id\times T)\circ\circ(T\times Id)\circ(Id\times T).\\
\enddiagram
$$
\end{enumerate}

Then we claim the following
\begin{theorem}\label{BYalg}
Given the structure described the above axioms $(1)$-$(3)$,
the classifying space on morphisms $BY$ is an algebra over the operad $A_{E\mathscr{A}_2}(X^S_2)$.
\end{theorem}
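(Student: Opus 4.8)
The plan is to exhibit $A_{E\mathscr{A}_2}(X_2^S)$ as the classifying space of an operad $\underline{\mathscr{C}}$ \emph{in groupoids}, and to promote $Y$ to a strict $\underline{\mathscr{C}}$-algebra in groupoids; since the classifying space functor $B=|N(-)|$ preserves finite products on the nose in the compactly generated category \cite{Steenrod}, applying $B$ then equips $BY$ with an $A_{E\mathscr{A}_2}(X_2^S)=B\underline{\mathscr{C}}$-algebra structure. By the computation preceding the theorem, $\underline{\mathscr{C}}(n)=B_2(n-1)_{\text{binary}}\times\Sigma_n\times\Sigma_{n-1}^S$, with the first two factors discrete and $\Sigma_{n-1}^S$ the action groupoid of $\Sigma_{n-1}$ on $S^{n-1}$, and its operad structure is induced objectwise by grafting of binary trees, block sums of leaf permutations, and the evident insertion functors on the factors $\Sigma_{n-1}^S$.

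I would define the action functors $\theta_n\colon\underline{\mathscr{C}}(n)\times Y^n\r Y$ as follows. On objects, send $(T,\sigma,(s_1,\dots,s_{n-1});y_1,\dots,y_n)$ to the element of $Y$ obtained by labelling the internal nodes of $T$ (for a fixed enumeration of the nodes of each binary tree) by the binary operations $\Xi(s_1),\dots,\Xi(s_{n-1})\in\text{Funct}(Y\times Y,Y)$, inserting $y_{\sigma(1)},\dots,y_{\sigma(n)}$ at the leaves from left to right, and evaluating from the leaves toward the root; this is functorial in the $Y^n$-variable because each $\Xi(s)$ is. To define $\theta_n$ on morphisms of the factor $\Sigma_{n-1}^S$, observe that the $n-1$ internal nodes of $T$ carry the tree $T^{\circ}$ obtained by deleting the leaves of $T$, and that $\Sigma_{n-1}$, acting on labellings, is generated by the transpositions $t_e$ indexed by the edges $e$ of $T^{\circ}$ --- the swaps of the labels of an internal node and one of its internal children. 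Put $\theta_n(t_e)=\alpha_\ell$ when the child in $e$ is a left child in $T$ and $\theta_n(t_e)=\alpha_r$ when it is a right child, in each case applied to the relevant three-variable sub-configuration with the surrounding operations left untouched.

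The heart of the argument is checking that $\theta_n$ descends to a functor on $\underline{\mathscr{C}}(n)$, i.e.\ that the defining relations of $\Sigma_{n-1}$ on the generators $t_e$ are respected. These relations are: $t_e^2=1$; the braid relations $(t_et_f)^3=1$ whenever $e,f$ share a vertex of $T^{\circ}$; the commutations $(t_et_f)^2=1$ when $e,f$ are disjoint; and one further relation at each vertex of $T^{\circ}$ of degree three. The relations $t_e^2=1$ hold by the involutivity diagrams of Axiom $(1)$; the braid relations follow from the commuting diagrams of Axiom $(2)$ (for two edges lying along a path in $T^{\circ}$, a ``comb'' configuration, in one of the four variants $\chi_{\ell,\ell},\chi_{\ell,r},\chi_{r,\ell},\chi_{r,r}$) and of Axiom $(3)$ (for the two edges joining a node to its two internal children, the $\chi_u$ configuration); the commutations $(t_et_f)^2=1$ hold automatically, since disjoint edges give natural isomorphisms supported on disjoint sets of variables; and the relation at a degree-three vertex is obtained by composing one instance of Axiom $(2)$ with one of Axiom $(3)$, i.e.\ is one of the ``similar'' diagrams allowed for in Axiom $(2)$. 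Once each $\theta_n$ is a functor, equivariance in the $\Sigma_n$-factor is immediate (permuting leaves commutes with evaluating the tree), the associativity axiom is a direct check that grafting trees and then evaluating agrees with evaluating the graftee and then the host (using that the chosen node-enumeration is compatible with grafting), and the unit is carried by the one-leaf tree. Applying $B$ then yields the theorem.

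The main obstacle is precisely this functoriality step: one must match, tree by tree, the full presentation of each symmetric group $\Sigma_{n-1}$ relative to the edge-transposition generating set of $T^{\circ}$ against the coherence diagrams of Axioms $(1)$--$(3)$ and their stated variants. The point --- and the reason the theorem needs exactly these hypotheses --- is that Axioms $(1)$--$(3)$ are designed to be a complete such list: the two kinds of elementary swap ($\alpha_\ell$ and $\alpha_r$) together with their involutivity, and the two kinds of braid relation (the ``comb'' one and the one routed through a branching node, governed by $\chi_u$). A lesser point that still requires care is that one genuinely lands on a \emph{strict} operad-algebra in groupoids, so that $B$ applies verbatim rather than up to homotopy; this is unproblematic because every $2$-cell in sight is invertible ($\alpha_\ell$ and $\alpha_r$ are isomorphisms and $Y$ is a groupoid) and the discrete factors $B_2(n-1)_{\text{binary}}\times\Sigma_n$ contribute no $2$-cells, so the only coherence to be verified is the functoriality of the $\theta_n$ described above.
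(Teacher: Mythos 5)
Your proposal is correct and follows essentially the same route as the paper: the paper also defines the action by evaluating $S$-decorated binary trees on composable tuples of $Y$-morphisms (i.e.\ simplex-by-simplex on the nerve, which is the same as your operad-in-groupoids formulation followed by $B$), and it likewise reduces well-definedness to matching the edge-transposition presentation of the node-permutation groups (its Theorem \ref{Theorempresentation}) against the coherence diagrams $(1)$--$(3)$, with $t_e\mapsto\alpha_\ell$ or $\alpha_r$ exactly as you describe. The presentation you assert for $\Sigma_{n-1}$ relative to the edges of $T^{\circ}$ is precisely the content of Theorem \ref{Theorempresentation}, which the paper proves separately and which your argument, like the paper's, takes as the key input.
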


\vspace{5mm}

The Theorem will be proved at the end of this Section.
To prove the Theorem, we first will prove some results about $B_2(n)_{\text{binary}}$.

\vspace{5mm}

As a warm-up case, consider the usual presentation of the symmetric group:
\beg{SigmanPresentationCase}{\Sigma_n\cong \langle a_1,\dots, a_{n-1}|a_i^2,\; a_ia_{i+1}a_ia_{i+1}a_ia_{i+1},\; a_ia_ja_ia_j\; j>i+1\rangle.}
Here $a_i$ are understood as switches of consecutive terms in an $n$-element sequence.
This can be proved for $4$ directly from using the Cayley graph. Then, one uses induction and the isotropy groups of elements of $\{1,\dots,n\}$ in the standard action of $\Sigma_n$ to prove it for every $n$.

\vspace{5mm}

In the case of an element of $B_2(n)_{\text{binary}}$, the terms are the triangles (we shall say ``nodes") of a binary tree.
Thus, they do not naturally form a sequence but could be thought of as forming themselves a tree where edges are between triangles which are attached.
For a given binary tree, we will present the group on its nodes in terms of generators which correspond to such edges
(of which there are $n-1$).

Concretely, we claim
\begin{theorem}\label{Theorempresentation}
We have
\beg{B2nBinary2Morphism}{
\begin{array}{c}2-Mor(\mathscr{B}_2(n)_{\text{binary}})\cong\langle a_1,\dots,a_{n-1}|a_i^2,\\
a_ia_{j}a_ia_{j}a_ia_{j} \text{ when $i\neq j$ and $a_i$ and $a_j$ share a node},\\
(a_ia_{j}a_{k}a_{j})^2\text{ when $i<j<k$ and $a_i$, $a_j$, $a_k$ share a node},\\
a_ia_ja_ia_j \text{ when $a_i$ and $a_j$ do not share a node}\rangle.
\end{array}}
(Here we denote by $2-Mor(\mathscr{B}_2(n)_{\text{binary}})$ the full subcategory of $2-Mor(\mathscr{B}_2)$ on binary trees.)
\end{theorem}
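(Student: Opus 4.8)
The plan is to make the group concrete, reduce the statement to a fact about transpositions along the edges of a tree, and then induct on $n$ exactly in the style of the warm-up for $\Sigma_n$ above. Fix a planar binary tree $T$ with $n+1$ leaves, hence $n$ internal nodes. Since $\mathscr{F}_2(T)=(2,\dots,2)$ ($n$ terms), Definition \ref{defnTCinduced} shows that for any two binary trees $T,T'$ with $n+1$ leaves the set $2\text{-Mor}(\mathscr{B}_2)(T,T')$ is canonically $\Sigma_n$, the group of permutations of the $n$ internal nodes, with composition the composition of permutations; so the full subcategory on binary trees with $n+1$ leaves is a connected groupoid with all vertex groups $\cong\Sigma_n$, and it suffices to present $Aut_2(T)\cong\Sigma_n$. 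Let $\Gamma$ be the tree on the $n$ internal nodes of $T$, with the $n-1$ edges joining nodes adjacent in $T$; I take the generator $a_i$ to be the transposition of the two endpoints of the $i$-th edge of $\Gamma$, so that two generators ``share a node'' precisely when the corresponding edges of $\Gamma$ are incident.

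Next I would check that the four families of relations hold in $\Sigma_n$, so that the abstractly presented group $\widehat{G}_n$ surjects onto $\Sigma_n$. Indeed $a_i^2=1$; two incident transpositions $(uv),(uw)$ have product of order $3$; disjoint transpositions commute; and a direct computation gives $\bigl((uv_1)(uv_2)(uv_3)(uv_2)\bigr)^2=(uv_1)(v_2v_3)$, of order $2$, which is the relation $(a_ia_ja_ka_j)^2$. The surjectivity of $\widehat{G}_n\twoheadrightarrow\Sigma_n$ also uses the classical fact that the $a_i$ generate $\Sigma_n$ (a one-line induction on $\Gamma$: peel off a leaf of $\Gamma$ and its pendant edge). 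It therefore remains only to show $|\widehat{G}_n|\le n!$.

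For this I would induct on $n$. The cases $n\le 4$ are checked directly: the only non-routine one is $\Gamma=K_{1,3}$, where the relation $(a_ia_ja_ka_j)^2=1$ is genuinely needed to collapse the otherwise infinite affine Coxeter group $\widetilde{A}_2$ down to $\Sigma_4$, which is a finite Cayley-graph (Todd--Coxeter) computation just as in the warm-up for $n=4$. For the inductive step, choose a degree-one vertex $v$ of $\Gamma$ (a ``cherry'' of $T$), say the endpoint of the edge labelled $n-1$, and let $H=\langle a_1,\dots,a_{n-2}\rangle\le\widehat{G}_n$. Deleting $v$ and the pendant edge $a_{n-1}$ from $\Gamma$ yields the internal-node tree of a binary tree with $n$ leaves, whose presentation has as its defining relations exactly the relations of $\widehat{G}_n$ involving only $a_1,\dots,a_{n-2}$ (incidences among those edges are unchanged by deleting a pendant edge), so $H$ is a quotient of $\widehat{G}_{n-1}$ and $|H|\le (n-1)!$ by the inductive hypothesis. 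Now $\widehat{G}_n$ acts transitively on the $n$ internal nodes through $\Sigma_n$, and $H$ fixes $v$; the key claim is that $H$ is the \emph{full} stabilizer of $v$, whence $[\widehat{G}_n:H]=n$ and $|\widehat{G}_n|\le n\cdot(n-1)!=n!$. Together with the surjection onto $\Sigma_n$ this forces $\widehat{G}_n\cong\Sigma_n$.

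The main obstacle is the claim $\mathrm{Stab}_{\widehat{G}_n}(v)=H$. I would prove it by a Reidemeister--Schreier computation: take as coset representatives $r_w$ the product of the edge-generators along the unique $\Gamma$-path from $v$ to each internal node $w$, and show every Schreier generator $r_w\,a_i\,r_{w'}^{-1}$ lies in $H$. Since $a_{n-1}=(vu)$ with $u$ the unique neighbour of $v$ in $\Gamma$, this amounts to showing that a word in the $a_i$ whose image in $\Sigma_n$ fixes $v$ can be rewritten, using only the given relations, so as to remove every occurrence of $a_{n-1}$: the relation $a_{n-1}^2=1$ cancels adjacent pairs, relation (4) moves $a_{n-1}$ past edges not incident to $u$, relation (2) rewrites $a_{n-1}a_ia_{n-1}=a_ia_{n-1}a_i$ for $a_i$ incident to $u$, and relation (3) governs passing $a_{n-1}$ between two distinct edges at $u$ when $\deg_\Gamma(u)\ge 3$. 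Organizing this rewriting uniformly over an arbitrary tree $\Gamma$ — in particular the bookkeeping around degree-three vertices, which is exactly where relation (3) is indispensable — is the delicate part; the base cases $n\le4$ are chosen so that every local configuration of edges that can occur around such a $u$ has already been verified by hand.
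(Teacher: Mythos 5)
Your route is genuinely different from the paper's. The paper proves the theorem by showing that the presentation attached to a binary tree can be transformed, by explicit substitutions of generators (the maps $\kappa_i$ of \rref{Kappaidescription}, checked to preserve all relations via the five-node move \rref{TheoremPresentationBinary5tree}), into the presentation attached to a tree in which a chosen branch node has been pushed one step farther from the root; iterating turns the tree into a path and reduces everything to the classical Coxeter presentation \rref{SigmanPresentationCase}, whose stabilizer-based proof is taken as known. You instead run the stabilizer argument directly on an arbitrary tree: a surjection $\widehat{G}_n\twoheadrightarrow\Sigma_n$ from the relations, then $|\widehat{G}_n|\le n!$ by induction using a pendant edge, the subgroup $H=\langle a_1,\dots,a_{n-2}\rangle$ with $|H|\le (n-1)!$, and the index bound $[\widehat{G}_n:H]\le n$. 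The setup (identifying the $2$-morphisms of binary trees with $\Sigma_n$ acting on nodes), the verification of the relations in $\Sigma_n$ (the element $(uv_1)(uv_2)(uv_3)(uv_2)$ equals $(uv_1)(v_2v_3)$ and hence has order $2$ --- you wrote this identity as the value of the square, a slip), the quotient argument for $H$, and the shared base case $n\le 4$ are all fine.

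The gap is the key claim $[\widehat{G}_n:H]\le n$, equivalently that every Schreier generator $r_w\,a_i\,r_{w'}^{-1}$ lies in $H$. You name the right technique but do not carry out the computation, and the justification offered --- that the base cases $n\le 4$ already verify ``every local configuration'' around the branch vertex $u$ --- does not substitute for it: the words $r_w\,a_i\,r_{w'}^{-1}$ have length growing with $n$ (the representative $r_w$ is the path word from $v$ to $w$, of length up to $n-1$), so reducing them to words in $a_1,\dots,a_{n-2}$ requires a structural induction (for instance on the distance from $v$ to $w$, tracking how the conjugate $r_w a_i r_w^{-1}$ interacts with the unique edge at $v$), not a finite check on small trees. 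This is precisely where relation (3) must be deployed, in the form that $a_{n-1}$ commutes with $a_ja_ka_j$ for $a_j,a_k$ the other edges at $u$ (and one also needs the relation for the other orderings of $i,j,k$, which the paper's Comment derives); until that rewriting is written out, the proof is incomplete. For comparison, the paper only ever invokes the stabilizer argument for the path, where it is classical, and absorbs all of the tree combinatorics into the isomorphisms $\kappa_i$; your approach, once the coset-closure computation is supplied, would give a self-contained and arguably more standard argument, at the price of doing that computation for a general tree of maximal degree three.
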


\noindent {\bf Comment:} When $a_i$, $a_j$, $a_k$ share a node with $i,j,k$ different, the relation
$(a_ia_j a_k a_j)^2$ follows from the relations \rref{B2nBinary2Morphism} regardless of the order of $i,j,k$.
This is by the fact that the relations \rref{B2nBinary2Morphism} involving $a_i$, $a_j$, $a_k$ generate a copy of $\Sigma_4$, which follows
by examining the Cayley graph.

\begin{proof}

If $n=4$, this can be proved drawing the Cayley graph.
For $n\geq 5$, suppose we have a binary tree
$$((x_1,\dots,x_k),(i_1,\dots,i_{k-1}))=((2,\dots,2),(i_1,\dots,i_{k-1})).$$
For lower $n$, assume the claim as an induction hypothesis.
Then suppose we can
fix $x_{i-1}$, $x_i$, $x_{i+1}$, $x_{i+2}$, $x_{i+3}$ with $x_i$ connected to both $x_{i+1}$ and $x_{i+2}$.
Then let $a, b, c, d$ be the $2$-morphisms that switch $x_{i-1}$ and $x_i$, $x_i$ and $x_{i+2}$, $x_{i+3}$ and $x_{i+2}$, and $x_{i}$ and $x_{i+1}$ 
respectively. In a picture, this looks like

\vspace{5mm}

\beg{TheoremPresentationBinary5tree}{\begin{tikzpicture}
\draw [fill] (0,2) circle [radius=0.05];
\draw [fill] (2,2) circle [radius=0.05];
\draw [fill] (2,0) circle [radius=0.05];
\draw [fill] (4,2) circle [radius=0.05];
\draw [fill] (6,2) circle [radius=0.05];
\draw [<->] (0,2)--(2,2);
\draw [<->] (2,2)--(2,0);
\draw [<->] (2,2)--(4,2);
\draw [<->] (4,2)--(6,2);
\draw (1,2.25) node {a};
\draw (1.75,1) node {d};
\draw (3,2.25) node {b};
\draw (5,2.25) node {c};
\draw [<->] (4,2)--(2,0);
\draw (3.52,1) node {bdb};
\draw (0,2.25) node {$x_{i-1}$};
\draw (2,2.25) node {$x_{i}$};
\draw (2,-0.5) node {$x_{i+1}$};
\draw (4,2.25) node {$x_{i+2}$};
\draw (6,2.25) node {$x_{i+3}$};
\end{tikzpicture}
}

\vspace{5mm}
\noindent where the dots represent the $x_j$'s.
Let 
\beg{TheoremPresentationGiDefinition}{\begin{array}{c}
G_i=\langle a, b,c,d|a^2,b^2,c^2,d^2,\\ 
(ab)^3, (bc)^3,(ad)^3,(db)^3,(daba)^2, (ac)^2, (cd)^2\rangle
\end{array}}
(in our notation here, we rename the $x_i$'s after each switch, making the above compositions possible).
Then let $G_{i,1}$ be the subgroup of $G_i$ that is generated by $a,b,d$. Let $G_{i,2}$ be the subgroup of $G_i$ that is generated by $bdb,b,c$.
$$\kappa_i:G_{i,1}\r G_{i,2}$$
with 
\beg{Kappaidescription}{\begin{array}{c}\kappa_i(a)=b\\
\kappa_i(b)=c\\
\kappa_i(d)=bdb.
\end{array}}
So, in other words, $\kappa_i$ shifts $x_i$ to $x_{i+2}$ and takes the permutations around $x_i$ to those around $x_{i+2}.$

First, note that by the case $n=4$, $G_{i,1}$ is the symmetric group, and also can be presented only in the relations
\rref{TheoremPresentationGiDefinition} including $a$, $b$, $d$.
We want $\kappa_i$ to be a isomorphism, so we have to check the relations of $G_i$ for $a,\; b,$ and $d$ for $\kappa_i(a),\;\kappa_i(b),$
and $\kappa_i(d).$
It is obvious that $(\kappa_i(a))^2=b^2,\;(\kappa_i(b))^2=c^2$ are $1$.
We have
$$(\kappa_i(d))^2=(bdb)^2=bddb=bb=1.$$

Next, we have
$$(\kappa_i(a)\kappa_i(d))^3=(bbdb)^3=(db)^3=1.$$

Next, we can commute $d$ and $c$ since they permute separate pairs of $x_j$'s.
$$(\kappa_i(d)\kappa_i(a)\kappa_i(b)\kappa_i(a))^2=$$
$$bdbbcbbdbbcb=bdcdcb=bddccb=bb=1.$$

Now we need to check the Yang-Baxter relations. It is obvious that
$$\kappa_i(a)\kappa_i(b)\kappa_i(a)\kappa_i(b)\kappa_i(a)\kappa_i(b)=bcbcbc=1.$$
We have
$$(\kappa_i(d) k_i(b))^3 = (bdbc)^3 = $$
$$=bd (bcbd)^2 bc =bd (cbcd)^2 bc = bd (cbdc)^2bc =$$
$$bdc(bd)^2cbc = bc(d(bd)^2)cbc = (bc)^3 =1$$
So, we have every relation. 
Thus, $\kappa_i$ is a
homomorphism. On the other hand, all of the relations \rref{TheoremPresentationGiDefinition} are valid in the symmetric group,
and thus the image of $\kappa_i$ must be a symmetric group and $\kappa_i$ must be an isomorphism.

\vspace{5mm}

Now suppose we have a binary tree in $B_2(n)_{\text{binary}}$.
Represent its triangles as nodes $x_i$, and edges $a_i$ between them where
the triangles are attached.
Consider a degree three node which is farthest from the root.
Using the move \rref{TheoremPresentationBinary5tree}, we can switch one edge in a way that this node moves farther from the root,
and the relations for the new tree are equivalent to the relations of the old tree.
This is because all the relations which change occur in presentations of $\Sigma_4$ subgroups, which stay
isomorphic, as we just proved.
(In the end, we also need a variant of the move \rref{TheoremPresentationBinary5tree} where the edge $c$ is absent,
but that case is clear since we already know the group is $\Sigma_4$).

By repeating these moves, the tree can be turned into a sequence, in which case we are reduced to \rref{SigmanPresentationCase}.

%
\end{proof}

\noindent {\em Proof of Theorem \ref{BYalg}.}
Analogous to the argument of \cite{MayEinfty} that for a permutative category $A$, $BA$ is an $E\mathscr{A}_1$-algebra.
Consider $n$ composable $p$-tuples of $Y$-morphisms
$$
\diagram
X_{1,0}\rto^{f_{1,1}}&X_{1,1}\rto^{f_{1,2}}& \dots\rto^{f_{1,p}}& X_{1,p}\\
&\vdots & \vdots & \\
X_{n,0}\rto^{f_{n,1}}&X_{n,1}\rto^{f_{n,2}}& \dots\rto^{f_{n,p}}& X_{n,p}.\\
\enddiagram
$$
Choose a tree $T\in B_2(n)_{binary}$ with $p+1$ decorations
$T_0,\dots, T_p$ of its nodes by elements of $S$. Apply the operations corresponding to $T_i$ to the objects $X_{1,i},\dots,X_{n,i}$, and apply the same operations, composed with the appropriate composition of the coherences \rref{corralphaleft},\rref{corralpharight}, to the morphisms
$f_{1,i},\dots,f_{n,i}$. The coherence diagrams $(1)$-$(3)$ (given in this section,
directly before the statement of Theorem \ref{BYalg}) guarantee consistency of these compositions, by Theorem \ref{Theorempresentation},
thereby describing an $A_{E\mathscr{A}}(X_2^S)$-action on $BA$.
We construct maps
$$A_{E\mathscr{A}_2} X_2^S (n) \times (BY)^n \rightarrow BY.$$
\qed

\vspace{5mm}

\section{Examples of New Operads}

\vspace{5mm}

In this section, I will describe a method for using $E\mathscr{A}_2$ algebras to construct new examples of operads.

\vspace{3mm}

First, suppose $X$ is an operad. Then we have a functor
$$X\r \mathbb{N},$$
(where the morphisms in $\N$ are identities)
since we have a projection functor $\mathscr{B}_1\r\mathbb{N}$.

Then define, for $m,n\in\N, k\in\N_0$,
$$
\begin{array}{c}
S_k(n,m)=\{(T_0,\dots,T_k)|T_j\in B_2,\; m=m_{T_j},\; G(T_j)=n,\\
\text{there exists permutations }\sigma_j:\{1,\dots,m\}\rightarrow\{1,\dots, m\}\\ 
\text{such that, for} \; j=1,\dots,k,\; f_i(T_0)=f_{\sigma_j(i)}(T_j)\}.\\
\end{array}
$$
This means that if $(T_0,\dots,T_k)\in S_k(n,m)$, then the $f_i(T_j)$'s are the same as the $f_i(T_0)$'s, up to permutation.
Then we can define monads $M_k$ in the category of spaces over $\mathbb{N}$ of the form
$$M_kX(n)=\coprod_m\coprod_{(T_0,\dots,T_k)\in S_k(n,m)}\underbrace{\Sigma_n\times\dots\times\Sigma_n}_{k+1}\times\prod_{i=1}^{m_{T_0}} X(f_i(T_0)),$$
for $k\in \mathbb{N}_0$. The $\Sigma_n$ factors correspond to permutations of ``prongs" of the
trees $T_0,\dots,T_k$ (as would arise in the free operad on $\mathscr{B}_1$; see Sections \ref{salg}, \ref{sMonads}).
Monad composition is defined by $2$-composition of trees following the order of the natural numbers $1,\dots,m_{T_0}$ in the $0$'th coordinate, and the order specified by the permutation on $S_k(n,m)$ in the other coordinates. This $2$-composition is ``twisted" by the
$1$-permutation of prongs in the $\Sigma_n$ factors (meaning that the successors are switched according to the permutation of the prongs of the inserted tree). The permutations in the resulting $S_k(n,m)$, as well as in the $\Sigma_n$ factors, are then determined as appropriate
versions of the wreath product (to match entries which were originally matched).
This is obviously associative. We observe that for $k=0$, algebras over this monad are just operads in the category of spaces, i.e. the monad takes
$\coprod X(n)\times\Sigma_n$, and then applies the monad defining operads in monoids fibered over $\mathscr{B}_1$, as considered in
Section \ref{salg}, \ref{sMonads}.

Now let
\beg{scrMkl1tomn}{\mathscr{M}_k(\ell_1,\dots,\ell_m;n)=\Sigma_m\times\coprod_{(T_0,\dots,T_k)\in S_k(n,m), \; \ell_i=m_{f_i(T_0)}}\underbrace{\Sigma_n\times\dots\times\Sigma_n}_{k+1}.}
Obviously, by counting prongs, this is only non-empty when 
\beg{nl1lmm1}{n=\ell_1+\dots\ell_m-m+1.}
Again, the $\Sigma_n$'s come from the permutation parts of the $T_i$'s. As above, we have $(k+1)$ copies of $\Sigma_n$ because we have $(k+1)$ $T_j$'s
encoding permutations of ``prongs." The extra $\Sigma_m$ is so we have $n$ (and not $n-1$) $2$-permutations.
We have, for every $k$,
$$\mathscr{M}_k=\underbrace{\mathscr{M}_0\times\dots\times\mathscr{M}_0}_{k+1},$$
since the $k$ extra $\Sigma_m$'s on the right hand side corresponds to the $S_k(n,m)$
(instead of $S_0 (n,m)$) on the bottom of the coproduct in \rref{scrMkl1tomn}.
Let
$$\mathscr{M}:=\mathscr{M}_0.$$

Now, $\mathscr{M}$ is an $\N$-sorted operad (or multi-category, see \cite{EM}) and, therefore, so is $\mathscr{M}_n$. In addition,
we see that $M_k$ is actually the monad associated
with $\mathscr{M}_k$ (Note, again, that the extra $\Sigma_m$ in \rref{scrMkl1tomn} prevents extra identifications). $\mathscr{M}_0$-algebra are exactly operads, which are $\mathscr{A}_2$-algebras.

In fact, every $\mathscr{M}$-algebra is the sequence of spaces (i.e., space over $\N$) of an 
$\mathscr{A}_2$-algebra (i.e. operads). 
Thus, every $E\mathscr{M}$-algebra
is the underlying sequence of spaces of an $E\mathscr{A}_2$-algebra. 
We should explain that this last statement is not `if and only if.' The reason is that the $2$-permutations in $\mathscr{M}_k$ (or $M_k$) can
occur between the ``triangles" of different trees, $T_i$, $T_j$.
While $m_{T_i}=m_{T_j}=m$, we have no functorial correspondence between the ``prongs" of the trees $T_i$, $T_j$.
Therefore, if we allowed these more general $2$-permutations in $2$-actads, there would be no consistent way of defining $\circ^2$.

Now suppose we have a spectrum $E$ (for example in the sense of May, e.g., see \cite{MayEinfty, MayIterated, MaySpectra, MayMultiplicativeA}, i.e. a sequence of spaces $E_n$, $n\in \N_0$, with homeomorphisms
$$E_n\cong \Omega E_{n+1}$$
where $\Omega E_{n+1}=Map_{based} (S^1, E_{n+1})$ denotes the based loop space).
Suppose we have a morphism of spectra
$$E\r H\Z.$$
Then we have a map
\beg{E0varphiZ}{
\diagram
E_0\rto^\varphi&\Z,\\
\enddiagram
}
with $E_0(n)=\varphi^{-1}(n-1)$.
Now $E_0$ is an algebra over $\mathscr{C}_{\infty}$, the $\infty$-little cubes operad.
Now, let $\mathscr{C}$ be an $\N$-sorted operad with
$$\mathscr{C}(\ell_1,\dots,\ell_m,n)=\mathscr{C}_{\infty}(m)$$
when \rref{nl1lmm1} holds, and
$\mathscr{C}(\ell_1,\dots,\ell_m,n)=\emptyset$ else.
Then $(E_0(n))_{n\in\N}$ is a $\mathscr{C}$-algebra.

So, we have projections
$$\diagram
\mathscr{C}\times E\mathscr{M}\dto_{p_1}\rto^(0.6){p_2} &E\mathscr{M}\\
\mathscr{C}.& \\
\enddiagram
$$
Then let $N$ be the monad associated with $\mathscr{C}\times E\mathscr{M}$.
By \rref{scrMkl1tomn}, the symmetric group action on $\mathscr{M}$ is free, so $E\mathscr{M}$ is an $E_\infty$-operad
since the \v{C}ech resolution of a non-empty space is always (simplicially) contractible.
Then the $2$-sided bar construction of monads
$$B(EM,N, (E_0(n))_{n\in \N})\simeq (E_0(n))_{n\in\N}$$
(see \cite{MayIterated})
is an $E\mathscr{M}$-algebra. So it is an $E\mathscr{A}_2$-algebra.
The same construction also works for a map of $E_{\infty}$-spaces $X\r\Z$ in place of $E_0$.
(Recall that infinite loop space of spectra are group-like $E_\infty$-spaces.)
Thus, we have
\begin{proposition}
Suppose
$$X\r\Z$$
is a map of $E_{\infty}$-spaces. Then there exists an $E\mathscr{A}_2$-algebra $\Xi_X$ equivalent to $(X(n))_{n\in\N}$.
\end{proposition}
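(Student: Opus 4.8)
The plan is to produce an $E\mathscr{M}$-algebra that is levelwise weakly equivalent to the sequence $(X(n))_{n\in\N}$ and then to invoke the fact, recorded in the discussion preceding \rref{scrMkl1tomn}, that every $E\mathscr{M}$-algebra is the underlying sequence of spaces of an $E\mathscr{A}_2$-algebra; that $E\mathscr{A}_2$-algebra will be $\Xi_X$. This mirrors the construction given just above for $E_0$ when $E\r H\Z$ is a map of spectra, except that the input is now any map of $E_\infty$-spaces $\psi:X\r\Z$, and group-likeness of $X$ plays no role.

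First I would record the operadic input. Set $X(n)=\psi^{-1}(n-1)$; using only that $\mathscr{C}_\infty$ acts on $X$ compatibly with $\psi$, the discussion around \rref{nl1lmm1} makes $(X(n))_{n\in\N}$ into an algebra over the $\N$-sorted operad $\mathscr{C}$ with $\mathscr{C}(\ell_1,\dots,\ell_m;n)=\mathscr{C}_\infty(m)$ for $n=\ell_1+\dots+\ell_m-m+1$ and $\mathscr{C}(\ell_1,\dots,\ell_m;n)=\emptyset$ otherwise. Next form the product $\N$-sorted operad $\mathscr{C}\times E\mathscr{M}$ with its projections $p_1:\mathscr{C}\times E\mathscr{M}\r\mathscr{C}$ and $p_2:\mathscr{C}\times E\mathscr{M}\r E\mathscr{M}$, and let $N$, $EM$ be the associated monads on spaces over $\N$. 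Via $p_1$ the $\mathscr{C}$-algebra $(X(n))_{n\in\N}$ becomes an $N$-algebra; via $p_2$ there is a monad map $N\r EM$, which exhibits $EM$ as a (right) $N$-functor in the sense of \cite{MayIterated}. Hence the two-sided monadic bar construction $B(EM,N,(X(n))_{n\in\N})$ — the realization of $[q]\mapsto EM\,N^{q}(X(n))_{n\in\N}$ — is defined, and is naturally an $EM$-algebra, i.e.\ an $E\mathscr{M}$-algebra.

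Then I would identify $B(EM,N,(X(n))_{n\in\N})$ up to weak equivalence. The key point is that $p_2$ induces a weak equivalence $NZ\r EMZ$ for every $Z$ over $\N$: by \rref{scrMkl1tomn} the symmetric group actions on $\mathscr{M}$, hence on $E\mathscr{M}$, are free, while each $\mathscr{C}(\ell_1,\dots,\ell_m;n)=\mathscr{C}_\infty(m)$ is contractible, so at each spot the projection $(\mathscr{C}\times E\mathscr{M})(\ell_1,\dots,\ell_m;n)\r E\mathscr{M}(\ell_1,\dots,\ell_m;n)$ is an equivariant homotopy equivalence of free $\Sigma_m$-spaces, and this property survives passage to the associated monads (working in compactly generated spaces, so that realization preserves the relevant products and weak equivalences, cf.\ \cite{MayIterated, Steenrod}). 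Applying this with $Z=N^{q}(X(n))_{n\in\N}$ and realizing the resulting levelwise weak equivalence of (proper) simplicial spaces yields a weak equivalence $B(N,N,(X(n))_{n\in\N})\r B(EM,N,(X(n))_{n\in\N})$, and $B(N,N,(X(n))_{n\in\N})\simeq(X(n))_{n\in\N}$ by the standard bar-resolution argument of \cite{MayIterated}. Thus $B(EM,N,(X(n))_{n\in\N})$ is an $E\mathscr{M}$-algebra weakly equivalent to $(X(n))_{n\in\N}$, and we take $\Xi_X$ to be the $E\mathscr{A}_2$-algebra of which this $E\mathscr{M}$-algebra is the underlying sequence of spaces.

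The step I expect to be the main obstacle is not a single homotopical estimate but the bookkeeping behind the passage ``$E\mathscr{M}$-algebra $\Rightarrow$ $E\mathscr{A}_2$-algebra'': unlike the identification of $\mathscr{M}$-algebras with operads (hence with $\mathscr{A}_2$-algebras), this passage is genuinely one-directional, since, as flagged just before \rref{scrMkl1tomn}, the $2$-permutations occurring in $\mathscr{M}_k$ may move triangles between distinct trees $T_i,T_j$ whose prongs admit no functorial identification; one must therefore present it as an honest natural functor on algebra categories and check its compatibility with the bar construction. A secondary and more routine obstacle is verifying that the homotopical hypotheses used above — freeness of the $\Sigma$-actions, and the NDR/cofibrancy conditions ensuring the simplicial spaces are proper and that $B(C,C,-)\simeq(-)$ — carry over verbatim from the operadic setting of \cite{MayIterated} to the present multisorted, $n$-fold-categorical framework.
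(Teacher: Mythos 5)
Your proposal follows essentially the same route as the paper: make $(X(n))_{n\in\N}$ a $\mathscr{C}$-algebra via the $E_\infty$-structure, pass to the monad $N$ of $\mathscr{C}\times E\mathscr{M}$, form the two-sided bar construction $B(EM,N,(X(n))_{n\in\N})$, and use that every $E\mathscr{M}$-algebra underlies an $E\mathscr{A}_2$-algebra. The only difference is that you spell out the standard May-style equivalence $B(EM,N,-)\simeq B(N,N,-)\simeq(-)$ and flag the one-directionality of the passage to $E\mathscr{A}_2$-algebras, both of which the paper treats by citation and brief remark.
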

\qed

\vspace{5mm}

We will now prove that in some cases, these $E\mathscr{A}_2$-algebras are non-trivial as operads (see \rref{AntoEAn}) in the sense that an $E_{\infty}$-operad does not map into them.

Let $X$ be the free $E\mathscr{A}_1$-algebra on $*$, with
$$X(n)=B\Sigma_{n-1}$$
(which comes with a natural $\Sigma_n$-action).
Now, we will write
$$X_n=X(n+1)=B\Sigma_n=E\Sigma_{n}\times_{\Sigma_{n}}*,$$
so that, in particular, $X_0=X(1)$.

\begin{theorem}
There does not exist an $E_{\infty}$-operad $\mathscr{C}$ together with an operad morphism
\beg{thmscrCX}{\mathscr{C}\r \Xi_X.}
\end{theorem}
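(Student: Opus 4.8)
\Proofs
The plan is to argue by contradiction. Suppose $\mathscr{C}$ is an $E_\infty$-operad equipped with an operad morphism $f\colon\mathscr{C}\to\Xi_X$. By the Proposition above, $\Xi_X$ is equivalent, as an $E\mathscr{A}_2$-algebra, to the symmetric sequence $(X(n))_{n\in\N}$ with $X(n)\simeq B\Sigma_{n-1}$; so in each arity $n$ the map $f$ is a $\Sigma_n$-equivariant map $\mathscr{C}(n)\to\Xi_X(n)$, and since $\mathscr{C}(n)$ is a universal free $\Sigma_n$-space this datum is nothing but a section, up to homotopy, of the Borel fibration
$$q_n\colon (\Xi_X(n))_{h\Sigma_n}\longrightarrow B\Sigma_n .$$
Because $\Xi_X(n)\simeq B\Sigma_{n-1}$ is aspherical, so is $(\Xi_X(n))_{h\Sigma_n}$; the map $q_n$ is then the classifying map of a group extension
\beg{extensionXiproof}{1\longrightarrow \Sigma_{n-1}\longrightarrow G_n\longrightarrow \Sigma_n\longrightarrow 1, \qquad G_n:=\pi_1\big((\Xi_X(n))_{h\Sigma_n}\big),}
and a homotopy section of $q_n$ is precisely a group-theoretic splitting of \rref{extensionXiproof}. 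Moreover, the operad axioms force the chosen sections to be compatible with the homomorphisms $G_n\to G_{n'}$ coming from operadic composition (on the $\mathscr{C}$-side these induce, on Borel constructions, the standard wreath-product inclusions of symmetric groups). So the first reduction is: no compatible family of splittings of the extensions \rref{extensionXiproof} can exist.

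The second step is to compute the extensions \rref{extensionXiproof} explicitly, and this is where the combinatorics of binary trees comes in. Unwinding the construction $\Xi_X=B(E\mathscr{M},N,X)$ given above, the $\Sigma_n$-equivariant homotopy type of $\Xi_X(n)$ decomposes over the set $B_2(n-1)_{\text{binary}}$ of binary trees, in the same spirit as the computation of $A_{E\mathscr{A}_2}(X_2^S)$: the symmetric group $\Sigma_n$ acts through the $1$-permutations of the $n$ prongs of such a tree, while the fundamental-group data in arity $n$ is controlled by the node-permutation groups $2-Mor(\mathscr{B}_2(n-1)_{\text{binary}})$. Theorem \ref{Theorempresentation} provides an explicit presentation of the latter and, in particular, records how the $\Sigma_n$-action on prongs is twisted relative to the node-permutations. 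From this one reads off the extensions \rref{extensionXiproof}, and one checks — already among the small arities — that no system of splittings of them compatible with composition can exist. Conceptually this is the homotopical manifestation of the point made in the Introduction: here $X$ is the free $E_\infty$-space on a point, so it group-completes to $\Omega^\infty\mathbb{S}$, the map $X\to\Z$ is induced by the unit $\mathbb{S}\to H\Z$ of the sphere spectrum, and it is exactly the non-splitting of this map that obstructs the compatible trivialisation of the twist.

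To extract the contradiction I would work at the bottom of the tower. Since $\Xi_X(2)\simeq B\Sigma_1=*$, the arity-$2$ component of $f$ is forced; feeding it through the two partial compositions $\mathscr{C}(2)\times\mathscr{C}(2)\to\mathscr{C}(3)\xrightarrow{f(3)}\Xi_X(3)\simeq B\Sigma_2$, and once more into arity $4$, pins down $f$ on the small arities in terms of chosen splittings of \rref{extensionXiproof}. Comparing these choices with the presentation of Theorem \ref{Theorempresentation} then exhibits a relation among node-permutations that the putative splittings must violate — contradicting the existence of $f$. Hence no $E_\infty$-operad admits an operad morphism into $\Xi_X$, which (via \rref{AntoEAn}) shows $\Xi_X$ to be a genuinely new operad, as asserted.

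The hard part will be the second step: turning the generators-and-relations description of $2-Mor(\mathscr{B}_2(n-1)_{\text{binary}})$ from Theorem \ref{Theorempresentation} into the precise extension \rref{extensionXiproof} governing $(\Xi_X(n))_{h\Sigma_n}$, and verifying that operadic composition genuinely forces the splittings to be compatible, so that the incompatibly-splittable twist is a real obstruction. The remaining ingredients — the reduction to equivariant maps and sections, the bar-construction bookkeeping identifying $\Xi_X(n)$ with its $\Sigma_n$-action, and the final comparison with the presentation — should be routine.
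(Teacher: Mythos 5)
Your overall skeleton --- restrict the putative morphism \rref{thmscrCX} to small arities, use equivariance to convert it into group-theoretic data over symmetric groups, and derive a contradiction from compatibility with operadic composition --- is consistent with the paper's strategy, but the two steps that actually carry the proof are missing, and the substitute you propose for them does not work as stated. The extensions $1\r\Sigma_{n-1}\r G_n\r\Sigma_n\r 1$ you set up are not themselves the obstruction: since (as the paper notes) the $\Sigma_n$-action on $\Xi_X(n)$ extends to an $E\Sigma_n$-action, each Borel construction retracts onto $\Xi_X(n)$, so $G_n\cong \Sigma_{n-1}\times\Sigma_n$ and every individual extension splits. All of the content lies in the compatibility with composition, and to exploit it one needs a \emph{non-triviality} statement which your proposal never supplies: the paper shows, using the identification of $\Xi_X$ with the free $E_\infty$-space on a point and the Dyer--Lashof operation (formula \rref{dyerlashofgenthing}, $\alpha\otimes(e_k\otimes\alpha\otimes\alpha)\mapsto\alpha\cdot Q^k\alpha\neq 0$), that the map $B\Z/2\r \Xi_X(4)\simeq B\Sigma_3$ obtained from the $(13)(24)$-equivariant composition $\Xi_X(2)^{\times 3}\r\Xi_X(4)$ is injective on mod $2$ homology, hence injective on $\pi_1$. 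If that map were null, a compatible family of sections would exist and there would be no contradiction; so the sentence ``one checks --- already among the small arities --- that no system of splittings \dots can exist'' is precisely the theorem, and it is nowhere carried out.

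Moreover, the tool you point to, Theorem \ref{Theorempresentation}, is not the relevant input: it presents the $2$-morphism groups of binary trees and is used to prove Theorem \ref{BYalg} about $A_{E\mathscr{A}_2}(X_2^S)$-algebras, whereas the structure maps of $\Xi_X$ needed here come from the bar construction over $\mathscr{M}$ and the $E_\infty$-structure of $\coprod_m B\Sigma_m$, which is what makes the homological computation available. Finally, even granted the injectivity of $\Z/2\r\Sigma_3$, the closing step is a specific piece of elementary group theory that your proposal omits: the factorization through $E\Sigma_4\times_{\Sigma_4}\mathscr{C}(4)\simeq B\Sigma_4$ produces a homomorphism $h:\Sigma_4\r\Sigma_3$ sending $(13)(24)=(13)\cdot(24)$ to a product of two conjugate involutions, and a non-trivial involution of $\Sigma_3$ is never such a product. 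As written, your argument identifies the right place to look but does not yet prove anything.
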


\begin{proof}
Suppose such a $\mathscr{C}$ does exist.
Since $\Xi_X$ is an operad in spaces, we have a $\mathbb{Z}/2$-equivariant map
\beg{222-4}{\varsigma:\Xi_X(2)\times(\Xi_X(2)\times \Xi_X(2))\r \Xi_X(4),}
where on the right hand side,
$\Z/2\r\Sigma_4$ acts by sending the generator of $\Z/2$ to the permutation $(13)(24)$.
For $\Z/2$-spaces $Z$ and $T$, where $Z$ has a trivial $\Z/2$-action, we have an equivalence
$$Z\times_{\Z/2}(T\times E\Z/2)\simeq Z\times (E\Z/2\times_{\Z/2} T),$$
or equivalently
\beg{Z2actionequiv}{(Z\times E\mathbb{Z}/2) \times_{\mathbb{Z}/2} T \simeq Z \times (E\mathbb{Z}/2 \times_{\mathbb{Z}/2}T)}
By \rref{Z2actionequiv}, we have
$$\diagram
\varrho:(E\Z/2\times X_1)\times_{\Z/2}(X_1\times X_1)\rto^{\cong}&X_1\times(E\Z/2\times_{\Z/2}(X_1\times X_1)).\\
\enddiagram$$
We also have a map
$$\nu:E\Z/2\times_{\Z/2} X_3\r E\Z/2\times_{E\Z/2} X_3,$$
since, by construction, the $\Sigma_n$-action on $X(n)$ extends, for every $n$, to an $E\Sigma_n$-action.
Let
$$\mu=\nu\circ(E\Z/2\times\varsigma)\circ\varrho^{-1}.$$
So, we have the following diagram:

\vspace{3mm}

\begin{tikzcd}
(E\Z/2\times X_1)\times_{\Z/2}(X_1\times X_1)\arrow[d,"\varrho"]\arrow[r, "E\Z/2\times\varsigma"]& E\Z/2\times_{\Z/2} X_3\arrow[d,"\nu"]\\
X_1\times(E\Z/2\times_{\Z/2}(X_1\times X_1)) \arrow[rd,"\mu"] & E\Z/2\times_{E\Z/2}X_3 \arrow[d, "="]\\
& X_3
\end{tikzcd}

Additionally, by construction, the map $\mu$ comes from the $E_{\infty}$-algebra (concretely, $\mathscr{C}_{\infty}$-algebra) structure of $X$.
Now, take homology of $\mu$ with coefficients in $\Z/2$, i.e.
$$H_k(\mu): H_k(X_1\times(E\Z/2\times_{\Z/2}(X_1\times X_1)))\r H_k(X_3).$$
Now, let $\alpha$ be the generator of $H_0(X_1)=\Z/2$ and let $e_k$ be the generator of $H_k(B\Z/2)\cong\Z/2$.
Then we have
\beg{dyerlashofgenthing}{\alpha\otimes(e_k\otimes\alpha\otimes\alpha)\mapsto \alpha\cdot Q^k\alpha\neq 0.}
where $Q^k$ denotes the Dyer-Lashof operation (see \cite{MaySteenrod, LCM}).

\vspace{3mm}

Now, suppose we have a map of operads \rref{thmscrCX}. Then we have the commutative diagram
$$\diagram
\mathscr{C}(4)\rto\dto& X(4)\dto \\
E\Z/2\times_{\Z/2}\mathscr{C}(4)\rto&E\Z/2\times_{\Z/2} X(4)
\enddiagram
$$
commutes.
Then the diagram

\vspace{3mm}

\hspace{25mm}\begin{tikzcd}
\mathscr{C}(4)\arrow[r]\arrow[d]& X(4)\arrow[d]\\
E\Z/2\times_{\Z/2}\mathscr{C}(4) \arrow[rdd]\arrow[dd] \arrow[r]& E\Z/2\times_{\Z/2}X(4) \arrow[d]\arrow[lddd, controls={+(-3.5,-1) and +(5,0.8)}]\\
& E\Z/2\times_{E\Z/2}X(4)\arrow[d]\arrow[dd,bend left]\\
E\Sigma_4\times_{\Sigma_4}\mathscr{C}(4)\arrow[d]& X(4) \arrow[d]{d}{=}\\
E\Sigma_4\times_{\Sigma_4}X(4)\arrow[r] & E\Sigma_4\times_{E\Sigma_4}X(4)
\end{tikzcd}

\vspace{3mm}

commutes.
In particular, we have a diagram of the form

\vspace{3mm}

\hspace{30mm}\begin{tikzcd}
E\Z/2\times_{\Z/2}\mathscr{C}(4)\arrow[d]\arrow[r]& X(4)\\
E\Sigma_4\times_{\Sigma_4}\mathscr{C}(4), \arrow[ru] &
\end{tikzcd}

\vspace{3mm}

in other words,

\vspace{3mm}

\hspace{40mm}\begin{tikzcd}
B\Z/2 \arrow[d]\arrow[r]& B\Sigma_3\\
B\Sigma_4 \arrow[ru] &
\end{tikzcd}

\vspace{3mm}

commutes.
So, by taking $\pi_1$, we get a commutative diagram of groups

\vspace{3mm}

\hspace{40mm}\begin{tikzcd}
\Z/2 \arrow[d,"g"]\arrow[r,"f"]& \Sigma_3\\
\Sigma_4. \arrow[ru,"h"] &
\end{tikzcd}

\vspace{3mm}

By definition, $g$ takes the generator of $\Z/2$ to $\sigma=(13)(24)$, $f$ is an inclusion (by \rref{dyerlashofgenthing}), and $h$ is a group homomorphism.
Then $f=h\circ g$. We have that $\sigma=\sigma_1\circ\sigma_2$, where $\sigma_1=(13)$ and $\sigma_2=(24)$. Since $h$ is a group homomorphism, it 
takes $\sigma$ to $h(\sigma_1)\circ h(\sigma_2)$, and $h(\sigma_1)$, $h(\sigma_2)$ are conjugate in $\Sigma_3$, since $\sigma_1$, $\sigma_2$
are conjugate in $\Sigma_4$. Thus, $h(\sigma_1)$, $h(\sigma_2)$ are both non-trivial. However, a non-trivial involution in $\Sigma_3$ 
is not a composition of two non-trivial involutions.
Contradiction.
\end{proof}

Let me add a few words on the topological interpretation of $\Xi_X$-algebras where $\Xi_X$ is the operad associated with an $E_{\infty}$-space
$X=(X_n)_{n\geq 0}$. First, let us discuss the case when $X=E_0$ for a spectrum $E$ with a morphism of (connective) spectra
\beg{EHZ}{E\rightarrow H\mathbb{Z}.}
This gives a morphism of $E_{\infty}$-spaces
$\varphi:E_0\rightarrow \mathbb{Z}$, and as above, we adopt the convention
$$X(n)=X_{n-1}=\varphi^{-1}(n-1),$$
for $n\in\mathbb{Z}$.
In particular, $X_0$ is an $E_{\infty}$-space (over $\mathbb{Z}$)
which is the $0$-space of the fiber $\widetilde{E}$ of the morphism of spectra $\rref{EHZ}$.
While we set up our formalism in such a way that $0$ is not included in the indexing of an operad, completely analogously, one can also construct a
based operad $\bar{\Xi}_X$ indexed by $n\in\mathbb{N}_0$, with
$$\bar{\Xi}_X(0)\sim X_{-1}.$$
For $\Xi_X$-algebra $Y$, 
$Y\amalg X_{-1}$ is canonically an $\bar{\Xi}_X$-algebra.
Now we already mentioned the fact that when \rref{EHZ} splits, then $\bar{\Xi}_X$-algebras are equivalent to $E_{\infty}$-spaces
$Y$ together with $E_{\infty}$-maps
$$\widetilde{E}_0\rightarrow Y.$$
For a general morphism \rref{EHZ}, a splitting can be achieved by replacing $E$ with the homotopy pushout $\check{E}$ of the diagram
$$
\diagram
\widetilde{E}\dto\rto& E\\
E.& \\
\enddiagram
$$
Then, $X$ gets replaced with $\check{X}$ where for $n\in \mathbb{Z}$,
$\check{X}(n)=X=E_0$.
Then, a $\bar{\Xi}_{\check{X}}$-algebra $Y$ is equivalent to a morphism of $E_{\infty}$-algebra $X\rightarrow Y$.
However, if we denote the monads associated with the operads we considered by replacing the symbol $\Xi$ with $\Theta$,
then for a $\bar{\Xi}_X$-algebra $Y$,
\beg{CheckYdefn}{\check{Y}=B(\bar{\Theta}_{\check{X}},\bar{\Theta}_X,Y).}
gives a diagram of $E_{\infty}$-spaces
\beg{CheckYdiagr}{
\diagram
X\rto\drto& \check{Y}\dto^{\psi}\\
 & \mathbb{Z}.\\
\enddiagram
}
Moreover, we get a canonical equivalence
$$
\diagram
\psi^{-1}(-1)\rto^(0.6){\sim}& Y.\\
\enddiagram
$$
Thus, we see that $\bar{\Xi}_X$-algebras $Y$, where $\pi_0 Y$ is a group, are canonically identified, up to equivalence, with
fibers of $-1$ in diagrams of $E_{\infty}$-spaces of the form \rref{CheckYdiagr}.
Such spaces are, of course, homotopically equivalent to the fibers $Y'=\psi^{-1}(0)$, which are equivalent to $E_{\infty}$-morphisms
$$X_0\rightarrow Y'.$$
However, when \rref{EHZ} does not split, the homotopy equivalence $Y\simeq Y'$ is not canonical.

Finally, let us comment what happens when the $E_{\infty}$-space $X$ is not the $0$-space of a spectrum (i.e. is not group complete). Then
$\bar{\Xi}_X$ does not make sense as above, and as the $\Xi_X$-algebra $Y$ is necessarily unbased.
However, if $X^g$ is the infinite loop space associated with $X$ (i.e. its group completion) then applying $B(\Theta_{X^g},\Theta_X,?)$
in the above sense gets us into the above situation.
In particular, the $\pi_0$ of $Y_+=Y\amalg\{*\}$ is still canonically a monoid, and its group completion is canonically equivalent to $\psi^{-1}(-1)$
in the diagram \rref{CheckYdiagr}, where $X$ is replaced with $X^g$.

\vspace{5mm}

\section{Ordinals}\label{sord}

\vspace{5mm}

In this section, we will consider the plain non-unital $n$-base $B_n$ only.
Let $Ord$ denote the set of ordinals.
It is well known that ordinal numbers $0<\alpha<\varepsilon$ can be written uniquely as a sum
$$\alpha=\omega^{\alpha_1} +\dots+\omega^{\alpha_k},$$
for some $\alpha>\alpha_1\geq\dots\geq\alpha_k$
Now, by definition, every tree in $B_2$ can be written as
\beg{zgamilto1z0tol}{z=\gamma_{1,i_{\ell}}(\dots\gamma_{1,i_2}(\gamma_{1,i_1}(z_0,z_1),z_2)\dots,z_{\ell})}
with $i_1>\dots>i_\ell$, $z_0\in B_1=\N$, $z_1,\dots,z_{\ell}\in B_2$ (so all the $z_1,\dots,z_{\ell}$ are plugged directly
into $z_0$). So if we define inductively a function $\Phi_2: B_2\r Ord$
by $\Phi_2(n)=n$ for $n\in\N$,
$$\begin{array}{c}
\Phi_2(z)= (i_1-1) +\omega^{\Phi_2(z_1)} +(i_2-i_1-1) +\omega^{\Phi_2(z_2)}+\dots\\
+ (i_{\ell}-i_{\ell-1}-1) +\omega^{\Phi_2(z_{\ell})}+(z_0-i_{\ell}),\\
\end{array}
$$ 
then $\Phi_2$ is onto $\varepsilon$. We could identify a subset on $B_2$ on which $\Phi_2$ is bijective, but it will not be bijective on all of $B_2$ because of order of summation, and also non-cancellation, for example $1+\omega =\omega$.

\vspace{3mm}

It is therefore natural to investigate how this fact generalizes to $n>2$.
We will focus on the surjectivity part here. For $n>2$, we will introduce more arguments to the function
$$\Phi_n: B_n \rightarrow Ord,$$
for the purpose of induction.

To be more precise,
let $\varphi_\alpha :Ord\r Ord$ be the functions with
$$\varphi_1(\beta)=\omega^{1+\beta},$$
and $\varphi_\alpha(\beta)=$ the $\beta$'th fixed point of all $\varphi_{\alpha'}$'s where $\alpha' < \alpha +1$.
The $\varphi_\alpha$'s are the Veblen functions with the subscripts shifted by $1$, for $\alpha\geq 2$.
For every $\alpha\in Ord$, $\varphi_\alpha$ is continuous and strictly increasing.

We will now define functions
$$\Phi_n: B_n \rightarrow Ord$$
inductively.

For $x\in B_1$, $\alpha_1,\dots,\alpha_{m_x}\in Ord$ , define
$$\Phi_1(x;\alpha_1,\dots,\alpha_{m_x})=\alpha_1 +\dots +\alpha_{m_x}.$$

Now, suppose we have defined $\Phi_{n-1}(z;\alpha_1,\dots,\alpha_{m_z})$, for every $z\in B_{n-1},\; \alpha_1,\dots,\alpha_{m_z}\in Ord$.
Fix $z\in B_n,\; \alpha_1,\dots,\alpha_{m_z}\in Ord$. We can always write
$$z=((z_0,z_1,\dots,z_{\ell}),(i_1,\dots,i_{\ell}))=$$ 
$$=\gamma_{n-1,i_{\ell}}(\gamma_{n-1,i_{\ell-1}}(\dots \gamma_{n-1,i_2}(\gamma_{n-1,i_1}(z_0,z_1),z_2),\dots),z_{\ell})$$
with $i_1>\dots>i_\ell$, $z_0\in B_{n-1}$, and $z_1,\dots,z_{\ell}\in B_n$.
Let
$$\beta_{i,j}=\alpha_{1+m_{z_1}+\dots+m_{z_{i-1+j}}}.$$
Define inductively
$$\begin{array}{c}
\Phi_n(z;\alpha_1,\dots,\alpha_{m_z}):=\\
\Phi_{n-1}(z_0;\underbrace{1,\dots,1}_{m_{z_1}-1},\varphi_{n-1}(\Phi_n(z_1;\beta_{1,1},\dots,\beta_{1,m_{z_1}})),\underbrace{1,\dots,1}_{m_{z_2}-1},\\
\varphi_{n-1}(\Phi_n(z_2;\beta_{2,1},\dots,\beta_{2,m_{z_2}})), \underbrace{1,\dots,1}_{m_{z_3}-1}, \dots,\\
\underbrace{1,\dots,1}_{m_{z_{\ell}}-1},\varphi_{n-1}(\Phi_n(z_{\ell};\beta_{\ell,1},\dots,\beta_{\ell,m_{z_{\ell}}}))).\\
\end{array}$$
Note that not all of the arguments get used in the definition.
However, I  do not think that eliminating this ``wastefulness" would extend the
range of ordinals expressible by $B_n$.

\begin{theorem}\label{phisontoordinalwithalphain1orimage}
Fix an $n$. Then for every $\beta\in Ord$, there exists some $z\in B_n$ and some $\alpha_i\in \{1\}\cup Im(\varphi_n)$ such that
$$\beta=\Phi_n(z;\alpha_1,\dots,\alpha_{m_z}).$$
\end{theorem}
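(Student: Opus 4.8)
The plan is to prove the statement by induction on $n$, with the inductive step itself carried out by transfinite induction on $\beta$. The base case $n=1$ is just the Cantor normal form: writing $\beta=\omega^{\beta_1}+\dots+\omega^{\beta_k}$ with $\beta_1\geq\dots\geq\beta_k$, each summand $\omega^{\beta_j}$ is either $1$ (when $\beta_j=0$) or lies in $Im(\varphi_1)$ (when $\beta_j\geq 1$, since then $\omega^{\beta_j}=\varphi_1(\delta_j)$ for the $\delta_j$ with $1+\delta_j=\beta_j$). Taking $z=k\in B_1=\mathbb{N}$ and $\alpha_j=\omega^{\beta_j}$ gives $\Phi_1(z;\alpha_1,\dots,\alpha_k)=\beta$; the value $\beta=0$ is vacuous.

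For the inductive step, assume the statement for $n-1$. Let $V_n$ denote the set of ordinals of the form $\Phi_n(z;\alpha_1,\dots,\alpha_{m_z})$ with $z\in B_n$ and all $\alpha_i\in\{1\}\cup Im(\varphi_n)$. Two closure facts about $V_n$ drive the argument. First, $\{1\}\cup Im(\varphi_n)\subseteq V_n$, realized by the one-entry tree (the image of a point under the iterated inclusions into $B_n$), for which $\Phi_n(z;\alpha_1)=\alpha_1$. Second, unwinding the recursive definition of $\Phi_n$ down through $\Phi_{n-1},\dots,\Phi_1$ shows that $\Phi_n(z;\alpha_1,\dots,\alpha_{m_z})$ is obtained from the $\alpha_i$ and finitely many copies of $1$ by ordinal additions and applications of $\varphi_1,\dots,\varphi_{n-1}$; consequently $V_n$ is closed under addition — splice two realizing trees together at the innermost $\Phi_1$-layer — and under each $\varphi_m$ with $m\leq n-1$ — prefix a ``trivial tower'' head of height $n-m$ to a realizing tree. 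I will also use the standard properties of the Veblen functions (in the indexing of this section): each $\varphi_m$ is continuous, strictly increasing and inflationary; every $\varphi_m(\delta)$ is additively principal; and, crucially, $Im(\varphi_n)=Fix(\varphi_{n-1})$ for $n\geq 2$, where $Fix$ denotes the set of fixed points.

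Now fix $\beta$ and assume every smaller ordinal lies in $V_n$. If $\beta\in\{1\}\cup Im(\varphi_n)$, we are done by the first closure fact. Otherwise, apply the hypothesis for $n-1$ to write $\beta=\Phi_{n-1}(z_0;\gamma_1,\dots,\gamma_{m_{z_0}})$ with each $\gamma_i\in\{1\}\cup Im(\varphi_{n-1})$; a routine monotonicity sub-induction (using inflationarity of the $\varphi_m$ and that $\Phi_1$ is a sum) shows $\gamma_i\leq\beta$ for every $i$. For each $i$ with $\gamma_i\in Im(\varphi_{n-1})\setminus\{1\}$, let $\delta_i$ be the unique ordinal with $\varphi_{n-1}(\delta_i)=\gamma_i$. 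If $\gamma_i\in Fix(\varphi_{n-1})=Im(\varphi_n)$, then already $\gamma_i\in V_n$. Otherwise $\delta_i<\gamma_i\leq\beta$, so $\delta_i\in V_n$ by the transfinite hypothesis, whence $\gamma_i=\varphi_{n-1}(\delta_i)\in V_n$ by closure under $\varphi_{n-1}$. In every case each $\gamma_i$, and trivially $1$, lies in $V_n$; since $\beta$ is built from the $\gamma_i$ and copies of $1$ by additions and applications of $\varphi_1,\dots,\varphi_{n-2}$, the closure facts give $\beta\in V_n$. This closes both inductions, the recursion being well-founded because the transfinite hypothesis is invoked only at the strictly smaller $\delta_i$ and the outer hypothesis only at $n-1$.

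The ordinal-theoretic content above is light — essentially the Cantor/Veblen normal-form hierarchy plus the identity $Im(\varphi_n)=Fix(\varphi_{n-1})$, which is exactly what lets a fixed-point obstruction ($\delta_i$ failing to drop below $\beta$) be absorbed as an allowed argument. I expect the real work, and the main obstacle, to lie in making the two closure properties of $V_n$ precise inside the free plain $(n-1)$-actad $B_n$ of Lemma \ref{Lemmaplain}: combining two realizing trees (for closure under $+$) and inserting a trivial head (for closure under $\varphi_m$) must respect the sort-compatibility conditions cutting out $B_n$, and one must verify that the block-of-ones padding dictated by the formula for $\Phi_n$ can always be matched. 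The clean way to handle this is to strengthen the transfinite hypothesis so that it also prescribes the total arity $G_{n-1}$ of the tree it produces, so that the subtree realizing $\delta_i$ fits, on the nose, into the $i$-th entry of $z_0$; this is also where one must be careful about the case $\delta_i=0$ (i.e.\ $\gamma_i=\varphi_{n-1}(0)$), which forces the additive unit to appear and thus relies on the minimal/zero-arity generators of the base.
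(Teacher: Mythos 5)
Your proposal is correct and takes essentially the same route as the paper's proof: induction on $n$ starting from Cantor normal form, representing $\beta$ via the level-$(n-1)$ statement, and resolving each non-unit argument $\gamma_i=\varphi_{n-1}(\delta_i)$ by the dichotomy between $\gamma_i$ being a fixed point of $\varphi_{n-1}$ (hence lying in $Im(\varphi_n)$ and thus an allowed argument) and $\delta_i<\gamma_i\leq\beta$ permitting a recursive descent. Your write-up merely makes explicit the inner transfinite induction on $\beta$ and the closure and reassembly properties of the set of realizable ordinals, which the paper's terse proof leaves implicit.
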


\begin{proof}
Induction.
Suppose n=1.
Then $$\Phi_1(n;\alpha_1,\dots,\alpha_n)=\alpha_1+\dots +\alpha_n.$$
Then for every $\beta\in Ord$, there exists an $n$ and $\alpha_1,\dots,\alpha_n\in\{1\}\cup\{\omega^\alpha|\alpha\in Ord\} =\{1\}\cup Im(\varphi_1)$ such that $\beta=\alpha_1+\dots+\alpha_n$.

Now, suppose for every $\beta\in Ord$ there is a $z\in B_{n-1}$ and $\alpha_1,\dots,\alpha_{m_z}\in \{1\}\cup Im(\varphi_{n-1})$ with
$$\beta =\Phi_{n-1}(z;\alpha_1,\dots,\alpha_{m_z}).$$
If $\alpha_1,\dots,\alpha_{m_z}=1$, we are done.
Suppose $\alpha_i=\varphi_{n-1}(\alpha_i')$. If $\alpha_i=\alpha_i'$, then $\alpha_i$ is a fixed point of $\varphi_{n-1}$. So there exists an $\alpha_i''$ with
$$\alpha_i=\varphi_n(\alpha_i'').$$
So we are done.
Suppose $\alpha_i'<\alpha_i$. Then we use the induction hypothesis on $\alpha_i'$.
\end{proof}

\vspace{5mm}

By Theorem \ref{phisontoordinalwithalphain1orimage}, the image of the function
$$\Phi_n :B_n\r Ord$$
given by $\Phi_n(x):=\Phi_n(x;1,\dots,1)$ is $\varphi_n(0)$, for $n=1,2,\dots$.

\begin{comment}
It is readily possible to extend the description of $B_n$ (and, in addition, $\mathscr{B}_n$) to $B_{\alpha}$, ( and $\mathscr{B}_\alpha$) for $\alpha\in Ord$,
by taking unions for limit ordinals $\alpha$. Then also our definition of $\Phi_n$ extends to $\Phi_\alpha$, $\alpha\in Ord$.
Therefore, the first ordinal $\gamma$ for which
$$Im(\Phi_{\gamma})=\gamma$$
is the Feferman-Sch\"{u}tte ordinal $\gamma=\Gamma_0$.
This can be compared with \cite{KockJoyalBatMascari}, where the authors consider the similar concept of opetopes and
possible machine implementations.
\end{comment}

\vspace{5mm}

\end{document}